\newcounter{draft}
\ifnum\value{draft}>0
\newcommand{\comment}[1]{#1}
\newcommand{\comment}[1]{}
\algnewcommand{\LeftComment}[1]{\Statex \(\triangleright\) #1} 
\theoremstyle{plain}
\newtheorem{theorem}{Theorem}[section]
\newtheorem{lemma}[theorem]{Lemma}
\newtheorem{proposition}[theorem]{Proposition}
\newtheorem{corollary}[theorem]{Corollary}
\theoremstyle{remark}
\newtheorem{remark}[theorem]{Remark}
\theoremstyle{definition}
\newcommand{\bbR}{\mathbb{R}}
\newcommand{\bbN}{\mathbb{N}}
\newcommand{\bbQ}{\mathbb{Q}}
\newcommand{\cP}{\mathcal{P}}
\newcommand{\cF}{\mathcal{F}}
\newcommand{\ot}{\mathsf{ot}}
\newcommand{\tameF}{\cF'_n}
\newcommand{\Succ}{\textsc{Succ}}
\newcommand{\BuiltSucc}{\textsc{BuiltSucc}}
\newcommand{\Pred}{\textsc{Pred}}
\newcommand{\WeakPred}{\textsc{WeakPred}}
\newcommand{\closure}{\mathsf{cl}}
\newcommand{\eps}{\varepsilon}
\begin{document}
\title{Generalized fusible numbers and their ordinals}
\author{Alexander~I.~Bufetov}
\address{ CNRS, Aix-Marseille Universit{\'e}, Centrale Marseille,  Institut de Math{\'e}matiques de Marseille, UMR7373,  39 Rue F. Joliot Curie 13453, Marseille, France;\newline
  Steklov  Mathematical Institute of RAS, Moscow, Russia;\newline
  Institute for Information Transmission Problems, Moscow, Russia.  }
  \email{alexander.bufetov@univ-amu.fr, bufetov@mi-ras.ru}

\author{Gabriel~Nivasch}
\address{Ariel University, Ariel, Israel.}
\email{ gabrieln@ariel.ac.il}

\author{Fedor~Pakhomov}
\address{
Ghent University, Ghent, Belgium;\newline
  Steklov  Mathematical Institute of RAS, Moscow, Russia.}

  \email{fedor.pakhomov@ugent.be, pakhfn@mi-ras.ru}

\maketitle

\begin{abstract}
  Erickson defined the \emph{fusible numbers} as a set $\cF$ of reals generated by repeated application of the function $\frac{x+y+1}{2}$. Erickson, Nivasch, and Xu showed that $\cF$ is well ordered, with order type $\eps_0$. They also investigated a recursively defined function $M\colon \mathbb{R}\to\mathbb{R}$. They showed that the set of points of discontinuity of $M$ is a subset of $\cF$ of order type $\eps_0$. They also showed that, although $M$ is a total function on $\bbR$, the fact that the restriction of $M$ to $\mathbb{Q}$ is total is not provable in first-order Peano arithmetic $\mathsf{PA}$.

In this paper we explore the problem (raised by Friedman) of whether similar approaches can yield well-ordered sets $\cF$ of larger order types. As Friedman pointed out, Kruskal's tree theorem yields an upper bound of the small Veblen ordinal for the order type of any set generated in a similar way by repeated application of a monotone function $g:\bbR^n\to\bbR$.

 The most straightforward generalization of $\frac{x+y+1}{2}$ to an $n$-ary function is the function $\frac{x_1+\cdots+x_n+1}{n}$. We show that this function generates a set $\cF_n$ whose order type is just $\varphi_{n-1}(0)$. For this, we develop recursively defined functions $M_n\colon \mathbb{R}\to\mathbb{R}$ naturally generalizing the function $M$.

Furthermore, we prove that for any \emph{linear} function $g:\bbR^n\to\bbR$, the order type of the resulting $\cF$ is at most $\varphi_{n-1}(0)$.

Finally, we show that there do exist continuous functions $g:\bbR^n\to\bbR$ for which the order types of the resulting sets $\cF$ approach the small Veblen ordinal.
\end{abstract}

\section{Introduction}\label{sec_intro}

Jeff Erickson \cite{Eri} defined the set $\cF\subset\bbR$ of \emph{fusible numbers} the following way: Let $g:\bbR^2\to\bbR$ be given by $g(x,y) = (x+y+1)/2$. Then let $\cF\subset\bbR$ be the inclusion-least set that satisfies $0\in\cF$ and $g(x,y)\in\cF$ whenever $x,y\in\cF$ and $g(x,y)>\max{\{x,y\}}$. In other words, $\cF$ is constructing iteratively by initially inserting $0$ into $\cF$, and inserting $g(x,y)$ into $\cF$ whenever $x$ and $y$ are previously constructed elements smaller than $g(x,y)$. Erickson claimed in presentation \cite{Eri} that  $\cF$ is well-ordered with order type $\eps_0$. Although, as pointed by Junyan Xu \cite{Xu12}, Erickson's sketch was faulty, the claim itself is correct: The fact that $\cF$ is a well-order of type $\ge \eps_0$ was proved by Xu \cite{Xu12}, and the fact that the order type of $\cF$ is exactly $\eps_0$ was proved by Erickson, Nivasch, and Xu \cite{ENX20}.

Also, the following recursive algorithm defining a function $M\colon \mathbb{R}\to\mathbb{R}$ goes back to Erickson's presentation \cite{Eri}: $$M(x)=\begin{cases} -x, & \text{if $x<0$}\\ \frac{M(x-M(x-1))}{2}, &\text{otherwise.}\end{cases}$$ The function $M$ was further researched in \cite{Xu12,ENX20}. It has been established that $M$ terminates on all real inputs, and that the set $\cF'=\{x+M(x)\mid x\in\mathbb{R}\}$ coincides with the set of all points of discontinuity of $M$ and is a subset of $\cF$ with order type $\eps_0$. Finally, in \cite{ENX20} it was established that although $M$ is a total function on $\mathbb{R}$, its restriction to  $\mathbb{Q}$ is a total computable function whose totality cannot be proved in the first-order Peano arithmetic $\mathsf{PA}$. 

Friedman \cite{Fri20} then pointed out the following: Let $G=\{g_1,\ldots,g_k\}$ be any finite set of monotone functions of the form $g_i:\bbR^{n_i}\to\bbR$ (meaning $g(x_1,\ldots,x_{n_i})\le g(y_1,\ldots,y_{n_i})$ whenever $x_j\le y_j$ for every $1\le j\le n_i$), and let $P\subset \bbR$ be any finite initial set. Let $\cF=\cF(G,P)\subset\bbR$ be the inclusion-least set that satisfies $P\subseteq\cF$ and $g_i(x_1,\ldots,x_{n_i})\in\cF$ whenever $x_1,\ldots,x_{n_i}\in\cF$ with $g_i(x_1,\ldots, x_{n_i})>\max{\{x_1,\ldots,x_{n_i}\}}$. Then Kruskal's tree theorem \cite{kruskal} implies that $\cF$ is well ordered. Hence, the order type of $\cF$ can be upper-bounded using known bounds on the order types of linearizations of certain well partial orders of trees \cite{RW93,Sch79,Sch20}. For example, if $G=\{g\}$ has a single function, and $g$ is $n$-ary, $n\ge 3$, then the order type of the resulting set $\cF$ is at most $\varphi(1,\underbrace{0,\ldots,0}\limits_{\mbox{\footnotesize \textrm{$n$ times}}})$, where $\varphi$ is the $(n+1)$-ary Veblen function.  The limit of $\varphi(1,\underbrace{0,\ldots,0}\limits_{\mbox{\footnotesize \textrm{$n$ times}}})$ as $n\to\infty$ is known as the \emph{small Veblen ordinal}. Hence, the question arises whether there exist natural functions $g$ for which the resulting sets $\cF(\{g\},\{0\})$ have order types approaching the small Veblen ordinal. There is one particularly well-known ordinal $\Gamma_0=\varphi(1,0,0)$  that lies  between $\varepsilon_0$ and the small Veblen ordinal (it is also called Feferman-Schütte ordinal and is known to be the proof theoretic ordinal of the system $\mathrm{ATR}_0$ of second-order arithmetic \cite{FMS82}), so in particular it would be interesting to find a natural function $g$ on reals giving rise to this ordinal. 

Perhaps the most straightforward generalization of the function $g_2(x,y)=(x+y+1)/2$ are the functions $g_n(x_1, \ldots, x_n) = (x_1 + \cdots + x_n+1)/n$. We note that the function $g_2$ and set $\cF=\cF_2=\cF(\{g_2\},\{0\})$ has a natural motivation in terms of fuses that goes back to Erickson \cite{Eri}. If $x,y$, $|x-y|<1$ are moments in time measured in minutes, and we are given a fuse that burns off completely in exactly one minute if ignited from one end, then $g_2(x,y)$ is the moment of time when the fuse burns off if it is ignited from one end at moment $x$ and from the other end at moment $y$. Then $\cF_2$ is the set of time durations that could be measured by a procedure involving only ignition of such fuses. This motivation for $g_2$ has a natural analogue for the case of other functions $g_n$. Suppose we have a finite collection of water tanks, where each water tank has $n$ faucets. Each faucet, on its own, can empty the water tank in $1$ hour. A faucet can only be opened at time $0$ or at the moment another tank has become completely empty. We are interested in the set of times $t$ for which there is a way to make a tank empty precisely at time $t$ according to these rules. Since a tank whose faucets are opened at times $x_1, \ldots, x_k$ becomes empty at time $(x_1 + \cdots x_k + 1)/k$, the corresponding set is $\cF_{\le n}=\cF(\{g_1, \ldots, g_n\},\{0\})$. If we require that all of a tank's faucets must be opened before the tank empties, then the set we obtain is $\cF_n=\cF(\{g_n\},\{0\})$. Thus we have natural questions to determine the order types of these sets $\cF_n$ and $\cF_{\le n}$. We call the elements of $\cF_n$ \emph{$n$-fusible numbers}.



\subsection{Our results}

In this paper we prove that the order type of the above-mentioned sets $\cF_n$ and $\cF_{\le n}$ is $\varphi_{n-1}(0)$. The lower bound is achieved by considering  variants of the algorithm $M$:

\begin{theorem}\label{thm_M}
Let $n\ge 2$ be fixed, and consider the algorithm $M_n$ given by:
\begin{equation*}
    M_n(x) = \begin{cases} -x, &\text{if $x<0$};\\ \underbrace{M_n(x-M_n(x-M_n(\cdots M_n(x-1)\cdots)))}_\text{$n$ instances of $M_n$}/n, &\text{otherwise.}\end{cases}
\end{equation*}
Then $M_n$ terminates on all real inputs. Furthermore, $\cF'_n=\{x+M_n(x)\mid x\in \mathbb{R}\}$ is a subset of $\cF_n$ with order type $\varphi_{n-1}(0)$.
\end{theorem}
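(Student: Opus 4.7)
The plan is to adapt the method used for $n=2$ in \cite{ENX20} to the higher levels of the Veblen hierarchy. I would organize the argument into three interlocking stages: termination of $M_n$, identification of $\cF'_n$ as a subset of $\cF_n$, and the order-type computation.

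First, I would establish termination of $M_n$ together with a stratification of $\bbR$ by an ordinal-valued rank. Unfolding the recursive clause, computing $M_n(x)$ requires $n$ nested evaluations: setting $y_0 = x-1$ and $y_{k+1} = x - M_n(y_k)$, one has $M_n(x) = M_n(y_{n-1})/n$. The key property, to be proved by induction, is that each $y_k$ lies strictly below $x$, so the recursion is descending in a controlled way. Because $\bbR$ is not well-ordered, the induction must be carried out on an ordinal-valued rank $\rho\colon \bbR\to\On$ rather than on $x$ itself. Concretely, I would show that between consecutive discontinuities $M_n$ is affine of slope $-1$, so the graph of $M_n$ on $\{x\in\bbR : x\ge 0\}$ decomposes into a descending staircase of left-closed right-open intervals which can be indexed by ordinals in their natural order; $\rho(x)$ is simply the ordinal index of the interval containing $x$, and the termination proof reduces to showing $\rho(y_k)<\rho(x)$ for each $k<n$.

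Second, I would identify $\cF'_n$ with the set of discontinuity points of $M_n$ and show $\cF'_n\subseteq\cF_n$. The identification is immediate from the staircase picture: the right endpoint of the interval containing $x$ is precisely $x+M_n(x)$. For the inclusion, I would unwind the recursive description at a point $\alpha=x+M_n(x)\in\cF'_n$ to express $\alpha=g_n(z_1,\ldots,z_n)$, where each $z_i$ is a discontinuity at strictly lower rank, hence already in $\cF_n$ by inductive hypothesis.

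Third, for the order type, the upper bound $\ot(\cF'_n)\le\varphi_{n-1}(0)$ would follow from the rank analysis: each recursive evaluation of $M_n$ corresponds to a Veblen-style fixed-point step, and the $n$ nested layers realize exactly the $(n-1)$-st level of the Veblen hierarchy above the base. For the matching lower bound, I would construct, by transfinite recursion on $\beta<\varphi_{n-1}(0)$, inputs $x_\beta$ with $\rho(x_\beta)=\beta$, exploiting that $\varphi_{n-1}(0)$ is the least common fixed point of $\varphi_0,\ldots,\varphi_{n-2}$ and that the nested recursion inside $M_n$ produces precisely such fixed points. The principal obstacle, and the source of the bulk of the technical work, is the joint induction simultaneously establishing $\rho(y_k)<\rho(x)$ and the preservation of the staircase description under the recursion; already for $n=2$ this is delicate, and in the general case one must carefully track how each additional layer of nesting ascends one level of the Veblen hierarchy.
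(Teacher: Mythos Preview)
Your plan has the right overall shape---termination, inclusion $\cF'_n\subseteq\cF_n$, then order type---but two of the three stages have real problems.

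\textbf{Termination.} Your proposed rank $\rho$ is the ordinal index of the staircase interval containing $x$, and you want to prove termination by showing $\rho(y_k)<\rho(x)$. But the staircase, and hence $\rho$, is only defined on points where $M_n$ already terminates; you cannot assign $\rho(x)$ to a point whose termination is in doubt, so the induction you describe is circular. The paper avoids this entirely: it first proves the slope-$(-1)$ property (your ``staircase description'') by induction on \emph{recursion depth}, assuming only that $M(x)$ terminates, and then derives global termination by a one-line infimum argument (if $x=\inf\{\text{nonterminating points}\}$ then all recursive subcalls lie strictly below $x$, so $M(x)$ terminates, and the local linearity pushes termination past $x$). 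No ordinals are needed for this step. Also note that your proposed inequality $\rho(y_k)<\rho(x)$ is not what actually holds: one needs instead $t_i(x)\ge iM(x)$ (equivalently $y_k\le x-M(x)$ fails in general; the correct inequality is more delicate and is the content of the paper's Lemma~6.1).

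\textbf{Upper bound on the order type.} You say this ``would follow from the rank analysis,'' but you give no mechanism, and it is not clear there is one internal to $M_n$. The paper obtains the upper bound from a completely separate theorem (Theorem~\ref{thm_upper}) bounding $\ot(\cF(\{g\},P))$ for \emph{any} linear $g$ via wpo machinery (Higman's lemma, natural sums and products, and an inductive argument over the number of ``free'' arguments of $g$). That argument never looks at $M_n$; it bounds $\cF_n$ itself, and the bound on $\cF'_n$ follows from $\cF'_n\subseteq\cF_n$.

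\textbf{Lower bound.} Your idea of producing inputs $x_\beta$ by transfinite recursion is in the right spirit, but the paper's execution is rather different and more structural: it exhibits explicit affine maps $l_i^x$ taking an interval $J_i(x)$ onto $[x,x+t_i(x)/i)$ with the property $t_{i+1}(l_i^x(y))=M(y)$, and uses these to prove by downward induction on $i$ that $o_{\mathit{lim}}(x+t_i(x)/i)$ jumps by one Veblen level at each step. You would need to discover some analogue of these maps to make your construction go through.
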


We prove that $\varphi_{n-1}(0)$ is not only a lower bound for the order type of $\cF_n$, but also an upper bound. Furthermore $\varphi_{n-1}(0)$ is an upper bound on the order type of any set generated by one linear $n$-ary function.

\begin{theorem}\label{thm_upper}
Let $g:\bbR^n\to \bbR$ be a monotone linear function, and let $P\subset\bbR$ be finite. Then the order type of $\cF(\{g\},P)$ is at most $\varphi_{n-1}(0)$.
\end{theorem}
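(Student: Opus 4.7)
My approach is to reduce to a canonical critical form and then perform an ordinal assignment on elements of $\cF(\{g\}, P)$ via combinatorial analysis of derivation trees. A monotone linear function has the form $g(x_1, \ldots, x_n) = \sum_{i=1}^n a_i x_i + c$ with $a_i \geq 0$. First I would dispose of degenerate cases: when some coefficient $a_i$ vanishes, the effective arity drops and I apply induction on $n$; when $\sum a_i \neq 1$, the dynamics of $g$ are either contractive (order type at most $\omega$, bounded above by the fixed point $c/(1-\sum a_i)$, assuming $c > 0$, else no new element is produced) or uniformly expansive (again order type at most $\omega$, by a direct geometric-growth argument on successive applications of $g$). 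So the interesting case is the critical one, where $\sum a_i = 1$, $c > 0$, and all $a_i > 0$; after rescaling by $c$, we take $c = 1$, so $g(x_1, \ldots, x_n) = \sum a_i x_i + 1$.

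In the critical case, I would adapt the recursion used to define $M_n$ to the weighted setting: define a function $M_g$ that nests $n$ calls to itself along the pattern of the definition of $M_n$, but with the weights $a_i$ in the roles formerly played by $1/n$. The plan is then to prove, by self-referential induction on $x$ using a lexicographic measure on the tower of inner arguments, that $M_g$ terminates on every real input, and that $\{x + M_g(x) : x \in \bbR\}$ lies in $\cF(\{g\},P)$ and constitutes a cofinal subset. Assigning to each $x \in \cF(\{g\}, P)$ the ordinal rank $\rho(x)$ given by the height of the computation tree of $M_g(x - \eps)$ for sufficiently small $\eps > 0$ yields a strictly order-preserving map from $\cF(\{g\}, P)$ into an ordinal $\leq \varphi_{n-1}(0)$. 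The $n$-fold nesting in the recursion translates into exactly $n-1$ Veblen layers in the ordinal rank bound, matching the analysis carried out for Theorem \ref{thm_M}.

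The main obstacle lies in the asymmetry introduced by the non-uniform weights $a_i$. In the symmetric case of $g_n$, termination and the ordinal bound exploit the interchangeability of the children of each internal node, which simplifies the combinatorics; with weighted coefficients, one must track the contribution of each coefficient separately. I expect that the assumption $\sum a_i = 1$ (rather than uniformity per se) is what makes the argument go through, since this is precisely what forces the inner arguments of the recursion to decrease in a lexicographic sense. A secondary challenge is handling possibly irrational weights: the termination proof and the Veblen-hierarchy unfolding must be formulated purely in terms of the combinatorial shape of the recursion, independent of any specific numerical features of the $a_i$. Once this coefficient-free formulation is established, the bound $\varphi_{n-1}(0)$ follows by the same Veblen-hierarchy unfolding as in the symmetric case, because the ordinal bound depends only on the arity $n$ of $g$, not on the values of the $a_i$.
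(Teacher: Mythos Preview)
Your degenerate-case analysis already fails. The claim that $\sum a_i>1$ forces order type at most $\omega$ is false: take $g(x,y,z)=2x+0.1y+0.1z+1$ and $P=\{0\}$. Here $\sum a_i=2.2>1$, yet fixing $x=0$ gives the contractive map $y\mapsto g(0,y,y)=0.2y+1$, so iterating from $0$ produces $1,1.2,1.24,\ldots$ accumulating at $5/4$, while $g(1,0,0)=3$ lies strictly above. Hence $\ot(\cF)>\omega$. The expansive/contractive dichotomy for the full sum $\sum a_i$ does not control the dynamics of lower-arity restrictions of $g$, and it is precisely those restrictions that create accumulation.

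More seriously, the core strategy conflates the roles of lower and upper bounds. An $M_g$-style recursion produces a particular subset $\cF'_g=\{x+M_g(x)\}$ of $\cF$; in the paper this is how the \emph{lower} bound $\varphi_{n-1}(0)$ is obtained (Theorem~\ref{thm_M}). Cofinality of $\cF'_g$ in $\cF$ says nothing about an upper bound on $\ot(\cF)$, since a cofinal subset may have strictly smaller order type. Your attempt to bypass this by assigning to each $x\in\cF$ the ``height of the computation tree of $M_g(x-\eps)$'' does not work either: if $M_g$ terminates, that tree is finite, so the height is a natural number, and you would be embedding $\cF$ into $\omega$.

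The paper's proof proceeds quite differently. It isolates a property of $g$ called \emph{tameness}: for each proper $u\subsetneq\{1,\ldots,n\}$, the set of fixed points of the one-variable functions $x\mapsto g_{u,\vec p}(x,\ldots,x)$, as $\vec p$ ranges over a well-ordered parameter set $P$, has order type bounded by $\varphi_0^k(\ot(P)+1)$. Linear functions are tame because each such fixed point depends monotonically on $\vec p$, so the fixed-point set embeds into $P^{n-|u|}$. The upper bound for tame $g$ is then proved by induction on the number $m\le n$ of ``free'' arguments (arguments not pinned to a parameter in $P$): one shows $\ot(\cF_{\le m}(P))\le\varphi_{m-1}^{k_m}(\ot(P)+1)$ using wpo bounds (Higman, de Jongh--Parikh) for the base case $m=1$ and, for the inductive step, an analysis of how the set $\cF_{\le m}(P)$ is stratified by the limit points coming from the tameness bound. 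The case $m=n-1$ then gives the theorem. Nothing resembling $M_n$ appears in the upper-bound argument.
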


This upper bound is somewhat disappointing, since it was reasonable to expect to get order types approaching the small Veblen ordinal.

We prove that as long as we consider arbitrary continuous functions, we get the optimal ordinals prescribed by the bounds for Kruskal tree theorem (for the bounds see \cite{Sch79,Sch20}):

\begin{theorem}\label{thm_cont}
For each $n\ge 3$ there exists a continuous monotone function $g:\bbR^n\to\bbR$ such that $\cF=\cF(\{g\},\{0\})$ has order type $\varphi(1,\underbrace{0,\ldots,0}\limits_{\mbox{\footnotesize \textrm{$n$ times}}})$.
\end{theorem}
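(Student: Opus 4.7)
The plan is to realize the Kruskal-theorem upper bound $V_n := \varphi(1,\underbrace{0,\ldots,0}_{n})$ (established in \cite{Sch79,Sch20}) by encoding an extremal family of finite labeled trees into the reals, and designing $g$ so that iterating $g$ from $\{0\}$ enumerates this family in the canonical linearization. The upper bound is already furnished by Kruskal's theorem as sketched in the introduction, so only the matching lower-bound construction is the content.

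First, I would fix a countable family $\mathcal{T}$ of finite ordered labeled trees with internal arity $\le n$, equipped with an $n$-ary combining operation $\mathrm{comb}\colon \mathcal{T}^n \to \mathcal{T}$ generating $\mathcal{T}$ from a single base element, and a linearization $<_T$ of order type exactly $V_n$ in which $\mathrm{comb}$ is strictly increasing in each argument and dominates each argument. Next, I would construct an order-preserving injection $r\colon (\mathcal{T},<_T) \to [0,\infty)$ sending the base element to $0$, and define $g$ to implement the tree combination in the sense that $g(r(T_1),\ldots,r(T_n)) = r(\mathrm{comb}(T_1,\ldots,T_n))$ at every tree-code tuple. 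The extension to all of $\mathbb{R}^n$ must be continuous, monotone, and designed so that iterating $g$ from $\{0\}$ produces only tree-code values. A natural shape is to define $g$ via "plateau" regions around each tree-code tuple on which $g$ is constant at the corresponding $r(\mathrm{comb}(T_1,\ldots,T_n))$, joined by continuous monotone transitions; off plateaus one wants $g(x) \le \max_i x_i$ so no spurious element enters $\mathcal{F}$. The verification $\mathcal{F}(\{g\},\{0\}) = r(\mathcal{T})$, and hence of order type $V_n$, would then proceed by induction along $<_T$: every tree code is produced by the corresponding $g$-application, and no other input contributes since $g$ exceeds $\max_i x_i$ only on plateaus.

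The main obstacle is the construction of the continuous extension satisfying all constraints simultaneously. Since $r(\mathrm{comb}(T_1,\ldots,T_n)) > \max_i r(T_i)$ strictly, $g$ exceeds $\max_i x_i$ on a neighborhood of each tree-code tuple, and continuity then forces the plateau to extend outward to the level surface $\{\max_i x_i = r(\mathrm{comb}(T_1,\ldots,T_n))\}$, where the plateau value and $\max_i x_i$ automatically coincide. Different plateaus must be arranged so they are either disjoint or carry compatible values on overlaps, and the transition regions between them must respect both monotonicity and the cap $g \le \max_i x_i$ to avoid introducing non-tree-code values into $\mathcal{F}$. Meeting all these conditions requires a careful transfinite recursion that constructs $r$ along $<_T$, interleaved with a meticulous geometric layout of plateau neighborhoods, enforcing tree-order inequalities (such as $\mathrm{comb}(T_1,\ldots,T_n) <_T T'_j$ whenever $T_i \le T'_i$ componentwise with $T'_j$ the max) that the Schmidt-style linearization on the extremal tree family should afford. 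The combinatorics of this geometric arrangement is the delicate heart of the argument; once the layout is fixed, checking continuity, monotonicity, and the order type of $\mathcal{F}(\{g\},\{0\})$ reduces to routine verification.
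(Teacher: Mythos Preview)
Your high-level strategy---encode an ordinal-indexed family equipped with a strictly increasing, argument-dominating $n$-ary operation, embed into $\mathbb{R}$, then extend to a continuous monotone $g$---matches the paper's. But you have manufactured a difficulty that is not there, and the plateau machinery you propose to overcome it is both unnecessary and left unexecuted.

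The point you miss is already built into your own setup: you assume $\mathrm{comb}\colon \mathcal{T}^n\to\mathcal{T}$, so the combining operation sends \emph{every} $n$-tuple of tree codes to a tree code. Hence, once $g$ agrees with $r\circ\mathrm{comb}\circ r^{-1}$ on $r(\mathcal{T})^n$, iterating $g$ from $\{0\}=\{r(\text{base})\}$ can never leave $r(\mathcal{T})$: at each step all arguments lie in $r(\mathcal{T})$, so the output does too. The values of the continuous extension off $r(\mathcal{T})^n$ are simply never consulted in building $\cF$. Your constraint $g(x)\le\max_i x_i$ off plateaus is therefore unnecessary, and the ``spurious element'' worry evaporates. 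The paper makes this explicit by constructing a monotone \emph{bijection} $\bar\varphi^\star\colon\Lambda^n\to\Lambda\setminus\{0\}$ dominating its arguments (a fixed-point-free, gap-contracted variant of the $n$-ary Veblen function); after embedding $\Lambda$ as $W\subset\mathbb{R}$, the restriction $g|_{W^n}$ already lands in $W$, so $\cF(\{g\},\{0\})=W$ regardless of how $g$ behaves elsewhere.

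What genuinely remains is the continuous monotone extension from $W^n$ to $\mathbb{R}^n$, and here there is a real issue you do not address: the embedding $r$ must be chosen so that $W=r(\mathcal{T})$ contains none of its own limit points (i.e.\ $\sup_{\alpha<\lambda}r(\alpha)<r(\lambda)$ for every limit $\lambda$). Given that, the extension is routine multilinear interpolation on the axis-parallel boxes cut out by adjacent elements of $\overline W$, with continuity across boxes checked via a modulus-of-continuity argument. No plateaus, no transfinite geometric layout, no $g\le\max_i x_i$ condition. Your plateau proposal, by contrast, runs into real trouble: forcing $g$ to be constant on neighbourhoods while remaining globally monotone and bounded by $\max_i x_i$ elsewhere imposes conflicting constraints once plateaus for incomparable tuples interact, and you do not show how to resolve this---you defer it as ``the delicate heart of the argument,'' which is exactly the part a proof must supply.
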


The function $g$ of Theorem \ref{thm_cont}, however, is quite artificial, since is built ``backwards''. Namely we first build a certain monotone bijection on ordinals $\bar{\varphi}^\star\colon\Lambda^n\to\Lambda\setminus\{0\}$, where $\Lambda=\varphi(1,\underbrace{0,\ldots,0}\limits_{\mbox{\footnotesize \textrm{$n$ times}}})$. Then we embed $\Lambda$ into the reals, and finally we find a continuous $g$ whose restriction to the image of $\Lambda$ agrees with $\bar{\varphi}^\star$.

Finally, we make some progress on the computational problem of deciding whether a given $z\in\bbR$ belongs to $\cF(G,P)$ or not:

 \begin{theorem}\label{thm_alg}
Let $g:\bbR^n\to\bbR$ be a monotone linear function, let $P$ be a finite set, and let $\cF=\cF(\{g\},P)$. Then there exists an algorithm deciding whether a given $x\in\bbR$ belongs to the closure $\overline{\cF}$ or not.
\end{theorem}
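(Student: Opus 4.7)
The plan is to generalize the $M_n$ algorithm of Theorem~\ref{thm_M} to an arbitrary monotone linear $g$ and extract from it a membership test for $\overline{\cF}$. Writing $g(x_1,\ldots,x_n)=a_1 x_1+\cdots+a_n x_n+c$ with $a_i\ge 0$, I may assume that the coefficients, the elements of $P$, and the input $x$ are rational; otherwise one works inside the decidable ordered field that they generate. The key observation is that $\cF$ is well-ordered by Theorem~\ref{thm_upper}, and so is its closure $\overline{\cF}\subset\bbR$. Consequently, for every $x\in\bbR$ the set $\overline{\cF}\cap(-\infty,x]$ is either empty or has a maximum element, which I denote $\Pred(x)$, and since $\overline{\cF}$ is closed one has $x\in\overline{\cF}$ iff $\Pred(x)=x$. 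Hence the problem reduces to an effective computation of $\Pred(x)$.

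The core of the algorithm is a recursive procedure for $\Pred(x)$. Any candidate value for $\Pred(x)$ not already among the finitely many elements of $P$ has the form $g(y_1,\ldots,y_n)\le x$ with each $y_i\in\overline{\cF}$ satisfying $y_i<g(y_1,\ldots,y_n)$. For each of the $n!$ orderings of the coordinates one fixes the induced linear constraints and maximizes $g$ over tuples $(y_1,\ldots,y_n)$; because $g$ is linear and monotone, the optimum is attained at a tuple whose coordinates are $\Pred$-values of explicit linear combinations of $x$ and of the other coordinates. Recursively invoking $\Pred$ at these sub-thresholds, and taking the maximum over all orderings, yields $\Pred(x)$.

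Termination is the main content of the proof. Every recursive sub-call returns a value $y_i$ satisfying $y_i<g(y_1,\ldots,y_n)=\Pred(x)$, so the ordinal rank of the returned predecessor inside $\overline{\cF}$ strictly drops from one level of the recursion to the next. Since by Theorem~\ref{thm_upper} these ranks all live in $\varphi_{n-1}(0)$, a transfinite induction on the rank of $\Pred(x)$ guarantees that the recursion is well-founded and halts in finitely many steps on each input. All intermediate quantities remain in the decidable ordered field generated by the rational parameters of $g$, $P$ and $x$, so the procedure is genuinely effective.

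The main obstacle is to make the reduction to finitely many sub-calls in the second paragraph rigorous. On its face, each sub-problem has uncountably many feasible tuples, and one must argue that the supremum is either attained by, or approached arbitrarily closely by, a tuple of $\Pred$-values determined by an explicit linear programme. This requires a delicate case analysis driven by the signs of the coefficients $a_i$ and by the strict inequalities $y_i<g(y_1,\ldots,y_n)$, carefully distinguishing sub-cases in which two coordinates coincide from those in which they are separated; the fact that $\overline{\cF}$ is closed is used here to certify that the supremum value is itself an element of $\overline{\cF}$. A secondary but crucial point is to verify that ordinal well-foundedness translates into finite termination for a concrete input, which follows from the observation that the computation tree branches at most $n!$ ways at each node and that every strictly decreasing sequence of ordinals below $\varphi_{n-1}(0)$ is finite.
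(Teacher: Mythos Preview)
Your proposal has a genuine gap at precisely the point you flag as ``the main obstacle,'' and the gap is not merely one of exposition. You assert that the optimum of $g(y_1,\ldots,y_n)\le x$ over $y_i\in\overline{\cF}$ is attained when each $y_i$ is the $\Pred$-value of an ``explicit linear combination of $x$ and of the other coordinates.'' But the other coordinates are themselves unknowns to be determined by the recursion, so this is not a reduction to finitely many sub-calls with explicitly given inputs; it is a fixed-point condition on the tuple $(y_1,\ldots,y_n)$. Without a concrete rule that produces each sub-threshold from data already available at that stage, there is no algorithm to analyze.

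Relatedly, the termination argument is circular. You argue that each sub-call returns a value $y_i<\Pred(x)$, hence of strictly smaller ordinal rank. But the rank of the \emph{output} of a sub-call is only defined once that sub-call terminates; transfinite induction on the rank of $\Pred(x)$ requires knowing \emph{a priori}, from the sub-call's \emph{input}, that its answer will have smaller rank. Since your sub-thresholds depend on the unknown $y_j$'s, you cannot establish this without already knowing the answer. (Also, the remark about ``signs of the coefficients $a_i$'' is moot: monotonicity forces $a_i\ge 0$.)

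The paper's proof avoids both issues by organizing the recursion around the \emph{successor} map rather than the predecessor. It defines mutually recursive procedures $\Succ$, $\BuiltSucc$, $\Pred$, $\WeakPred$; crucially, $\Pred$ and $\WeakPred$ are not computed by optimization at all, but by enumerating $\overline{\cF}=\cF(\closure(g),P)$ (this is the content of the closure proposition) and testing candidates with $\Succ$. The procedure $\BuiltSucc(r,(y_1,\ldots,y_k))$ fills in the remaining coordinates one at a time, iterating $x_{k+1}$ downward through $\overline{\cF}$ and using linearity to jump directly to the next relevant value; this makes every sub-call's input explicit. Termination is argued not by ordinal rank of outputs but by showing that along any infinite call chain the parameter $r$ tends to $-\infty$, which eventually makes the loop condition fail. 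Finally, membership is decided via $\WeakPred(r)=r$, analogous to your $\Pred(x)=x$.
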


The question of whether there exists an algorithm that decides whether $x\in\cF$ or not is still open.

This paper is organized as follows: Section \ref{sec_basic} introduces notation and presents some basic properties of the sets $\cF$. Section \ref{sec_backg} gives some background on ordinals below the small Veblen ordinal. Section \ref{sec_WPO} gives background on well partial orders. Sections \ref{Upper_bound_section}, \ref{sec_M}, \ref{sec_cont}, \ref{sec_alg} contain the proof of Theorems \ref{thm_upper}, \ref{thm_M}, \ref{thm_cont}, \ref{thm_alg} respectively. Section \ref{sec_closure} contains a result needed for Section \ref{sec_alg}.

\section{Notation and basic properties}\label{sec_basic}

Throughout this paper, given a set $G$ of monotone functions  $g:\bbR^{k_g}\to\bbR$ and a set $P\subseteq \mathbb{R}$, we denote as $\cF(G,P)=\cF$ the inclusion-least set such that $P\subseteq \cF(G,P)$ and for any $g\in G$, $x_1,\ldots,x_{k_g}\in \cF$ we have
\begin{center} $g(x_1, \ldots, x_{k_g})>\max\limits_{1\le i \le k_g}x_i$ \quad$\Rightarrow$\quad $g(x_1,\ldots,x_{k_g})\in\cF$.
\end{center}

In other words $\cF(G,P)$ is the set of values of all closed terms $t$ built from constants from $P$ and functions from $G$, such that for any subterm $g(u_1,\ldots,u_{k_g})$ of $t$, the value of $g(u_1,\ldots,u_{k_g})$ is greater than the values of $u_1,\ldots,u_{k_g}$. We call such terms \emph{monotone} terms.

For the specific functions $g_n:\bbR^n\to\bbR$ given by $g_n(x_1, \ldots, x_n) = (x_1 + \cdots + x_n + 1)/n$, we let  $\cF_n=\cF(\{g_n\},\{0\})$ and $\cF_{\le n}=\cF(\{g_1, \ldots, g_n\},\{0\})$.

We are primarily interested in cases where $G$ is finite. However, in our arguments we will also have to consider cases in which $G$ is infinite.

Friedman noted that the following proposition can be proved using Kruskal's theorem.
\begin{proposition}\label{prop_wo}
  Suppose  $G$ is a finite set of functions and $P$ is a well-ordered set of constants. Then $\cF(G,P)$ is well-ordered.
\end{proposition}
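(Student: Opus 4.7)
The plan is to apply Kruskal's tree theorem. For each element $x \in \cF(G,P)$, fix a \emph{witness term} $t_x$: a monotone term whose value is $x$. Each such term is naturally a finite rooted ordered tree in which every internal node is labeled by some $g \in G$ and has $k_g$ children (left-to-right), and every leaf is labeled by some constant in $P$. Endow the label set $G \sqcup P$ with the well-quasi-order given by equality on $G$ (possible because $G$ is finite) and by the given well-order on $P$, with no cross-comparisons permitted; this is a WQO since $G$ is finite and $P$ is well-ordered. A basic property we will use repeatedly is that any subterm of a monotone term $t$ has value at most $\val(t)$, directly from the strict-inequality condition in the definition.

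Apply the version of Kruskal's tree theorem for ordered trees labeled by a WQO: the collection of all such witness trees is itself a WQO under the homeomorphic embedding $\hookrightarrow$, where labels are compared via the WQO on $G \sqcup P$ and children are matched by an order-preserving injection. The key claim is that $s \hookrightarrow t$ implies $\val(s) \le \val(t)$ for monotone terms $s,t$, proved by induction on $s$. If $s$ is a leaf labeled $p$, then its image in $t$ must itself be a leaf labeled by some $p' \in P$ with $p' \ge p$, since our WQO precludes comparing a constant to a function symbol; then $\val(s) = p \le p' \le \val(t)$. If $s = g(u_1,\ldots,u_k)$, then the root of $s$ maps to an internal node $v$ of $t$ necessarily labeled by the same $g$, and the subtree $t'$ rooted at $v$ has the form $g(w_1,\ldots,w_k)$; the order-preserving condition on children gives $u_i \hookrightarrow w_i$ for each $i$. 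By the inductive hypothesis, $\val(u_i) \le \val(w_i)$, so coordinate-wise monotonicity of $g$ yields $\val(s) \le \val(t')$, and since $t'$ is a subterm of the monotone term $t$, we conclude $\val(s) \le \val(t') \le \val(t)$.

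With the key claim in hand, the proposition is immediate: an infinite strictly decreasing sequence $x_1 > x_2 > \cdots$ in $\cF(G,P)$ would supply witness terms $t_{x_1}, t_{x_2}, \ldots$ among which Kruskal's theorem yields indices $i < j$ with $t_{x_i} \hookrightarrow t_{x_j}$, whence $x_i \le x_j$, contradicting $x_i > x_j$. Therefore $\cF(G,P)$ contains no infinite descending sequence, and being a subset of $\bbR$ it is linearly ordered, so it is well-ordered.

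The main obstacle is arranging the hypotheses of Kruskal's theorem so that the embedding obtained automatically implies the desired value inequality. This requires two design choices working together: the WQO on labels must be \emph{discrete} on $G$ so that distinct function symbols cannot be conflated and so that constants cannot be matched to function nodes, and the version of Kruskal invoked must preserve the left-to-right order of children, since the monotone functions $g \in G$ are not assumed to be symmetric and coordinate-wise monotonicity would otherwise fail to yield the required inequality. Both choices are standard but essential; without either one, the inductive step of the key claim breaks down.
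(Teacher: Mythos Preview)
Your proof is correct. It follows the Kruskal-based route that the paper attributes to Friedman and later sketches in Section~\ref{sec_WPO} via the term order $T(W_0,\ldots,W_n)$; your formulation with a single mixed label set $G\sqcup P$, relying on discreteness of the order on $G$ and incomparability of $G$ with $P$ to force matching arities, is a legitimate variant of that sketch. The paper's own proof of the proposition, however, deliberately takes a more elementary path: it assumes the ill-founded part $I$ of $\cF(G,P)$ is nonempty, sets $\ell=\inf I$, finds a strictly decreasing sequence in some $I_g$ converging to~$\ell$ with all arguments in the well-founded part, and applies the infinite Ramsey theorem to the comparison-pattern coloring of pairs to extract a subsequence on which every coordinate sequence is nondecreasing, so that monotonicity of $g$ makes the $g$-values nondecreasing as well---a contradiction. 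The Ramsey argument is self-contained and avoids invoking the much heavier Kruskal machinery; on the other hand, the Kruskal route buys more, since it immediately yields the quantitative bound $\ot(\cF(G,P))\le o(T(P\sqcup G_0,G_1,\ldots,G_n))$ exploited later in the paper.
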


Below is a more direct proof of Proposition \ref{prop_wo} that uses only the infinite Ramsey theorem and completeness of the reals.
\begin{proof}
  We split the set $\cF(G,P)$ into the well-founded part $W$ and the ill-founded part $I$: $a\in\cF(G,P)$ is in $I$ if there is an infinite descending sequence starting from $a$, otherwise it is in $W$. For a contradiction assume that $I$ is not empty.

  Consider $\ell=\inf I$; note that it exists since any $a\in \cF(G,P)$ is bounded from below by $\inf P$. Clearly $I=\cF(G,P)\cap (\ell,+\infty)$ and $W=\cF(G,P)\cap (-\infty,\ell]$. For each $g\in G$, let $I_g\subseteq I$ be the set of all $a\in I$ that are of the form $g(\vec{p})$, $\vec{p}\in  W^k$, where $k$ is the arity of $g$. For any $a\in I$ there is a monotone term $t$ whose value is $a$, and there is a shortest subterm $t'$ of $t$ such that its value $a'$ lies in $I$. We observe that $a'\le a$ and either $a'\in P$ or $a'\in I_g$ for some $g\in G$. Thus $\inf (\bigcup\limits_{g\in G}I_g\cup (P\cap I)))=\ell$. Since $P$ is well-founded, either $P\cap I$ is empty or $\inf (P\cap I)>\ell$. Hence for some $g\in G$ we have $\inf I_g=\ell$. Further we fix $g$ with this property and denote its arity as $k$.

  We consider a strictly decreasing sequence $x_1>x_2>\ldots$ of elements of $I_g$ converging to $\ell$. For all $n$ we choose $y_{n,1},\ldots,y_{n,k}\in W$ such that $g(y_{n,1},\ldots,y_{n,k})=x_n$. Observe that by the infinite Ramsey Theorem there exists an infinite set $A\subseteq\bbN$ such that for each $1\le i\le n$ the sequence $\langle y_{n,i}\mid n\in A\rangle$ is either increasing, or constant, or decreasing. Indeed, we consider the coloring of increasing pairs of naturals $h\colon [\mathbb{N}]^2\to \{<,=,>\}^k$, where for each $n<m$ the value $h((n,m))$ is the tuple $(c_1,\ldots,c_k)$, where each $c_i$ is the result of comparing $y_{n,i}$ with $y_{n,j}$. Clearly, any infinite monochromatic set for this coloring indeed can serve as the desired $A$.

  However none of the sequences $\langle y_{n,i}\mid n\in A\rangle$ can be decreasing, since this would contradict the assumption that all $y_{n,i}\in W$. Hence by the monotonicity of $g$, the sequence $\langle g(y_{n,1},\ldots,y_{n,k})\mid n\in A\rangle$ is non-decreasing. But it is a subsequence of our decreasing sequence, contradiction.\end{proof}

\section{Background on ordinals}\label{sec_backg}

\subsection{Ordinals below $\varphi_\omega(0)$}

A function $f$ from ordinals to ordinals is called \emph{normal} if it is strictly monotone (meaning $\alpha<\beta$ implies $f(\alpha)<f(\beta)$) and continuous (meaning for every limit ordinal $\lambda$ we have $f(\alpha) = \lim_{\alpha<\lambda} f(\alpha)$). Given a normal function $f$ the \emph{derivative} $f'$ of $f$ is the normal function enumerating all the fixed points of $f$ in the increasing order. Alternatively $f'$ can be defined by transfinite recursion as
\begin{enumerate}
\item $f'(0) = \sup_{k\in\bbN} f^k(0)$;
\item $f'(\alpha+1) = \sup_{k\in\bbN} f^k(f'(\alpha)+1)$;
\item $f'(\lambda) = \sup_{\alpha<\lambda} f'(\alpha)$, for limit ordinals $\lambda$.
\end{enumerate}

The \emph{Veblen functions} $\varphi_n$, $n\in\bbN$ are a sequence of normal functions defined by starting with $\varphi_0(\alpha) = \omega^\alpha$, and for each $n\in\bbN$, letting $\varphi_{n+1}=\varphi_n'$. These functions can be defined more succintly by letting $\varphi_n(\alpha)$ be the least ordinal $\beta$ of the form $\omega^\gamma$ such that $\beta>\varphi_n(\alpha')$ for all $\alpha'<\alpha$, and such that $\beta>\varphi_{m}(\alpha')$ for all $m<n$ and $\alpha'<\beta$.

Denote $\varphi_{\omega}(0) = \lim_{n\in\bbN}\varphi_n(0)$. Then $\varphi_\omega(0)$ is the smallest ordinal that cannot be constructed by starting from $0$ and repeatedly applying ordinal addition and the functions $\varphi_n$, $n\in\bbN$ on previously constructed ordinals. The ordinals below $\varphi_\omega(0)$ are used in Sections \ref{Upper_bound_section} and \ref{sec_M} of this paper.

\subsection{The $n$-ary Veblen functions}\label{subsec_n-ary}

Fix $n\ge 3$. The $n$-ary Veblen function $\varphi\colon \mathsf{On}^n\to\mathsf{On}$ is a generalization of the Veblen function described above, with $\varphi_m(\alpha)$ now denoted $\varphi(0,\ldots,0,m,\alpha)$. The $n$-ary function $\varphi$ is defined by ordinal induction, by letting the function $\beta\mapsto \varphi(\alpha_1,\ldots,\alpha_k,\underbrace{0,\ldots,0}\limits_{\mbox{\footnotesize $n-k-1$}}, \beta)$, where $\alpha_k>0$, enumerate the common fixed points of the functions
\begin{equation*}
\bigl\{\gamma\mapsto\varphi(\alpha_1,\ldots,\alpha_{k-1},\alpha'_k,\gamma,\underbrace{0,\ldots,0}\limits_{\mbox{\footnotesize $n-k-1$}}):\alpha'_k<\alpha_k\bigr\}.
\end{equation*}
The following proposition can serve as an equivalent definition of $\varphi$:

\begin{proposition}\label{prop_veblen_equiv}
$\varphi(\alpha_1,\ldots,\alpha_n)$ is the least ordinal $\beta$ of the form $\omega^\gamma$ such that 
$$\varphi(\alpha_1,\ldots,\alpha_{k-1},\alpha'_k,\alpha'_{k+1},\ldots,\alpha'_{n})<\beta,$$ for all $1\le k\le n$, $\alpha'_k<\alpha_k$, and $\alpha'_{k+1},\ldots,\alpha'_n<\beta$.
\end{proposition}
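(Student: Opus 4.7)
The plan is to argue simultaneously, by transfinite induction on $(\alpha_1,\ldots,\alpha_n)$ in the lexicographic order, that $\beta_\star := \varphi(\alpha_1,\ldots,\alpha_n)$ (a)~has the form $\omega^\gamma$, (b)~satisfies the displayed strict inequalities, and (c)~is the least ordinal enjoying (a) and (b). Together these three give the proposition.

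Property (a) is immediate: the base case $(0,\ldots,0,\alpha_n)$ gives $\omega^{\alpha_n}$, and in the inductive step $\beta_\star$ is a common fixed point of normal functions whose range, by the inductive hypothesis, consists of additively principal ordinals, hence $\beta_\star$ is itself additively principal. For (b), fix $k$ with $\alpha_k>0$ (the condition $\alpha'_k<\alpha_k$ being vacuous otherwise) and $\alpha'_k<\alpha_k$. The recursive definition yields the fixed-point equation $\beta_\star = \varphi(\alpha_1,\ldots,\alpha_{k-1},\alpha'_k,\beta_\star,0,\ldots,0)$; the right-hand tuple is lex-smaller than $(\alpha_1,\ldots,\alpha_n)$, so applying the inductive hypothesis to it at coordinate position $k+1$ (whose value there is $\beta_\star$), with the given $\alpha'_{k+1},\ldots,\alpha'_n<\beta_\star$, produces $\varphi(\alpha_1,\ldots,\alpha_{k-1},\alpha'_k,\alpha'_{k+1},\ldots,\alpha'_n)<\beta_\star$. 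A minor wrinkle is that the recursive definition highlights a specific $k$ (the largest index below $n$ with $\alpha_k>0$), so for other indices $k'<k$ with $\alpha_{k'}>0$ one first has to propagate the fixed-point property: substituting $\alpha'_k=0$ into the equation above and then reapplying the recursive definition to the resulting tuple shows that $\beta_\star$ is also a common fixed point of the normal functions associated with $k'$, and the same IH-based argument then delivers the inequality.

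For (c), suppose $\beta=\omega^\gamma$ satisfies the inequalities with $\beta<\beta_\star$. In the degenerate case $\alpha_1=\cdots=\alpha_{n-1}=0$, the identity $\beta_\star=\omega^{\alpha_n}$ forces $\gamma<\alpha_n$, and the instance $k=n$, $\alpha'_n=\gamma$ of the hypothesis gives $\varphi(0,\ldots,0,\gamma)=\beta\not<\beta$, a contradiction. Otherwise, for each $k$ with $\alpha_k>0$ and each $\alpha'_k<\alpha_k$, specializing the hypothesis to arbitrary $\gamma'<\beta$ in position $k+1$ and zeros afterward, passing to the supremum by continuity, and invoking normality, forces $\varphi(\alpha_1,\ldots,\alpha_{k-1},\alpha'_k,\beta,0,\ldots,0)=\beta$. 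Hence $\beta$ is a common fixed point of the normal functions whose common fixed points the recursive definition enumerates to build $\beta_\star$, so $\beta$ appears in that enumeration as $\varphi(\alpha_1,\ldots,\alpha_{n-1},\alpha'_n)$ for some $\alpha'_n<\alpha_n$; but then the hypothesis applied with $k=n$ gives $\varphi(\alpha_1,\ldots,\alpha_{n-1},\alpha'_n)<\beta$, contradicting the equality just obtained.

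The main obstacle I expect is the fixed-point propagation in (b) and the mirrored verification in (c) that the $\beta$ under consideration actually occurs in the enumeration at some $\alpha'_n<\alpha_n$; this requires handling the decomposition of $(\alpha_1,\ldots,\alpha_n)$ into prefix-zeros-and-last-coordinate dictated by the recursive definition with care, and treating the successor, limit, and zero cases for $\alpha_n$ separately in the enumeration step.
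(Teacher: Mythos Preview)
The paper does not prove this proposition. It is introduced in Section~\ref{subsec_n-ary} with the sentence ``The following proposition can serve as an equivalent definition of~$\varphi$'' and then simply stated, as a standard fact about the $n$-ary Veblen hierarchy; no argument is supplied anywhere in the paper. So there is nothing in the paper to compare your attempt against.

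Your outline is the natural one and would work. The one step that needs more care than you indicate is the appeal in part~(c) to ``continuity'' of $\gamma'\mapsto\varphi(\alpha_1,\ldots,\alpha_{k-1},\alpha'_k,\gamma',0,\ldots,0)$: the recursive definition quoted in the paper only makes $\varphi$ normal in its \emph{last} argument, so continuity at position $k+1<n$ is not for free and would itself have to be derived. A way to sidestep this is to apply the inductive hypothesis directly to the lex-smaller tuple $(\alpha_1,\ldots,\alpha_{k-1},\alpha'_k,\beta,0,\ldots,0)$: by IH its $\varphi$-value is the least $\omega$-power satisfying the corresponding family of strict inequalities, and one checks that every one of those inequalities (at indices $\le k-1$, at index $k$, and at index $k+1$) is already a consequence of the hypotheses assumed of~$\beta$. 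This yields $\varphi(\alpha_1,\ldots,\alpha_{k-1},\alpha'_k,\beta,0,\ldots,0)\le\beta$, and the reverse inequality follows from the easy fact (provable within the same induction) that $\varphi(\gamma_1,\ldots,\gamma_n)\ge\gamma_i$ for every~$i$. The same maneuver also handles the ``minor wrinkle'' in~(b) uniformly, without having to chase the fixed-point property down from the distinguished index~$k$ to smaller indices~$k'$.
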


In particular, the Feferman--Sch\"utte ordinal is $\Gamma_0 = \varphi(0,\ldots,0,1,0,0) = \varphi(1,0,0)$ (since leading zeros can be ignored). The limit of $\varphi(1,\underbrace{0,\ldots,0}\limits_{\mbox{\footnotesize $n$}})$ as $n\to\infty$ is known as the \emph{small Veblen ordinal}, and is the smallest ordinal that cannot be constructed by starting from $0$ and repeatedly applying Veblen functions of any finite arity to the previously constructed ordinals.

\subsection{Removing limit elements}

We will use several times the following simple observation:

\begin{proposition}\label{prop_remove_limits}
Let $S$ be an infinite well-ordered set, and let $S'$ be obtained from $S$ by removing the elements at limit positions. Then $\ot(S)=\ot(S')$ if $\ot(S)$ is a limit ordinal; otherwise $\ot(S)=\ot(S')+1$.
\end{proposition}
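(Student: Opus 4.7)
The plan is to identify $S$ with its order type $\alpha > 0$, so that $S'$ corresponds to the set $N := \{\delta < \alpha : \delta \text{ is a successor ordinal}\}$. First I would make the convention explicit: for the stated formula to hold, position $0$ must be counted as a ``limit position'' (treating $0$ as the limit of the empty sequence), otherwise the successor case already fails at $\alpha = 2$, where removing only nonzero limit positions would leave $\ot(S') = 2$ but $\ot(S) = 2$ is a successor.

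After fixing this convention, my plan is to avoid transfinite induction and instead exhibit an explicit order-isomorphism. Every $\delta \in N$ has a unique immediate predecessor $\delta - 1$, while for every ordinal $\beta$, the ordinal $\beta + 1$ is a successor. The map $\phi \colon N \to I$ defined by $\phi(\delta) = \delta - 1$, where $I := \{\beta : \beta + 1 < \alpha\}$, is therefore a bijection with inverse $\beta \mapsto \beta + 1$, and both directions are manifestly strictly monotone. Hence $\ot(S') = \ot(N) = \ot(I)$.

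To finish, I would identify $I$ as an initial segment of the ordinals. If $\alpha$ is a limit ordinal, then $\beta < \alpha$ implies $\beta + 1 < \alpha$, so $I = \alpha$ and $\ot(S') = \alpha = \ot(S)$. If instead $\alpha = \gamma + 1$, then $\beta + 1 < \gamma + 1$ iff $\beta < \gamma$, so $I = \gamma$ and $\ot(S) = \gamma + 1 = \ot(S') + 1$. There is no real obstacle: once the convention regarding position $0$ is pinned down, the whole argument reduces to the one-line bijection ``subtract one'', together with the elementary fact that $\beta+1 < \alpha$ is automatic for limit $\alpha$ but restricts to $\beta < \gamma$ when $\alpha = \gamma+1$.
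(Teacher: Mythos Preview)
Your argument is correct. The paper does not actually prove this proposition; it is stated as a ``simple observation'' and left without proof, so there is no paper proof to compare against. Your explicit order-isomorphism $\delta\mapsto\delta-1$ from the successor ordinals below $\alpha$ onto $I=\{\beta:\beta+1<\alpha\}$, followed by the case split on whether $\alpha$ is limit or successor, is a clean and complete justification.

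You are also right to flag the convention on position~$0$: for the formula as stated to hold (in particular already for $\ot(S)=1$ or $\ot(S)=2$), the element at position~$0$ must be removed along with the elements at nonzero limit positions. This is consistent with the paper's later uses of the proposition (e.g.\ in the proof of Lemma~\ref{Lambda_emb}), and is the standard reading under which $0$ counts as a limit ordinal.
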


\section{Well partial orders}\label{sec_WPO}

A partial order $X=(D,\preceq)$ is said to be a \emph{well partial order} (wpo) if for every infinite sequence $a_0,a_1,a_2,\ldots$ of elements of $X$ there are naturals $i<j$ such that $a_i\preceq a_j$. A \emph{linearization} of a partial order $\preceq$ is a linear order $\le$ on the same domain $D$ such that $a\preceq b$ implies $a \le b$ for all $a,b\in X$. A partial order is a wpo iff all its linearizations are well-orders \cite{higman}. 
For a wpo set $X$, the expression $o(X)$ denotes the supremum of the order types of all the linearizations of $X$. A classical theorem of de Jongh and Parikh \cite{JP77} states that for every well-partially ordered set $X$ there exists a linearization $\leq$ whose order type is precisely $o(X)$.

There are a few standard constructions of new wpo's from given ones, which we will use in this paper. For wpo's $X$ and $Y$, both their disjoint union $X\sqcup Y$ and their product $X\times Y$ are wpo's.

Recall that $\oplus$ is the \emph{natural addition} of ordinals and $\otimes$ is the \emph{natural product} of ordinals, which are defined as follows: Given ordinals $\alpha,\beta$ with Cantor normal forms 
\begin{align*}
\alpha &=\omega^{\alpha_1}+\ldots+\omega^{\alpha_n},\qquad\text{ with $\alpha_1\ge\ldots\ge \alpha_n$},\\
\beta &=\omega^{\beta_1}+\ldots+\omega^{\beta_m},\qquad\text{ with $\beta_1\ge\ldots\ge\beta_m$};
\end{align*}
their natural sum $\alpha\oplus \beta$ is the ordinal $\gamma$ with the Cantor normal form $\omega^{\gamma_1}+\ldots+\omega^{\gamma_{n+m}}$, where $\gamma_1,\ldots,\gamma_{n+m}$ are $\alpha_1,\ldots,\alpha_n,\beta_1,\ldots,\beta_m$ sorted in nonincreasing order.  The natural product of $\alpha,\beta$ is given by $$\alpha\otimes \beta=\mathop{\bigoplus\limits_{1\le i\le n}}\limits_{1\le j\le m}\omega^{\alpha_i\oplus \beta_j}.$$

De Jongh and Parikh \cite{JP77} proved that $o(X\sqcup Y)=o(X)\oplus o(Y)$ and $o(X\times Y)=o(X)\otimes o(Y)$.

For a partially ordered set $X$, denote by $X^\star$ the set of all finite sequences of elements of $X$, partially ordered by $(a_1,...,a_n)\preceq (b_1,\ldots,b_m)$ if and only if there is a strictly monotone function $f\colon \{1,\ldots,n\}\to\{1,\ldots,m\}$ such that $a_i\preceq b_{f(i)}$, for all $1\le i\le n$. Higman's Lemma \cite{higman} states that if $X$ is wpo, then $X^{\star}$ is wpo as well. Schmidt \cite{Sch79} (see also \cite{Sch20}) proved that $o(W^{\star})\le \omega^{\omega^{o(W)+1}}$.

There is a variant of Kruskal's tree theorem that is relevant for this paper, where instead of trees the order is defined on terms. Suppose we are given sets $W_0,\ldots,W_n$ of function symbols, where the functions symbols in each $W_i$ are $i$-ary, and where each $W_i$ is endowed with a partial order $\preceq_{W_i}$. Let  us define the partial order $T(W_0,\ldots,W_n)$. The domain of the order is the least set of terms $K$ such that
\begin{center}$t_1,\ldots,t_k\in K$ and $a\in W_k$, where $0\le k\le n$ \quad$\Rightarrow$\quad $a(t_1,\ldots,t_k)\in K$.\end{center}
The comparison relation $\preceq_T$ is defined recursively as follows: Let $x,y\in K$ be two terms, where $x=a(t_1,\ldots,t_k)$, $a\in W_k$, $t_1, \ldots, t_k\in K$, and $y=b(u_1,\ldots,u_\ell)$, $b\in W_\ell$, $u_1,\ldots,u_\ell\in K$. Then we let $x\preceq_K y$ if $x\preceq_K u_i$ for some $1\le i\le\ell$, or $k=\ell$ and $a\preceq_{W_k} b$ and $t_i \preceq_K u_i$ for every $1\le i\le k$.

The variant of Kruskal's theorem that we need states that for any wpo's $W_0,\ldots,\allowbreak W_n$ the order $T(W_0,\ldots,W_n)$ is a wpo. Schmidt \cite{Sch79,Sch20} extensively studied the bounds for $o(T(W_0,\ldots,W_n))$. She mostly did it in terms of labeled trees, but also she discusses the term order \cite[Section~4]{Sch20}.

This version of Kruskal theorem allows an easy alternative proof of Proposition \ref{prop_wo}; we note that this observation is already contained in an e-mail by Friedman \cite{Fri20}. Consider finite set of monotone functions on reals $G$ and a well-ordered set of constants $P$. We split $G$ into sets $G_0,\ldots,G_n$, where each $G_i$ consists only of functions of the arity $i$. We endow $G_0,\ldots,G_n$ with the discrete orders and $P$ with the standard orders on reals. Clearly, the set of monotone terms for the pair $G,P$ is a subset of $T(P\sqcup G_0,G_1,\ldots,G_n)$ and for any  monotone terms $t,u$ if $t\preceq_T u$, then the value of $t$ is smaller than or equal to the value of $u$. Thus $\cF(G,P)$ is a lineariazation of a suborder of $T(P\sqcup G_0,G_1,\ldots,G_n)$ and thus is well-ordered. The construction above also shows that $\ot(\cF(G,P))\le o(T(P\sqcup G_0,G_1,\ldots,G_n))$.

\section{Monotone functions on reals that lead to ordinals below $\varphi_\omega(0)$}\label{Upper_bound_section}

In this section we prove Theorem \ref{thm_upper}.

For a monotone function $g\colon \bbR\to \bbR$ we denote as $A(g)\subseteq \bbR\cup\{+\infty\}$ the set of limit points $$\{\lim\limits_{n\to \infty} g^n(x) \mid x\in\bbR \text{ and }g(x)> x\}.$$
For a family of monotone functions $\langle h_{\vec{p}}\colon \bbR\to\bbR \mid \vec{p}\in \bbR^n\rangle$ and a set $P\subseteq \bbR$ we put $$A(h;P)=\bigcup\limits_{\vec{p}\in P^n} A(h_{\vec{p}}).$$

For a monotone function $g\colon \bbR^n\to\bbR$, $u\subseteq \{1,\ldots,n\}$, and $\vec{p}=(p_1,\ldots,p_{n-|u|})\in\bbR^{n-|u|}$ we denote as $g_{u,\vec{p}}\colon\bbR^{|u|}\to\bbR$ the result of substitution of parameters $\vec{p}$ instead of the arguments with indices from $\{1,\ldots,n\}\setminus u$. Formally the function $g_{u,\vec{p}}$ is the function that maps $(x_1,\ldots,x_{|u|})$ to $g(y_1,\ldots,y_n)$, where for each $i$-th element $j$ of $u$ the number $y_j$ is equal to $x_i$ and for each $i$-th element $j$ of $\{1,\ldots,n\}\setminus u$ the number $y_j$ is equal to $p_i$. We denote as $\dot g_{u,\vec{p}}\colon \bbR\to \bbR$ the function $$\dot g_{u,\vec{p}}(x)=g_{u,\vec{p}}(\underbrace{x,\ldots,x}\limits_{\mbox{\scriptsize $|u|$ times}}).$$ 
Hence, $A(\dot g_u,P)$ is the set of limit points of the functions $\dot g_{u,\vec p}$ for $\vec p\in P^{n-|u|}$.

We call a monotone function $g\colon \bbR^n\to \bbR$ \emph{tame} if there is $k$ such that for any $u\subsetneq\{1,\ldots,n\}$, any well-ordered set $P\subseteq \bbR$, and any well-ordered subset $B\subseteq A(\dot g_u,P)$, we have
$$\ot(B)<  \varphi_0^k({\ot(P)+1}).$$
As we will show below, linear functions are tame.


\begin{theorem}\label{tame_bound}
Suppose $g\colon \bbR^n\to \bbR$, $n\ge 2$ is a monotone tame function. Then $\ot(\cF(\{g\},\{0\}))\le \varphi_{n-1}(0)$.
\end{theorem}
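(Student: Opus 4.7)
My plan is to argue by induction on $n\ge 2$. The base case $n=2$ yields the bound $\varphi_1(0)=\eps_0$, which generalizes the fusible-number result of \cite{ENX20} from $g_2(x,y)=(x+y+1)/2$ to arbitrary tame binary $g$; the same Ramsey/tameness argument used in the inductive step applies with simpler bookkeeping. For $n\ge 3$ I will work with the closure $\overline{\cF}=\overline{\cF(\{g\},\{0\})}$, whose order type agrees with $\ot(\cF)$ up to $1$ by Proposition~\ref{prop_remove_limits}, and stratify it by its left-limit points. The heuristic is that tameness controls how many left-limits can appear below a given ordinal height, while the inductive hypothesis applied to lower-arity restrictions $\dot g_{u,\vec p}$ of $g$ controls the material between consecutive left-limits.

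\textbf{Step 1: Ramsey analysis of left-limits.}
I would show that every left-limit $x\in\overline{\cF}$ lies in $A(\dot g_{u,\vec p})$ for some proper subset $u\subsetneq\{1,\ldots,n\}$ and some $\vec p\in\overline{\cF}^{\,n-|u|}$. Given a strictly increasing sequence $x_m\to x$ in $\cF$, write $x_m=g(\vec y_m)$ with $y_{m,i}\in\cF$, $y_{m,i}<x_m$, and apply the Ramsey coloring from the proof of Proposition~\ref{prop_wo} to pass to a subsequence on which every coordinate is monotone. Well-foundedness of $\cF$ forbids strictly decreasing coordinates, so each is either eventually constant (contributing an entry of $\vec p$) or strictly increasing (contributing an index of $u$). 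A further Ramsey refinement lets us assume all strictly increasing coordinates share a common limit, and monotonicity of $g$ then pins $x$ as the limit of the iteration $\dot g_{u,\vec p}^m(\cdot)$. The ``full-diagonal'' case $u=\{1,\ldots,n\}$ makes $x$ a fixed point of $g(\cdot,\ldots,\cdot)$; I would absorb this case by replacing one coordinate with its eventual value in $\overline{\cF}\cap(-\infty,x]$ and re-running the analysis with $|u|=n-1$, which is proper.

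\textbf{Step 2: Veblen-hierarchy bookkeeping.}
Assume for contradiction $\ot(\overline{\cF})>\varphi_{n-1}(0)$ and fix $c\in\overline{\cF}$ with $\ot(\overline{\cF}\cap[0,c))=\varphi_{n-1}(0)$. By Step~1, the set $L$ of left-limits in $[0,c)$ is contained in $\bigcup_{u\subsetneq\{1,\ldots,n\}}A(\dot g_u,\overline{\cF}\cap[0,c))$, and tameness gives a bound of the form $\varphi_0^k(\alpha+1)$ on the left-limits accumulated below any $\alpha<\varphi_{n-1}(0)$; crucially this stays strictly below $\varphi_{n-1}(0)$ because $\varphi_{n-1}(0)$ is a fixed point of $\varphi_0$. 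Between any two consecutive left-limits $\ell<\ell'$ in the natural filtration, each element of $\cF\cap(\ell,\ell')$ is the value of a monotone term in which at least one coordinate is pinned to a value $\le\ell$, so this block is generated by a fusible-style system over a restriction $\dot g_{u,\vec p}$ of arity $|u|<n$ with initial set contained in $\overline{\cF}\cap[0,\ell]$. Restrictions of a tame function remain tame of lower arity (sub-restrictions of $\dot g_{u,\vec p}$ are sub-restrictions of $g$), so the inductive hypothesis---suitably strengthened to allow arbitrary finite parameter sets---bounds each block's contribution by $\varphi_{n-2}$ of the outer ordinal. Transfinite recursion alternating the tameness $\varphi_0^k$ step with the inductive $\varphi_{n-2}$ step produces a total ordinal bounded by $\sup_m\varphi_{n-2}^m(0)=\varphi_{n-1}(0)$, contradicting the assumption.

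\textbf{Main obstacle.}
The central difficulty is the precise Veblen-hierarchy accounting. One must show that one layer of left-limits together with the intervening blocks contributes exactly one $\varphi_{n-2}$ step rather than something like $\varphi_{n-2}(\cdot)\otimes\omega$ that would overshoot. The identity $\varphi_0^k(\alpha+1)<\varphi_{n-1}(0)$ for $\alpha<\varphi_{n-1}(0)$ is what keeps the tameness bound consistent with the target ordinal, and I expect most of the technical work to consist of matching this identity with the inductive $\varphi_{n-2}$ bound via a careful ranking function on $\overline{\cF}$. Two secondary subtleties are the full-diagonal case in Step~1, which requires verifying that such fixed points of $g(\cdot,\ldots,\cdot)$ do not produce uncontrolled limits in $\overline{\cF}$, and confirming that the lower-arity restrictions $\dot g_{u,\vec p}$ inherit tameness with a uniform degree, as needed for the induction to close cleanly.
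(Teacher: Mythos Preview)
Your overall shape---stratify by limit points, bound those via tameness, and handle the material in between via a lower-arity inductive hypothesis---is the right instinct, and is what the paper does. But two concrete problems prevent your induction from closing.

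\textbf{The ``pinned coordinate'' claim is false.} You assert that between consecutive left-limits $\ell<\ell'$, every element of $\cF\cap(\ell,\ell')$ has at least one argument $\le\ell$. This is not true: nothing prevents $g(x_1,\ldots,x_n)$ with all $x_i\in(\ell,\ell')$. In fact, between \emph{consecutive} left-limits of $\overline{\cF}$ there are only finitely many points, so the picture in which each such block contributes a $\varphi_{n-2}$ factor is simply wrong. The correct stratification is more delicate: the paper defines, for each limit point $a_\alpha$, a sequence $b_0\le b_1\le\cdots$ with $b_0=\sup_{\beta<\alpha}a_\beta$ and $b_{i+1}$ the \emph{least} element of the form $g_{v,\vec r}(\vec d)$ with all of $\vec d$ strictly above $b_i$. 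One then shows (via the Ramsey argument you sketch) that $\lim b_i\ge a_\alpha$, and that below $b_{i+1}$ every element uses at most $m-1$ arguments above $b_i$. This is what forces the drop in arity---not the mere fact of lying between left-limits.

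\textbf{The inductive hypothesis must be quantitative in the initial set.} Once you pin some arguments below $b_i$, those arguments range over the well-ordered set $U_i=\cF\cap(-\infty,b_i)$, which is infinite with order type growing toward $\varphi_{n-1}(0)$. So you are not applying the inductive hypothesis to a lower-arity tame function with a finite initial set (as you write), but to a family $\{g_{u,\vec p}:\vec p\in U_i^{\,n-m}\}$ with $U_i$ large. The theorem as stated gives no bound here. The paper therefore proves a strictly stronger statement by induction on the number $m$ of free arguments (not on the arity $n$): for every well-ordered $P$, $\ot(\cF_{\le m}(P))\le\varphi_{m-1}^{k_m}(\ot(P)+1)$. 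The base case $m=1$ is handled by Higman's lemma and Schmidt's wpo bounds, not by the $\eps_0$ result for fusible numbers. Only at the very end does one specialize to $P=\{0\}$ and the full arity via the iteration $a_{i+1}=g(a_i,\ldots,a_i)$.

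In short, the induction you need is on $m$ with an $\ot(P)$-dependent bound, and the interval decomposition you need is the $b_i$-sequence, not consecutive left-limits.
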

\begin{proof}
  Let the function $g$ be fixed. For $m\le n$ and $P\subseteq \bbR$ we denote as $G_m(P)$ the set of multi-variate monotone functions on $\bbR$ consisting of all functions $g_{u,\vec{p}}$, where $u\subseteq \{1,\ldots,n\}$ is of  cardinality $m$ and $\vec{p}\in P^{n-|u|}$. We put $G_{\le m}(P)=\bigcup\limits_{m'\le m}G_{m'}(P)$, and $\cF_{\le m}(P) = \cF(G_{\le m}(P),P)$. Hence, $\cF_{\le m}(P)\subseteq \cF(\{g\},P)$ is the set of reals that can be constructed by starting from $P$ and repeatedly applying $g$ on at least $n-m$ elements of $P$ and at most $m$ other previously constructed elements.

As was noted in Section \ref{sec_WPO}, it follows from Kruskal's theorem that for any $0\le m\le n$ and any well-ordered set $P$, the set $\cF_{\le m}(P)$ is well-ordered.

For $1\le m< n$ we will prove by induction on $m$ that there is a natural number $k_{m}$ such that for any well-ordered $P$ we have  \begin{equation}\label{tmeq3}\ot(\cF_{\le m}(P))\le \varphi_{m-1}^{k_m}(\ot(P)+1).\end{equation}

First consider the case of $m=1$. Consider the wpo $W=\bigsqcup\limits_{1\le i\le n}(P,<)^{n-1}$. 
We put in correspondence to each unary function $h=g_{\{i\},\vec{p}}\in G_{1}(P)$ the element $e(h)$ of $W$ that is the tuple $\vec{p}$ from the $i$-th copy of $(P,<)^{n-1}$. Observe that if $e(h_1)\preceq_W e(h_2)$ then $h_1$ is pointwise less than or equal to $h_2$. Now considering the definition of $\cF_{\le 1}(P)=\cF(G_1(P), P\cup G_0(P))$ as the set of values of monotone terms, we observe that it is isomorphic to a linearization of a subset of $T((G_{0}(P),<)\sqcup (P,<),W)$. Hence $\ot(\cF_{\le 1}(P))\le o(T((G_{0}(P),<)\sqcup (P,<),W))$. Notice that in general $T(W_0,W_1)$ is isomorphic to $W_1^{\star}\times W_0$. Thus $$\ot(\cF_{\le 1}(P))\le o(((G_{0}(P),<)\sqcup (P,<))\times W^{\star}).$$

Now let us apply results about upper bounds for the order types of linearizations of wpo's. Observe that the set $G_0(P)$ is the set of reals consisting of values of the form $g(\vec{p})$, where $\vec{p}\in P^n$. Thus $(G_0(P),<)$ is isomorphic to a linearization of a subset of $(P,<)^n$. Hence  $$\ot(G_{0}(P)\sqcup P))\le o((P,<)\sqcup(P,<)^n)<(\ot(P,<)+1)^\omega.$$  In the same manner it is easy to see that $o(W)<(\ot(P)+1)^\omega$. Notice that
$$\begin{aligned}(\ot(P)+1)^\omega&\le (\varphi_0(\ot(P)+1))^\omega=\varphi_0((\ot(P)+1)\omega)\le \varphi_0(\varphi_0(\ot(P)+1)\varphi_0(1))\\ & =\varphi_0(\varphi_0((\ot(P)+1)+1))\le \varphi_0(\varphi_0(\varphi_0(\ot(P)+1)))=\varphi_0^3(\ot(P)+1).\end{aligned}$$
By the upper bound for Higman's Lemma we know that $$o(W^{\star})\le \varphi_0^2(o(W)+1)< \varphi_0^2(\varphi_0^3(\ot(P)+1))=\varphi_0^5(\ot(P)+1).$$
Thus 
$$\begin{aligned} \ot(\cF_{\le 1}(P))& \le o(((G_{0,P},<)\sqcup (P,<))\times W^{\star})\le \\ & \varphi_0^3(\ot(P)+1) \otimes \varphi_0^5(\ot(P)+1)\\ & = \varphi_0(\varphi_0^4(\ot(P)+1)+\varphi_0(\ot(P)+1))\le \varphi_0^6(\ot(P)+1).\end{aligned}$$

Now we assume that $m>1$. Let us consider the set of limit points $$A=\bigcup\limits_{u\in\cP^{=m}(\{1,\ldots,n\})} A(g_u,P)$$ and its subset $$\begin{aligned} A'=\{a\in A\mid  a=\lim_{i\to +\infty}\dot g^i_{u,\vec{p}}(b) &\text{, for some } b\in\cF_{\le m}(P)\text{, }u\in\mathcal{P}^{=m}(\{1,\ldots,n\}),\\ & \text{and }\vec{p}\in P^{n-m}\text{ such that }\dot g_{u,\vec{p}}(b)>\max(\vec p,b)\}.\end{aligned}$$ Since $\bigsqcup\limits_{u\in\cP^{=m}(\{1,\ldots,n\}}\cF_{\le m}(P)\times P^{n-m}$ is a wpo, the set $A'$ is well-ordered. Using the tameness of $g$, we conclude that $\ot(A')<\varphi_0^k(\ot(P)+1)\binom{n}{m}$, where $k$ is the tameness constant of $g$, and $\binom{n}{m}\in\bbN$ is a binomial coefficient. We further consider the set $A''=A'\cup\{\pm\infty\}$.

Let $\Lambda=\ot(A'')$ and $\langle a_\alpha\mid \alpha<\Lambda\rangle $ be enumeration of elements of $A''$ in increasing order. Our goal will be to show by transfinite induction on  $\alpha<\Lambda$ that \begin{equation}\label{tmeq}\ot(\cF_{\le m}(P)\cap (-\infty,a_\alpha))\le\varphi_{m-1}(\ot(P)+\alpha+1).\end{equation}



The case $\alpha=0$ follows since $a_0=-\infty$, so suppose $\alpha>0$. We put $b_0=\sup\limits_{\beta<\alpha}a_\beta$ and define $b_{i+1}\in\bbR\cup\{+\infty\}$ recursively as follows:
$$\begin{aligned}b_{i+1}=\min \{g_{v,\vec{r}}(\vec{d})\mid\,& v\in\cP^{=m}(\{1,\ldots,n\}),\; \vec{r}\in P^{n-m},\\ &  \vec{d}\in (\cF_{\le m}(P)\cap(b_i,\infty))^m\text{, and } g_{v,\vec{r}}(\vec{d})>\max(\vec{d},\vec{r}).\}\end{aligned}$$
Since $b_{i+1}$ is defined as the minimum of a subset of $\cF_{\le m}(P)$ it could be undefined only if the corresponding set is empty; in this case we put $b_{i+1}=+\infty$. 

We claim that $\lim_{i\to+\infty}b_i\ge a_{\alpha}$. If some $b_i$ is equal to $+\infty$, then the claim is trivially true. Hence we assume without loss of generality that all $b_i$ are below $+\infty$. For each $i$ we fix $v_i\in \cP^{=n-m}(\{1,\ldots,n\})$, $\vec{r}_i\in P^{n-m}$, and $\vec{d}_i\in (\cF_{\le m}(P)\cap(b_i,\infty))^m$ such that $b_{i+1}=g_{v_i,\vec{r}_i}(\vec{d}_i)$ and $\max(\vec{d}_i)<b_{i+1}$. We fix an infinite set of naturals $I_0$ such that there is a fixed $v$ such that $v_i=v$, for all $i\in I_0$. Since $P^{n-m}$ is a wpo, there is an infinite set of naturals $I_1\subset I_0$ such that the sequence $\langle \vec{r}_i \mid i\in I_1\rangle$ is either pointwise monotone increasing or constant.

Let $k=k_0<k_1<k_2<\ldots$ be the sequence enumerating the set $I_1$. By induction on $i$ we prove that $\dot g_{v,\vec{r}_{k}}^i(\max(\vec{d}_k))\le b_{k_i+1}$. Indeed, the base of the induction holds since $\max(\vec{d}_k)< b_{k+1}=b_{k_0+1}$ and the induction step follows since
\begin{multline*}\dot g_{v,\vec{r}_{k}}^{i+1}(\max(\vec{d}_k))\le \dot g_{v,\vec{r}_k}(b_{k_{i}+1})\le \dot g_{v,\vec{r}_k}(b_{k_{i+1}})\le \dot g_{v,\vec{r}_{k_{i+1}}}(b_{k_{i+1}})\\ \le g_{v,\vec{r}_{k_{i+1}}}(\vec{d}_{k_{i+1}})=b_{k_{i+1}+1}.
\end{multline*}
Thus $c=\lim_{i\to \infty} \dot g_{v,\vec{r}_k}^i(\max(\vec{d}_k))\le \lim_{i\to\infty} b_i$. Since  $\dot g_{v,\vec{r}_k}(\max(\vec{d}_k))>\max(\vec{d}_k)$, we have $c\in A$. Furthermore, since $\max(\vec{d}_k)\in \cF(G_{\le m}(P),P)$, we have $c\in A'$. Therefore, since $a_\alpha$ is the least element of $(A'\cup \{+\infty\})\cap [b_0,+\infty]$ and $c>b_0$, we have $a_\alpha\le c\le \lim_{i\to\infty} b_i$. This concludes the proof that $\lim_{i\to +\infty} b_i\ge a_\alpha$.

Let $U_{i}=\cF_{\le m}(P)\cap (-\infty,b_{i})$. Since $U_0=\cF_{\le m}(P)\cap (-\infty,\sup\limits_{\beta<\alpha}a_\beta)$, from the transfinite induction hypothesis (\ref{tmeq}) it follows that \begin{multline*}
\ot(U_0)=\sup\limits_{\beta<\alpha}\ot(\cF_{\le m}(P)\cap (-\infty,a_\beta))\le \sup\limits_{\beta<\alpha}\varphi_{m-1}(\ot(P)+\beta +1)\\=\varphi_{m-1}(\ot(P)+\alpha).
\end{multline*}
From the definition of $b_{i+1}$ it immediately follows that
\begin{equation*}
U_{i+1}\subseteq \cF(G_{\le m-1}(U_i),U_i\cup P)\subseteq\cF_{\le m-1}(U_i\cup P).
\end{equation*}
Applying the induction assumption (\ref{tmeq3}) for $m-1$, we obtain
\begin{equation*}
\ot(U_{i+1})\le\varphi_{m-2}^{k_{m-1}}(\ot(U_i\cup P)+1)\le \varphi^{k_{m-1}}_{m-2}(\ot(P)\oplus\ot(U_i)+1).
\end{equation*}
It follows by induction on $i$ that $\ot(U_i)\le\varphi_{m-2}^{i(k_{m-1}+1)}(\ot(U_0))$. Therefore,
\begin{multline*}
\ot(\cF_{\le m}(P)\cap (\-\infty,a_\alpha))\le \sup\limits_{i\in \mathbb{N}} \ot(U_i)\\ \le \sup\limits_{i\in\mathbb{N}}\varphi_{m-2}^i(\varphi_{m-1}(\ot(P)+\alpha)+1)=\varphi_{m-1}(\ot(P)+\alpha+1),
\end{multline*}
concluding the inductive proof of (\ref{tmeq}).

Now let us use (\ref{tmeq}) to finish the inductive proof of (\ref{tmeq3}). We simply use (\ref{tmeq}) for the case of $a_{\Lambda-1}=+\infty$:
$$\ot(\cF_{\le m}(P))\le \varphi_{m-1}(\ot(P)+\Lambda)<\varphi_{m-1}(\varphi_0^{k+1}(\ot(P)+1))\le \varphi_{m-1}^2(\ot(P)+1).$$
Hence we have (\ref{tmeq3}) with $k_m=2$.

Finally, let us use (\ref{tmeq3}) to give a bound for $\cF(\{g\},\{0\})$. Let $a_0=0$ and $a_{i+1}=g(a_i,\ldots,a_i)$. We prove by induction on $i$ that for $U_i=\cF(\{g\},\{0\})\cap(-\infty,a_i]$ we have $\ot(U_i)< \varphi_{n-1}(0)$. It is obviously the case for $i=0$. For the step of induction we observe that $U_{i+1}\subseteq \cF_{\le n-1}(U_i)$ which allows us to prove the step of induction using (\ref{tmeq3}) for $m=n-1$. Thus $\cF(\{g\},\{0\})\le \varphi_{n-1}(0)$.
\end{proof}

\begin{lemma}\label{linear_are_tame}
For any $\alpha_1,\ldots,\alpha_n\ge 0$ the linear function $g(x_1,\ldots,x_n)=\alpha_1x_1+\ldots+\alpha_nx_n+C$ is tame. 
\end{lemma}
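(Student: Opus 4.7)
My plan is to compute $\dot g_{u,\vec p}$ explicitly as a one-variable affine function, show by a direct case analysis that its iteration produces at most one finite limit point (and possibly $+\infty$), and observe that this finite limit depends monotonically on the parameter vector $\vec p$. Bounding $\ot(B)$ then reduces to the classical estimate $o(P^k)=\ot(P)^{\otimes k}$.

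Fix a proper $u\subsetneq\{1,\ldots,n\}$ and set $s=\sum_{j\in u}\alpha_j\ge 0$. Enumerating $\{1,\ldots,n\}\setminus u$ as $j_1<\ldots<j_{n-|u|}$ and writing $L(\vec p)=\sum_{i=1}^{n-|u|}\alpha_{j_i}p_i+C$, the linearity of $g$ yields $\dot g_{u,\vec p}(x)=sx+L(\vec p)$, where $L$ is monotone in $\vec p$ (since all $\alpha_j\ge 0$). The iteration of this affine map splits into three cases. If $s<1$, then $\dot g_{u,\vec p}(x)>x$ iff $x<L(\vec p)/(1-s)$, and in that range the iterates converge monotonically to the fixed point $\xi(\vec p)=L(\vec p)/(1-s)$, so $A(\dot g_{u,\vec p})=\{\xi(\vec p)\}$. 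If $s=1$ with $L(\vec p)>0$, or if $s>1$ with $x>-L(\vec p)/(s-1)$, the iterates diverge to $+\infty$. In every remaining subcase $A(\dot g_{u,\vec p})=\emptyset$. Hence in all cases $A(\dot g_{u,\vec p})\subseteq\{\xi(\vec p)\}\cup\{+\infty\}$, with the first set omitted when $s\ge 1$.

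Taking the union over $\vec p\in P^{n-|u|}$ yields $A(\dot g_u,P)\subseteq\xi(P^{n-|u|})\cup\{+\infty\}$. The map $\xi\colon P^{n-|u|}\to\bbR$ is monotone for the componentwise order, so a bad sequence in its image pulls back to a bad sequence in the wpo $P^{n-|u|}$; therefore every well-ordered $B\subseteq A(\dot g_u,P)$ satisfies $\ot(B)\le o(P^{n-|u|})+1=\ot(P)^{\otimes(n-|u|)}+1$ by De Jongh--Parikh. A routine Cantor-normal-form estimate gives $\rho^{\otimes k}\le\omega^{(\beta_1+1)\cdot k}<\omega^{\omega^{\rho+1}}=\varphi_0^2(\rho+1)$ for every finite $k\ge 1$ and every ordinal $\rho$ (where $\omega^{\beta_1}$ is the leading term of $\rho$), so $\ot(B)<\varphi_0^2(\ot(P)+1)+1\le\varphi_0^3(\ot(P)+1)$. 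Thus $k=3$ is a uniform tameness constant for $g$. I do not foresee a serious obstacle: the substance of the lemma is the elementary dichotomy ``an affine map either has a unique finite fixed point or its iterates blow up'', and the ordinal bookkeeping is routine once the monotone dependence of $\xi$ on $\vec p$ is noted.
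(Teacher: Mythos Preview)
Your proof is correct and follows essentially the same approach as the paper: both arguments observe that $\dot g_{u,\vec p}$ is affine with at most one limit point depending monotonically on $\vec p$, then bound $\ot(B)$ via $o(P^{n-|u|})=\ot(P)^{\otimes(n-|u|)}$ and a routine CNF estimate. If anything, your case analysis for $s\ge 1$ is more explicit than the paper's (which simply asserts ``a single point'' without separating the $+\infty$ case), and your final constant $k=3$ versus the paper's $k=2$ is immaterial for tameness.
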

\begin{proof}
Observe that for any $u\subsetneq \{1,\ldots,n\}$ and $\vec{p}\in \mathbb{R}^{n-|u|}$ the set $A(g_{u,\vec{p}})$ consists of a single point. We denote this point as $h_u(\vec{p})$. Notice that $h_u\colon \bbR^{n-|u|}\to\bbR$ is monotone. Consider well-ordered $P\subseteq \mathbb{R}$. Observe that $A(g_u;P)$ is isomorphic to a linearization of a suborder of the wpo $P^{n-|u|}$. Hence $$\begin{aligned} \mathsf{ot}(A(g_u;P))&\le& o(P^{n-|u|})\\ &\le& \underbrace{\mathsf{ot}(P)\otimes \ldots \otimes \mathsf{ot}(P)}\limits_{\mbox{\footnotesize $n-|u|$-times}}\\ &\le& \varphi_0(\underbrace{\mathsf{ot}(P)\oplus\ldots \oplus \mathsf{ot}(P)}\limits_{\mbox{\footnotesize $n-|u|$-times}})\\ &\le &\varphi_0(\mathsf{ot}(P)\omega)\\ &\le &\varphi_0(\varphi_0(\mathsf{ot}(P)+1)).\end{aligned}$$ 
\end{proof}

Theorem \ref{tame_bound} and Lemma \ref{linear_are_tame} together imply Theorem \ref{thm_upper}.

\section{\boldmath Functions $M_n$}\label{sec_M}

In this section we prove Theorem \ref{thm_M}. Recall that for $n\ge 2$ the function $M_n$ is a partial function given by the following recursive algorithm:
\begin{equation*}
    M_n(x) = \begin{cases} -x, &\text{if $x<0$};\\ \underbrace{M_n(x-M_n(x-M_n(\cdots M_n(x-1)\cdots)))}_\text{$n$ instances of $M_n$}/n, &\text{otherwise.}\end{cases}
  \end{equation*}

In other words, given $x\ge 0$ algorithm $M_n$ lets $t_0(x)=1$, then lets $t_i(x)=M_n(x-t_{i-1}(x))$ for $i=1,\ldots, n$, and then it outputs $t_n(x)/n$.

\begin{remark}\label{R_algorithms} Although not that much relevant to our paper, there is a question of what precisely is an algorithm working with arbitrary real numbers. For definiteness we will assume that our model of computation over reals is $\Sigma$-definability in the structure $\mathbb{HF}(\mathbb{R},0,1,+,\times)$, see \cite{MK08,EPS11}. Alternatively, the function $M_n$ can be computed over $\bbQ$ by a standard Turing machine.
\end{remark}

It is not clear a priori that $M_n$ terminates for all inputs $x\in\bbR$. But clearly, whenever $M_n$ terminates on an input $x$, the value $M_n(x)$ that it returns is positive.

For the rest of the section we will assume that $n$ is fixed and we will denote $M_n$ simply as $M$.

\begin{lemma}\label{lem_d}
Suppose $M(x)$ terminates. Then:
\begin{enumerate}
\item For $0\le d<M(x)$, $M(x+d)$ terminates and satisfies $M(x+d) = M(x)-d$.
\item If $x\ge 0$, then $t_i(x)\ge i M(x)$ for each $0\le i\le n$.
\end{enumerate}
\end{lemma}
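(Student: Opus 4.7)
The plan is to prove (1) and (2) simultaneously by well-founded induction on the depth of the recursive computation of $M(x)$, which is finite because $M(x)$ terminates by hypothesis. The base case is $x<0$: then $M(x)=-x$, and (1) follows directly since $0\le d<-x$ implies $x+d<0$ and hence $M(x+d)=-(x+d)=M(x)-d$, while (2) holds vacuously. In the inductive step ($x\ge 0$), I would first prove (2) and then use it to derive (1). The interdependence runs as follows: (1) at $x$ requires (2) at $x$, whereas (2) at $x$ only invokes (1) at the sub-computation points $y=x-t_i(x)$, which have strictly smaller rank and so are covered by the outer induction hypothesis.

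For (2), I would perform reverse induction on $i$ from $n$ down to $0$. The base case $i=n$ is immediate from the definition $t_n(x)=nM(x)$. For the inductive step, assume $t_{i+1}\ge(i+1)M(x)$. If $t_i<t_{i+1}$, set $y=x-t_i$; then $M(y)=t_{i+1}$ is computed as a sub-call of $M(x)$, so part (1) of the outer induction hypothesis applies at $y$. Applying it with shift $t_i$, which is legitimate since $t_i<t_{i+1}=M(y)$, yields $M(x)=M(y)-t_i=t_{i+1}-t_i$, hence $t_i=t_{i+1}-M(x)\ge iM(x)$. If instead $t_i\ge t_{i+1}$, then $t_i\ge t_{i+1}\ge(i+1)M(x)\ge iM(x)$ holds directly, using $M(x)\ge 0$.

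For (1) in the inductive step, given $0\le d<M(x)$, I would run the defining recursion of $M(x+d)$, denote its intermediate values by $s_j=s_j(x+d)$, and show by forward induction on $j$ that $s_j=t_j(x)-jd$; at $j=n$ this delivers $M(x+d)=s_n/n=M(x)-d$. The inductive step rewrites
\[s_{j+1}=M((x+d)-s_j)=M\bigl((x-t_j)+(j+1)d\bigr),\]
and then invokes part (1) at the sub-computation point $y=x-t_j$ with shift $(j+1)d$. The required inequality $(j+1)d<M(y)=t_{j+1}$ follows by combining $d<M(x)$ with the bound $t_{j+1}\ge(j+1)M(x)$ just established in (2). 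Termination of $M(x+d)$ is a by-product, since each $s_j$ is then seen to be computed by a (terminating) sub-call.

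The main obstacle I anticipated was (2), because a naive forward induction on $i$ stalls precisely in the case $t_i\ge t_{i+1}$: there part (1) cannot be applied at $y=x-t_i$ with shift $t_i$, and ruling the case out seems to require reasoning about jumps of $M$ directly. Reversing the induction direction dissolves the difficulty, since the awkward case $t_i\ge t_{i+1}$ now immediately inherits the desired bound from the (strictly stronger) bound already assumed on $t_{i+1}$, while the opposite case $t_i<t_{i+1}$ is exactly the regime in which (1) applies cleanly at $y=x-t_i$ with the shift we need.
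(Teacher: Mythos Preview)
Your proof is correct and follows essentially the same approach as the paper: a joint induction on recursion depth, with part~(2) proved by reverse induction on $i$ (using part~(1) at the sub-call $y=x-t_i$), and part~(1) proved by forward induction on $j$ (using the freshly established bound $t_{j+1}\ge (j+1)M(x)$ to justify the shift $(j+1)d$). The only cosmetic difference is your case split in~(2): you branch on $t_i<t_{i+1}$ versus $t_i\ge t_{i+1}$, whereas the paper branches on $t_i<\tfrac{i}{i+1}t_{i+1}$; both thresholds work, and yours is arguably the cleaner choice.
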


\begin{proof}
We prove both claims jointly by induction on the depth of the recursive calls. Suppose $M(x)$ terminates, and suppose the claims hold for all recursive calls made by $M(x)$.

We start by proving the second claim. We fix $x\ge 0$ and prove that $t_i(x)\ge iM(x)$ by induction on $0\le i\le n$, in decreasing order. The case $i=n$ is trivial. Hence, suppose $i<n$. By induction assumption $t_{i+1}(x) \ge (i+1)M(x)$. Since $M(x)>0$, if $t_i(x)\ge \frac{i}{i+1} t_{i+1}(x)$ we are done. Therefore, suppose $t_i(x)< \frac{i}{i+1} t_{i+1}(x)$. By induction assumption for the recursive calls, we have $M(x-t_i(x)+d) = t_{i+1}(x) - d$ for all $0\le d<M(x-t_i(x))=t_{i+1}(x)$, so in particular we have
\begin{multline*}
    M(x)=M(x-t_i(x)+t_i(x))=M(x-t_i(x))-t_i(x)\\ =t_{i+1}(x)-t_i(x) \ge (i+1)M(x) - t_i(x),
\end{multline*}
which implies that $t_i(x) \ge i M(x)$, as desired.

We now prove the first claim. If $x<0$, then the claim immediately follows from the definition of the function $M$. Suppose $x\ge 0$. We show by induction on $i$ that $t_i(x+d) = t_i(x) - i d$, for any non-negative $d<M(x)$. The case $i=0$ is trivial, so let $i\ge 1$. By induction on the recursive calls, we have $M(x-t_{i-1}(x) + z) = t_i(x) - z$ for all $z<t_i(x)$. In particular, since $id<iM(x)\le t_i(x)$, we have
\begin{multline*} 
    t_i(x+d) = M(x-t_{i-1}(x+d)+d) = M(x - t_{i-1}(x) + id)\\ = M(x-t_{i-1}(x)) - id = t_i(x) - id.
\end{multline*}

Therefore, for any $0\le d <M(x)$ we have $M(x+d)= t_n(x+d)/n = (t_n(x) - nd)/n = M(x) - d$, as desired.
\end{proof}

\begin{proposition}\label{M_is_total}
Algorithm $M$ terminates on all real inputs.
\end{proposition}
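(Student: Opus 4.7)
The plan is to prove termination by contradiction, following the standard infimum-of-failure strategy familiar from the $n=2$ case in \cite{ENX20}. Let $S = \{x\in\bbR : M(x)\text{ does not terminate}\}$ and suppose for contradiction that $S\ne\emptyset$. Since the explicit branch $M(x) = -x$ handles all $x<0$, we have $S\subseteq [0,\infty)$, so $\ell := \inf S$ exists and $\ell\ge 0$.

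The key step is to show that $M(\ell)$ itself terminates. I would first observe, by an easy induction on recursion depth, that whenever $M$ terminates it returns a strictly positive value (for $x<0$ this is just $-x>0$, and for $x\ge 0$ it is $t_n(x)/n$ with $t_n(x)>0$ by the inductive observation applied to the sub-calls). Then I would show by induction on $i$ that each $t_i(\ell)$ is well-defined and strictly positive: the base case $t_0(\ell)=1$ is immediate, and in the inductive step $t_i(\ell) = M(\ell - t_{i-1}(\ell))$ is a call to $M$ at an argument strictly less than $\ell$ (since $t_{i-1}(\ell)>0$), which therefore terminates by the definition of $\ell$ and returns a positive value. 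In particular $t_n(\ell)$ exists, so $M(\ell) = t_n(\ell)/n$ terminates.

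Finally, applying Lemma \ref{lem_d}(1) with $x=\ell$, the function $M$ terminates throughout $[\ell, \ell + M(\ell))$. Combined with termination on $(-\infty,\ell)$, this yields $S\cap (-\infty, \ell + M(\ell)) = \emptyset$, contradicting $\ell = \inf S$ since $M(\ell)>0$. The one subtlety to watch for is that $\ell$ is only an infimum and need not lie in $S$, but this causes no trouble: the termination of $M(\ell)$ uses only that $M$ terminates on $(-\infty,\ell)$, which holds independently of whether $\ell\in S$, and Lemma \ref{lem_d}(1) then delivers the contradiction uniformly in a single step.
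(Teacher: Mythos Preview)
Your proof is correct and follows essentially the same argument as the paper: take $\ell=\inf S$, use positivity of $M$'s values to see that each recursive call $M(\ell-t_{i-1}(\ell))$ lands strictly below $\ell$ and hence terminates, conclude that $M(\ell)$ terminates, and then invoke Lemma~\ref{lem_d}(1) to obtain termination on $[\ell,\ell+M(\ell))$, contradicting the definition of $\ell$. Your version is slightly more explicit (spelling out the induction on $i$ and noting $\ell\ge 0$), but there is no substantive difference in approach.
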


\begin{proof}
Let $S\subset\bbR$ be the set of all inputs on which $M$ does not terminate, and suppose for a contradiction that $S\neq\emptyset$. Let $x= \inf S$. Since $M(x-\eps)$ terminates for all $\eps>0$, and since $M$ outputs a positive value whenever it terminates, it follows that $M(x-t_i(x))$ terminates for all $i\le n$. Hence, $M(x)$ itself terminates. But then Lemma \ref{lem_d} implies that $M(x+d)$ terminates for all $0\le d<M(x)$, yielding a contradiction to the definition of $x$.
\end{proof}

\begin{lemma}\label{lem_leq}
For every $x\le y$ we have $x+M(x)\leq y+M(y)$, with equality if and only if $y<x+M(x)$.
\end{lemma}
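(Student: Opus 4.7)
The plan is to split the argument into two cases according to the position of $y$ relative to $x + M(x)$, handling the equality case with Lemma \ref{lem_d}(1) and the strict case with the positivity of $M$.

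For the first case, $y < x + M(x)$: I set $d = y - x$, which satisfies $0 \le d < M(x)$ by hypothesis (combined with $x \le y$). Since $M(x)$ terminates by Proposition \ref{M_is_total}, Lemma \ref{lem_d}(1) applies and gives $M(y) = M(x+d) = M(x) - d$. Substituting yields $y + M(y) = (x+d) + (M(x) - d) = x + M(x)$, which simultaneously delivers the non-strict inequality and the ``if'' half of the biconditional in this range.

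For the second case, $y \ge x + M(x)$: I aim to establish the strict inequality $x + M(x) < y + M(y)$. The key ingredient is the fact, recorded just before Lemma \ref{lem_d}, that $M(z)$ is strictly positive for every $z \in \bbR$ (since $M(z) = -z > 0$ for $z<0$, and for $z \ge 0$ positivity of the $t_i(z)$ is immediate from $t_0(z)=1$ together with $M$ returning positive values). Splitting on the sign of $x$: if $x < 0$ then $x + M(x) = 0$, whereas if $x \ge 0$ then $x + M(x) > x \ge 0$. Either way the hypothesis $y \ge x + M(x)$ forces $y \ge 0$, and hence $M(y) > 0$. It follows that $y + M(y) > y \ge x + M(x)$, giving both the strict inequality in this range and the ``only if'' half of the biconditional.

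No substantial obstacle is expected here: the whole argument is a clean two-case bookkeeping on top of Lemma \ref{lem_d}(1), with the strict-inequality case falling out purely from the positivity of $M$ and hence requiring none of the more delicate termination or recursion analysis used elsewhere in this section.
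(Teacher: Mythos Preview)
Your proof is correct and follows essentially the same approach as the paper: split on whether $d=y-x$ satisfies $d<M(x)$, use Lemma~\ref{lem_d}(1) for the equality case, and invoke the positivity of $M$ for the strict case. The detour through showing $y\ge 0$ in your second case is unnecessary (positivity of $M$ holds for all real inputs, as you yourself note), but it does no harm.
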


\begin{proof}
Let $d=y-x$. If $d<M(x)$, then Lemma \ref{lem_d} implies $y+M(y) = y+M(x+d) = y+M(x) - d = x+M(x)$. Otherwise, we have $y+M(y) > y = x+d \ge x+M(x)$.
\end{proof}

Define the set
\begin{equation*}
    \tameF = \{x + M(x) : x\in\mathbb{R}\}.
\end{equation*}
Then Lemma \ref{lem_leq} implies:

\begin{corollary}\label{cor_M_zero}
$\cF'_n$ is the set of points at which $M$ tends to zero from the left:
\begin{equation*}
\cF'_n=\{x\in\bbR:\lim_{y\to x^-} M(y)=0\}.
\end{equation*}
\end{corollary}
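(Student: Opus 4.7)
The plan is to prove both set-inclusions separately, invoking one of the preceding lemmas for each direction.

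For the forward inclusion $\cF'_n \subseteq \{x:\lim_{y\to x^-} M(y)=0\}$, my approach is to apply Lemma~\ref{lem_d}(1) directly. Given $z = x_0 + M(x_0) \in \cF'_n$, setting $d = y - x_0 \in [0, M(x_0))$ yields $M(y) = M(x_0) - (y-x_0) = z - y$ for every $y \in [x_0, z)$. Hence $M(y) \to 0$ as $y \to z^-$, which is exactly the desired conclusion.

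For the reverse inclusion, I assume $\lim_{y\to z^-} M(y) = 0$, equivalently $\lim_{y\to z^-} h(y) = z$ where $h(y) := y+M(y)$, and aim to produce some $x$ with $h(x) = z$. By Lemma~\ref{lem_leq}, $h$ is non-decreasing and, whenever $x \le y < h(x)$, satisfies $h(y) = h(x)$; so $h$ is constant on each half-open ``flat interval'' of the form $[x, h(x))$, and these intervals partition $\bbR$. A preliminary step is to check that $h(y) \le z$ for every $y < z$: if instead some $y_0 < z$ had $h(y_0) > z$, then by Lemma~\ref{lem_d}(1) applied at $y_0$, $M(y) = h(y_0) - y$ on $[y_0, h(y_0))$, and letting $y \to z^-$ inside this interval would give $\lim_{y\to z^-} M(y) = h(y_0) - z > 0$, contradicting the hypothesis.

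Combining $h(y) \le z$ for $y<z$ with $h(y) \to z$, the concluding step will be to exhibit a flat interval whose right endpoint is exactly $z$; any $x$ in that interval then satisfies $z = x + M(x)$, proving $z \in \cF'_n$. The hard part will be this last step — ruling out the pathological scenario in which the right endpoints of flat intervals approach $z$ from below without any interval actually ending at $z$. I expect to resolve this by exploiting the full piecewise-constant structure of $h$ from Lemma~\ref{lem_leq} together with the precise left-limit hypothesis, perhaps via an iteration $y_{k+1} := h(y_k)$ starting below $z$ and arguing that such an iteration must in fact reach $z$ rather than converge strictly below it.
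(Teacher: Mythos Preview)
Your forward inclusion is correct and is exactly what Lemma~\ref{lem_d}(1) gives. The reverse inclusion, however, cannot be salvaged: the scenario you call ``pathological'' actually occurs, so the Corollary as stated is false. Take $n=2$ and $z=1$. For $x\in[0,1)$ one has $M(x)=M(2x-1)/2$ (since $M(x-1)=1-x$), and induction on $k\ge 0$ gives $h(x):=x+M(x)=1-2^{-k-1}$ for $x\in[1-2^{-k},\,1-2^{-k-1})$; meanwhile $M(1)=M(1/2)/2=1/8$, so $h(1)=9/8$, and $h(x)\ge 9/8$ for all $x\ge 1$ by monotonicity. Hence $1\notin\cF'_2$, yet $\lim_{y\to 1^-}M(y)=0$. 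Your proposed iteration $y_{k+1}=h(y_k)$ started at $y_0=0$ is precisely the sequence $1-2^{-k}$, which converges to $1$ without ever reaching it---so the iteration does not ``reach $z$'' as you hoped.

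What Lemma~\ref{lem_leq} actually yields is $\{x:\lim_{y\to x^-}M(y)=0\}=\overline{\cF'_n}$, the closure. One inclusion is your forward argument followed by taking limits. For the other, note that Lemma~\ref{lem_leq} forces $(y_0,h(y_0))\cap\cF'_n=\emptyset$ for every $y_0$: if $w=h(x)$ lay in this open interval with $x\le y_0$, the equality criterion (applicable since $y_0<w=h(x)$) would give $w=h(y_0)$; and if $x>y_0$, monotonicity would give $w\ge h(y_0)$; either way contradicting $w<h(y_0)$. So if some $y_0<z$ had $h(y_0)>z$, the neighbourhood $(y_0,h(y_0))\ni z$ would be disjoint from $\cF'_n$, contradicting $z\in\overline{\cF'_n}$. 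The paper's one-line ``Lemma~\ref{lem_leq} implies'' does not supply the reverse inclusion for $\cF'_n$ itself either; fortunately the sole later use (in Lemma~\ref{ot_calc}) only needs that $l^x_{n-1}$ preserves the property ``$M\to 0$ from the left,'' for which the closure version suffices.
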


\begin{lemma}\label{lem_tii}
 $$\frac{t_i(x)}{i}\ge \frac{t_{i+1}(x)}{i+1}\text{, for each $x\ge 0$ and $1\le i\le n-1$.}$$
\end{lemma}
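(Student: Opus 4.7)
The plan is to do a simple case split depending on the sign of $t_{i+1}(x)-t_i(x)$, reducing the desired inequality to the two assertions already packaged in Lemma \ref{lem_d}. Recall that for $x\ge 0$ every $t_j(x)$ is strictly positive, since $t_0(x)=1$ and for $j\ge 1$ the value $t_j(x)=M(x-t_{j-1}(x))$ is an output of $M$, which is always positive when defined.

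First I would dispose of the easy case $t_i(x)\ge t_{i+1}(x)$: there the inequality is trivial, because positivity of $t_{i+1}(x)$ gives
\[\frac{t_i(x)}{i}\ge \frac{t_{i+1}(x)}{i}\ge \frac{t_{i+1}(x)}{i+1}.\]
So the interesting case is $t_i(x)<t_{i+1}(x)$. In that case I would apply Lemma \ref{lem_d}(1) with $y=x-t_i(x)$ and $d=t_i(x)$. Note $M(y)=t_{i+1}(x)$ by definition, and the hypothesis $0\le d<M(y)$ is exactly the standing inequality $0<t_i(x)<t_{i+1}(x)$. The conclusion of Lemma \ref{lem_d}(1) then reads
\[M(x)=M(y+d)=M(y)-d=t_{i+1}(x)-t_i(x),\]
so $t_{i+1}(x)=t_i(x)+M(x)$.

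With this identity in hand, the target inequality $\frac{t_i(x)}{i}\ge\frac{t_{i+1}(x)}{i+1}$ is equivalent (after clearing denominators and cancelling $it_i(x)$) to $t_i(x)\ge iM(x)$, which is precisely Lemma \ref{lem_d}(2). That completes the argument.

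There is no real obstacle here: both inputs to the proof are already available, and the only point requiring a moment of care is that the strict inequality $d<M(y)$ needed to invoke Lemma \ref{lem_d}(1) is exactly the case hypothesis $t_i(x)<t_{i+1}(x)$, while $d\ge 0$ uses only the positivity of every $t_j(x)$ noted above.
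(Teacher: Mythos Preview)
Your proof is correct and follows essentially the same approach as the paper. The paper avoids the case split by invoking Lemma~\ref{lem_leq} to obtain the single inequality $M(x)\ge M(x-t_i(x))-t_i(x)=t_{i+1}(x)-t_i(x)$, which together with $t_i(x)/i\ge M(x)$ from Lemma~\ref{lem_d}(2) gives the claim in one line; your two cases are precisely the two cases in the proof of Lemma~\ref{lem_leq}, so you are in effect unpacking that lemma inline.
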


\begin{proof}
By Lemmas \ref{lem_d} and \ref{lem_leq} we have
\begin{equation*}
    \frac{t_i(x)}{i}\ge M(x) \ge M(x-t_i(x)) - t_i(x) = t_{i+1}(x) - t_i(x),
\end{equation*}
which implies the claim.
\end{proof}



\begin{lemma}\label{all_tame_are_fusibles}
Every element of $\tameF$ is an $n$-fusible number, meaning $\tameF\subseteq\cF_n$.
\end{lemma}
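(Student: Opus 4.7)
The plan is to proceed by induction on the depth of the recursion tree of the computation of $M_n(x)$, which is finite by Proposition \ref{M_is_total}. The base case is $x<0$, where $x+M_n(x)=x+(-x)=0\in\cF_n$ by definition of $\cF_n$.

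For the inductive step with $x\ge 0$, I would introduce the auxiliary inputs $u_i=x-t_{i-1}(x)$ for $i=1,\ldots,n$, which are exactly the arguments of the recursive calls made inside $M_n(x)$, so that $M_n(u_i)=t_i(x)$. Each such call has strictly smaller recursion depth, so the inductive hypothesis gives $u_i+M_n(u_i)\in\cF_n$ for every $i$. The heart of the argument is then the algebraic identity
\begin{equation*}
g_n(u_1+M_n(u_1),\ldots,u_n+M_n(u_n))=\frac{1}{n}\left(\sum_{i=1}^n(x-t_{i-1}(x)+t_i(x))+1\right)=x+\frac{t_n(x)}{n}=x+M_n(x),
\end{equation*}
which follows by a telescoping of the sum, together with $t_0(x)=1$ and $M_n(x)=t_n(x)/n$.

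The main obstacle is that the definition of $\cF_n$ only admits values produced by $g_n$ that \emph{strictly} exceed each of their arguments. Applying Lemma \ref{lem_leq} to $u_i\le x$ yields $u_i+M_n(u_i)\le x+M_n(x)$, but equality is possible (precisely when $x<u_i+M_n(u_i)$). I would handle this with a case split: if $u_i+M_n(u_i)=x+M_n(x)$ for some $i$, then the target element already lies in $\cF_n$ by the inductive hypothesis at $u_i$, and there is nothing further to prove; otherwise every one of the $n$ inequalities is strict, so the $g_n$-application displayed above legitimately contributes $x+M_n(x)$ as a new element of $\cF_n$. Either way, $x+M_n(x)\in\cF_n$, completing the induction.
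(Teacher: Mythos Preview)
Your proof is correct and follows essentially the same approach as the paper: both argue by induction on recursion depth, set $z_i=u_i+M_n(u_i)=x-t_{i-1}(x)+t_i(x)$, verify the telescoping identity $g_n(z_1,\ldots,z_n)=x+M_n(x)$, and invoke Lemma~\ref{lem_leq} to bound the $z_i$. You are in fact slightly more careful than the paper, which simply records $z_i\le x+M_n(x)$ and asserts $n$-fusibility without explicitly treating the equality case; your case split makes that step airtight.
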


\begin{proof}
By induction on the depth of the recursive calls in the computation of $M(x)$ we show that $x+M(x)$ is an $n$-fusible number. In the base case, when $x<0$ we have $x+M(x)=0$ which is an $n$-fusible number. Consider the case $x\ge 0$. For each $1\le i\le n$ we put $z_i= x-t_{i-1}(x)+t_i(x)$. Since $z_i=x-t_{i-1}(x)+M(x-t_{i-1}(x))$, by the induction hypothesis they are $n$-fusible numbers. By Lemma \ref{lem_leq} we have $z_i\le x+M(x)$. Finally, observe that $x+M(x)=f_n(z_1,\ldots,z_n)$, so $x+M(x)$ is $n$-fusible.
\end{proof}

All that remains is to prove that $\ot(\tameF) \ge \varphi_{n-1}(0)$.

\begin{lemma}\label{lem_tlinear}
For every $1\le i \le n$, every $x$, and every $0\le d<t_i(x)/i$ we have $t_i(x+d) = t_i(x) - di$.
\end{lemma}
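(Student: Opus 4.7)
My plan is to prove the lemma by induction on $i$, using Lemma \ref{lem_tii} to ensure the range condition is inherited from one step to the next, and Lemma \ref{lem_d}(1) to actually shift the argument of $M$ by a permissible amount.

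For the base case $i=1$, note that $t_1(x) = M(x-1)$ and $t_1(x+d) = M(x+d-1)$. The condition $d < t_1(x)/1 = M(x-1)$ lets us apply Lemma \ref{lem_d}(1) with input $x-1$ and shift $d$, obtaining $M(x-1+d) = M(x-1)-d$, which is exactly $t_1(x) - d\cdot 1$.

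For the inductive step, assume the statement holds for $i$. Given $0\le d < t_{i+1}(x)/(i+1)$, Lemma \ref{lem_tii} guarantees $d < t_{i+1}(x)/(i+1) \le t_i(x)/i$, so the induction hypothesis applies and yields $t_i(x+d) = t_i(x) - di$. Then I compute
\begin{equation*}
t_{i+1}(x+d) = M\bigl((x+d) - t_i(x+d)\bigr) = M\bigl((x - t_i(x)) + (i+1)d\bigr).
\end{equation*}
Here $M(x - t_i(x)) = t_{i+1}(x)$ by definition, and the shift $(i+1)d$ is strictly less than $t_{i+1}(x)$ by the assumption on $d$, so Lemma \ref{lem_d}(1) applied to the input $x - t_i(x)$ with shift $(i+1)d$ gives
\begin{equation*}
M\bigl((x - t_i(x)) + (i+1)d\bigr) = t_{i+1}(x) - (i+1)d,
\end{equation*}
as required.

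I do not expect any real obstacle here: the argument is essentially a clean bookkeeping step that chains Lemma \ref{lem_d}(1) through the nested calls, with Lemma \ref{lem_tii} providing exactly the monotonicity of the bounds $t_i(x)/i$ needed so that the hypothesis at level $i+1$ implies the one at level $i$. The only minor subtlety is that everything takes place at $x\ge 0$ so that the recursive formula for $t_i$ applies; the lemma should be read with that convention (consistent with the definition of the $t_i(x)$ at the start of the section).
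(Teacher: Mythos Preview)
Your proof is correct and follows essentially the same route as the paper's own proof: induction on $i$, with Lemma~\ref{lem_tii} to inherit the bound and the shift identity $M(y+d)=M(y)-d$ for $0\le d<M(y)$ to carry out the computation. The only cosmetic difference is that the paper cites Lemma~\ref{lem_leq} (whose equality case is equivalent to Lemma~\ref{lem_d}(1)) rather than Lemma~\ref{lem_d}(1) directly.
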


\begin{proof}
By induction on $i$. First let $i=1$. Since $0\le d< t_1(x) = M(x-1)$, Lemma \ref{lem_leq} implies that $t_1(x+d) = M(x+d-1) = M(x-1)-d = t_1(x) -d$.

Now let $2\le i \le n$. By Lemma \ref{lem_tii} we have $0\le d < t_i(x)/i \le t_{i-1}(x)/(i-1)$, so by applying induction on $i$ and then Lemma \ref{lem_leq}, we have $t_i(x+d) = M(x+d-t_{i-1}(x+d)) = M(x+d-t_{i-1}(x)+d(i-1))=M(x-t_{i-1}(x) + di) = M(x-t_{i-1}(x))-di=t_i(x)-di$.
\end{proof}

\begin{lemma}\label{lem_invtlinear}
    For every $1\le i \le n$, every $x$, and every $0\le d\le x$ we have $t_i(x-d) \le  t_i(x) + di$.
\end{lemma}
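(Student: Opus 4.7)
The plan is to deduce the inequality directly from Lemma \ref{lem_tlinear} by a short case analysis carried out at the base point $x-d$. No induction on $i$ will be needed; all that is used beyond Lemma \ref{lem_tlinear} is the non-negativity of the $t_i$.

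First I would record that each $t_i$ is non-negative on $[0,\infty)$: the base value $t_0\equiv 1$ is non-negative, and for $i\ge 1$ every $t_i(y)$ is a value of $M$, which is always non-negative since $M(z)=-z\ge 0$ for $z<0$ and $M$ outputs a positive value otherwise. The hypothesis $0\le d\le x$ guarantees that both $x$ and $x-d$ lie in $[0,\infty)$, so the iterates $t_i(x)$ and $t_i(x-d)$ are well defined and non-negative.

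Next I would split on whether $d<t_i(x-d)/i$. If $d\ge t_i(x-d)/i$, then $t_i(x-d)\le di$ directly, and adding the non-negative quantity $t_i(x)$ to the right-hand side gives the bound $t_i(x-d)\le t_i(x)+di$. In the remaining case $d<t_i(x-d)/i$, Lemma \ref{lem_tlinear} applies at the base point $x-d$ with increment $d$, yielding
\begin{equation*}
t_i(x)=t_i\bigl((x-d)+d\bigr)=t_i(x-d)-di,
\end{equation*}
so the claimed inequality in fact holds with equality.

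The only point that warrants a moment's care is checking that the second case hypothesis $d<t_i(x-d)/i$ is exactly what places us within the range of validity of Lemma \ref{lem_tlinear} after the relabeling $x\mapsto x-d$; once that is noted, the lemma is a one-line corollary of Lemma \ref{lem_tlinear}, and no real obstacle arises.
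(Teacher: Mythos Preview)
Your proof is correct and follows essentially the same approach as the paper: split on whether $d<t_i(x-d)/i$, apply Lemma~\ref{lem_tlinear} at the base point $x-d$ in the first case to get equality, and use $t_i(x-d)\le di$ together with $t_i(x)\ge 0$ in the second. The paper's proof is the same two-line case analysis, only more terse.
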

\begin{proof}
Indeed, if $d<t_i(x-d)/i$, then we have $t_i(x-d) =  t_i(x) + di$ by Lemma 
\ref{lem_tlinear}. Otherwise $t_i(x-d) \le  di$ and hence $t_i(x-d) <  t_i(x) + di$
\end{proof}

For $x\ge 0$ and $0<i<n$ let $U_i(x)=t_i(x)/i$. We also put $U_0(x)=+\infty$.

\begin{lemma}\label{U_and_M}
    For every $0\le i<n$ and $x\ge 0$ we have $U_i(x)>M(x)$.
\end{lemma}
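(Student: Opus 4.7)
The plan is to reduce to the single case $i = n-1$ and then argue by contradiction via a linear-piece extension. First, $U_0(x) = +\infty > M(x)$ is immediate. For $1 \le i \le n-1$, Lemma \ref{lem_tii} applied repeatedly gives the chain $U_1(x) \ge U_2(x) \ge \cdots \ge U_{n-1}(x) \ge t_n(x)/n = M(x)$, so it is enough to prove $U_{n-1}(x) > M(x)$.

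Suppose for contradiction that $t_{n-1}(x) = (n-1)M(x)$. Since $M(x - t_{n-1}(x)) = t_n(x) = nM(x)$, Lemma \ref{lem_d}(1) applied at $y = x - (n-1)M(x)$ will yield a linear description $M(s) = M(x) + x - s$ on the interval $[x - (n-1)M(x),\, x + M(x))$.

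My next step will be to propagate the equality $U_{n-1} = M$ leftward: for any $y \in [0,x]$ inside the current linear region, Lemma \ref{lem_invtlinear} gives $t_{n-1}(y) \le t_{n-1}(x) + (n-1)(x-y) = (n-1)(M(x) + x - y) = (n-1)M(y)$ via the linear formula, while Lemma \ref{lem_d}(2) supplies the opposite inequality. With $U_{n-1}(y) = M(y)$ established, I can re-run the opening argument at $y$, which extends the linear description to a strictly larger interval sharing the same right endpoint $x + M(x)$. Iterating from $y_0 = x - (n-1)M(x)$, the recurrence $y_{k+1} = ny_k - (n-1)(x + M(x))$ solves to $y_k = x - (n^{k+1} - 1)M(x)$. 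Since $M(x) > 0$ and $n \ge 2$, after finitely many steps the left endpoint becomes negative.

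The contradiction then follows: at any $s < 0$ covered by the linear description, the defining clause of $M$ gives $M(s) = -s$, while the linear formula gives $M(s) = M(x) + x - s$, and together these force $M(x) = -x$, which is impossible since $M(x) > 0$ and $x \ge 0$. I expect the main technical obstacle to be managing the iteration so that the propagation step and Lemma \ref{lem_d}(2) apply cleanly at each $y_k$; this should amount to a straightforward induction on $k$, keeping track of which prefix of the linearity region lies in $[0,\infty)$ and halting as soon as it does not.
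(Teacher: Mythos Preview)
Your proposal is correct and follows essentially the same line as the paper: reduce to $i=n-1$, assume $t_{n-1}(x)=(n-1)M(x)$, use Lemma~\ref{lem_invtlinear} together with Lemma~\ref{lem_d}(2) to show the same equality holds at $y=x-(n-1)M(x)$, and derive $M(x)=-x$ once a negative point is reached. The only cosmetic difference is packaging: the paper invokes a recursion-minimal counterexample to jump directly to the last step, whereas you unroll this into the explicit iteration $y_k = x - (n^{k+1}-1)M(x)$.
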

\begin{proof}
In view of Lemma \ref{lem_tii} it is enough to prove that $U_{n-1}(x)>M(x)$, for $x\ge 0$. Suppose for a contradiction that there exists $x$ such that $U_{n-1}(x) = M(x)$ (by Lemma \ref{lem_d} we cannot have $U_{n-1}(x)<M(x)$).  Take a recursion-minimal such $x$. Let $y=x-t_{n-1}(x) = x - (n-1)M(x)$, and note that $M(x)$ is recursively computed as $M(x)=M(y)/n$. By the recursion-minimality of $x$, we have either $y\ge 0$ and $t_{n-1}(y) > (n-1)M(y)$, or $y<0$. We cannot have $y\ge 0$ and $t_{n-1}(y)>(n-1)M(y)$, since by Lemma \ref{lem_invtlinear} we have $t_{n-1}(y) = t_{n-1}(x-(n-1)M(x)) \le t_{n-1}(x) + (n-1)^2M(x) = n(n-1)M(x) = (n-1)M(y)$. On the other hand, if $y<0$, then $nM(x)=M(y)=-y=(n-1)M(x)-x$, so $M(x)=-x\le 0$, contradicting the fact that values of $M$ are always positive.
\end{proof}

For $x\ge 0$ and $0\le i<n$ define the interval
\begin{equation*}
J_i(x) = [x-t_i(x),x+U_i(x)),
\end{equation*}
and define the linear transformation
\begin{equation*}
l_i^x(y)= \frac{ix}{i+1}+\frac{t_i(x)}{i+1}+\frac{y}{i+1}.
\end{equation*}
Note that each $l_i^x$ maps $J_i(x)$ onto $[x,x+U_i(x))$.

\begin{lemma}\label{lem_l} For any $x\ge 0$, $0\le i<n$, and $y\in J_i(x)$ we have 
$$t_{i+1}(l^x_i(y))=M(y).$$
\end{lemma}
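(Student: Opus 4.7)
The plan is to verify the identity by direct substitution. Setting $z = l^x_i(y)$ and $d = z - x$, the definition of $l^x_i$ gives
\begin{equation*}
d = \frac{y - (x - t_i(x))}{i+1},
\end{equation*}
so the two endpoint conditions defining $J_i(x) = [x - t_i(x), x + U_i(x))$ translate exactly to $0 \le d < U_i(x) = t_i(x)/i$ when $i \ge 1$; for $i = 0$ the upper bound is vacuous since $U_0(x) = +\infty$, and the lower bound $d \ge 0$ still holds since $l^x_0(y) = 1+y \ge x$ for $y \ge x-1$.

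Next I would invoke Lemma \ref{lem_tlinear} to conclude $t_i(z) = t_i(x + d) = t_i(x) - id$. For $i = 0$ the same identity holds trivially since $t_0 \equiv 1$. Using $(i+1)z = ix + t_i(x) + y$, one then computes
\begin{equation*}
z - t_i(z) = (i+1)z - ix - t_i(x) = y.
\end{equation*}
Since $z \ge x \ge 0$, the recursive clause in the definition of $M$ applies at $z$, and hence $t_{i+1}(z) = M(z - t_i(z)) = M(y)$, as desired.

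The argument is essentially a one-line computation. The substantive observation is that the slope $1/(i+1)$ of $l^x_i$ is precisely the value that cancels the $-id$ correction from Lemma \ref{lem_tlinear}, so that the recursion step $z \mapsto z - t_i(z)$ pulls back through $l^x_i$ to the identity on the source interval $J_i(x)$. The only delicate step is checking the endpoint conditions so that Lemma \ref{lem_tlinear} can be applied; the case $i = 0$ must be handled separately only because Lemma \ref{lem_tlinear} is stated for $i \ge 1$, but it presents no real difficulty since $t_0$ is constant.
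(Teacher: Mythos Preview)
Your proof is correct and follows essentially the same approach as the paper's: separate the case $i=0$ (where $t_0$ is constant), and for $i\ge 1$ verify the hypothesis of Lemma~\ref{lem_tlinear} at the shifted point, then unwind the recursion $t_{i+1}(z)=M(z-t_i(z))$ to get $M(y)$. The only difference is notational---the paper parameterizes by $d=y-(x-t_i(x))$ whereas you use $d=l^x_i(y)-x$, which is the same quantity divided by $i+1$---but the computations are identical in substance.
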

\begin{proof}
If $i=0$ then
$$t_1(l^x_0(y))=t_1(y+1)=M(y+1-t_0(y+1))=M(y+1-1)=M(y).$$

Now assume that $i>0$. Let $x_0=x-t_i(x)$ and $d=y-x_0$. Thus
$$l_i^x(y)=\frac{i(x_0+t_i(x))}{i+1}+\frac{t_i(x)}{i+1}+\frac{x_0+d}{i+1}= x_0+t_i(x)+\frac{d}{i+1}.$$
Notice that $$d/(i+1)=\frac{y-x_0}{i+1}=\frac{y-x+t_i(x)}{i+1}<\frac{x+t_i(x)/i-x+t_i(x)}{i+1}=t_i(x)/i.$$ 
Hence by Lemma \ref{lem_tlinear} $$t_i(l_i^x(y))=t_i\Bigl(x_0+t_i(x)+\frac{d}{i+1}\Bigr)=t_i\Bigl(x+\frac{d}{i+1}\Bigr)=t_i(x)-\frac{id}{i+1}.$$
Therefore
\begin{multline*}
t_{i+1}(l^x_i(y))=M(l^x_i(y)-t_i(l^x_i(y)))=M\Bigl(x_0+t_i(x)+\frac{d}{i+1}-t_i(x)+\frac{id}{i+1}\Bigr)\\=M(x_0+d)=M(y).
\end{multline*}
\end{proof}

For $x\in\mathbb{R}$, denote $o_{\mathit{lim}}(x)=\inf_{\varepsilon>0} \ot(\cF_n'\cap(x-\eps,x))$. Also denote $o_{\mathit{lim}}(+\infty)=\inf_{x>0} \ot(\cF_n'\cap[x,+\infty))$. Hence, if $x$ is not a limit point of $\cF_n'$ then $o_{\mathit{lim}}(x)=0$, while if $x$ is a limit point of $\cF_n'$ and $\ot(\cF_n'\cap(-\infty,x)) = \omega^{\alpha_1} + \cdots + \omega^{\alpha_k}$ in Cantor Normal Form, then $o_{\mathit{lim}}(x)=\omega^{\alpha_k}$.

For an ordinal $\alpha$ and $i\ge 0$ let $\eta_i(\alpha)$ be the least limit ordinal of the form $\varphi_i(\beta)$, such that $\varphi_i(\beta)>\alpha$.

\begin{lemma} \label{ot_calc} For any $0\le i<n$ and $x\ge 0$ we have 
$$o_{\mathit{lim}}(x+U_i(x))\ge \eta_{n-i-1}(o_{\mathit{lim}}(x)).$$\end{lemma}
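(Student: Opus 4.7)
The plan is to prove Lemma~\ref{ot_calc} by induction on $m := n-i-1 \ge 0$. The main tool is the affine map $l_i^x$: I would set $x_0 := x$ and $x_{k+1} := l_i^{x_k}(x_k) = x_k + t_i(x_k)/(i+1)$. A short calculation via Lemma~\ref{lem_tlinear} gives $t_i(x_{k+1}) = t_i(x_k)/(i+1)$, hence $x_{k+1}+U_i(x_{k+1}) = \ell_i := x+U_i(x)$ and $x_{k+1}-t_i(x_{k+1}) = x_k$, so $x_k \nearrow \ell_i$ geometrically (and $x_k \to +\infty$ when $i=0$). For each $k \ge 1$, $l_i^{x_k}$ is an order-preserving affine bijection $[x_{k-1}, \ell_i) \to [x_k, \ell_i)$ with slope $1/(i+1)$ that carries $[x_{k-1}, x_k)$ onto $[x_k, x_{k+1})$. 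Combining Lemmas~\ref{lem_l} and \ref{lem_tii} gives $M(l_i^{x_k}(y)) \le M(y)/(i+1)$, so $l_i^{x_k}$ restricts to an order-preserving embedding of $\cF'_n$ into itself.

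For the base case $m=0$ (so $i = n-1$), set $\beta_k := \ot(\cF'_n \cap [x_k, x_{k+1}))$. The $l_{n-1}^{x_k}$-embeddings give $\beta_k \ge \beta_{k-1}$, and applying $l_{n-1}^x$ to $\cF'_n \cap [x-t_{n-1}(x), x)$ gives $\beta_0 \ge o_{\mathit{lim}}(x) = \alpha$. Since every left-neighborhood $(\ell_i - \eps, \ell_i)$ contains $\bigsqcup_{k \ge K_\eps}[x_k,x_{k+1})$ for $K_\eps$ sufficiently large, we get $o_{\mathit{lim}}(\ell_i) \ge \alpha \cdot \omega = \eta_0(\alpha)$.

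For the inductive step ($m \ge 1$), assume the lemma for $i+1$. The key observation is that for each $y \in \cF'_n \cap [x_{k-1}, x_k)$ whose $\cF'_n$-successor $y^+ = y + M(y)$ satisfies $y^+ \le x_k$ --- which, by Lemma~\ref{lem_d} and the sawtooth shape of $M$, holds for all $y$ close enough to $x_k^-$ whenever $x_k$ is a left-limit of $\cF'_n$ --- set $z_y := l_i^{x_k}(y) \in \cF'_n$ and $p_y := z_y + U_{i+1}(z_y)$. By Lemma~\ref{lem_l}, $U_{i+1}(z_y) = M(y)/(i+1)$, and one computes $p_y = x_{k+1} + (y+M(y)-x_k)/(i+1) \le x_{k+1}$ with $p_y \to x_{k+1}^-$ as $y \to x_k^-$. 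The inductive hypothesis at $z_y$ yields $o_{\mathit{lim}}(p_y) \ge \eta_{m-1}(o_{\mathit{lim}}(z_y)) \ge \eta_{m-1}(o_{\mathit{lim}}(y))$, the second inequality using that $l_i^{x_k}$ is an order-preserving $\cF'_n$-embedding. Summing the left-tails contributed at each $p_y$ cofinally in $\cF'_n \cap (x_k-\eta, x_k)$ yields the aggregation bound $o_{\mathit{lim}}(x_{k+1}) \ge \sum_{y} \eta_{m-1}(o_{\mathit{lim}}(y))$.

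Iterating in $k$, the sequence $\alpha_k := o_{\mathit{lim}}(x_k)$ grows via this aggregation. The main obstacle is the ordinal-arithmetic step of identifying the aggregated sum as implementing a full $\varphi_{m-1}$-iterate on $\alpha_k$ (rather than just the $\eta_{m-1}$-increment that the inductive hypothesis directly delivers); this rests on a careful cofinality analysis inside the well-ordered set $\cF'_n \cap (x_k-\eta, x_k)$, exploiting that its $o_{\mathit{lim}}(y)$-values realize a scale of ordinals compatible with the Veblen hierarchy. Assuming this is established, $\sup_k \alpha_k$ is a $\varphi_{m-1}$-fixed point exceeding $\alpha$, hence at least $\eta_m(\alpha) = \varphi_m(\beta)$ for the least $\beta$ with $\varphi_m(\beta) > \alpha$; combined with $o_{\mathit{lim}}(\ell_i) \ge \sup_k o_{\mathit{lim}}(x_k)$, the lemma follows. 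If the $\omega$-iteration proves insufficient, a natural fallback is to iterate transfinitely along $U_{i+1}$-steps $x_{\beta+1} = x_\beta + U_{i+1}(x_\beta)$, using the inductive hypothesis at each successor and continuity at limits, so that the $\varphi_{m-1}$-closure is forced by construction.
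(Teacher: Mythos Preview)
Your setup and base case are essentially what the paper does: the recursion $x_{k+1}=l_i^{x_k}(x_k)$ with $t_i(x_{k+1})=t_i(x_k)/(i+1)$, the common right endpoint $\ell_i=x+U_i(x)$, and the fact that $l_i^{x_k}$ carries $\cF'_n$ into $\cF'_n$ are all correct (the paper iterates the single map $l_{n-1}^x$ rather than $l_{n-1}^{x_k}$, but the resulting sequence of points is the same). One small correction: $\alpha\cdot\omega=\eta_0(\alpha)$ fails for $\alpha=0$; you need the extra observation (via Lemma~\ref{U_and_M}, since $y+M(y)<y+U_i(y)=\ell_i$ for $y\in[x,\ell_i)$) that $\ell_i$ is always a limit point of $\cF'_n$, so that $o_{\mathit{lim}}(\ell_i)\ge\omega$ in any case.

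The inductive step, however, has a genuine gap exactly at the point you yourself flag as ``the main obstacle.'' Your aggregation bound $o_{\mathit{lim}}(x_{k+1})\ge\sum_y\eta_{m-1}(o_{\mathit{lim}}(y))$ over a cofinal family of $y$ in $\cF'_n\cap(x_k-\eta,x_k)$ cannot produce $\varphi_{m-1}(\alpha_k)$: each $o_{\mathit{lim}}(y)$ is strictly below $\alpha_k$ and the index set has order type at most $\alpha_k$, so the whole sum is bounded by $\eta_{m-1}(\alpha_k)\cdot\alpha_k$, far short of $\varphi_{m-1}(\alpha_k)$. The missing mechanism is a recursive feedback: once $o_{\mathit{lim}}$ is known to be large at an intermediate point $p_y$, that value must itself be fed back as the base for the next $U_{i+1}$-step, and this has to be done transfinitely, not in $\omega$ many macro-steps $k$.

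Your fallback is the right idea and is essentially what the paper does; what is missing is the parameterization that makes it go through. The paper enumerates $\overline{\cF}{}'_n\cap J_i(x)$ as $\langle a_\alpha\mid\alpha<\Lambda\rangle$ and proves by transfinite induction on $\alpha$ that $o_{\mathit{lim}}(l_i^x(a_{1+\alpha}))\ge\varphi_{m-1}(\alpha)$. The enabling identity is
\[
l_i^x(a_{1+\alpha})=\sup_{\beta<1+\alpha}\bigl(l_i^x(a_\beta)+U_{i+1}(l_i^x(a_\beta))\bigr),
\]
which comes directly from $U_{i+1}(l_i^x(y))=M(y)/(i+1)$ (Lemma~\ref{lem_l}) together with $a_{1+\alpha}=\sup_{\beta<1+\alpha}(a_\beta+M(a_\beta))$. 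In other words, your proposed transfinite $U_{i+1}$-iteration is \emph{exactly} the image under $l_i^x$ of the successor structure of $\cF'_n$ on $J_i(x)$; this is what guarantees simultaneously that the iterates are cofinal in $[x,\ell_i)$ and that the inner induction hypothesis applies at every stage, giving
\[
o_{\mathit{lim}}(l_i^x(a_{1+\alpha}))\ge\sup\bigl(\{\eta_{m-1}(0)\}\cup\{\eta_{m-1}(\varphi_{m-1}(\beta)):\beta<\alpha\}\bigr)=\varphi_{m-1}(\alpha).
\]
One then concludes that $\Lambda$ is a $\varphi_{m-1}$-fixed point exceeding $o_{\mathit{lim}}(x)$, hence $\Lambda\ge\eta_m(o_{\mathit{lim}}(x))$, and since $\ell_i$ is a limit point of $\cF'_n$, $o_{\mathit{lim}}(\ell_i)=\Lambda$. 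So the key idea you are missing is to index the induction by the ordinal position in $\overline{\cF}{}'_n\cap J_i(x)$ rather than by a natural number $k$; once this is done the recursive feedback needed to climb the $\varphi_{m-1}$-hierarchy is automatic, and no separate ``aggregation'' or ``cofinality analysis'' is required.
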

\begin{proof}
We prove by induction on $0\le i<n$ in decreasing order that the lemma holds for all $x\ge 0$.


Since $\cF_n'$ is well-ordered its closure $\overline{\cF}_n'$ is also well-ordered. Let $\Lambda = \ot(\overline{\cF}_n'\cap J_i(x))$, and let $\langle a_\alpha\mid \alpha <\Lambda\rangle$ be an enumeration of elements of $\overline{\cF}_n'\cap J_i(x)$ in increasing order. Observe that by Lemma \ref{lem_tlinear}, for any $x\le y<x+U_i(x)$ we have $y+U_i(y)=x+U_i(x)$. Since by Lemma \ref{U_and_M} $y+M(y)<y+U_i(y)$ we see that there are elements of $\cF'_n$ in any left neighbourhood of $x+U_i(x)$. In other words, $x+U_i(x)$ is a limit point of $\cF'_n$. In particular this implies that both $\Lambda$ and $o_{\mathit{lim}}(x+U_i(x))$ are limit ordinals. 

In the case of $i=n-1$, by Lemma \ref{lem_l} we have 
\begin{equation}\label{M_iso}M(l^x_{n-1}(y))=\frac{M(y)}{n}\text{, for $y\in J_{n-1}(x)$}.\end{equation}
Therefore, Corollary \ref{cor_M_zero} implies that, for every $y\in J_{n-1}(x)$, we have $y\in \cF'_n$ iff $l^x_{n-1}(y)\in \cF'_n$. Consider the points $x_0=x$, $x_{j+1}=l_{n-1}^x(x_j)$. We have $o_{\mathit{lim}}(x_{j+1})=o_{\mathit{lim}}(x_{j})$. Since $\lim_{j\to\infty} x_j= x+U_{n-1}(x)$, we conclude that $o_{\mathit{lim}}(x+U_{n-1}(x))\ge o_{\mathit{lim}}(x)\omega$. In general, for every ordinal $\alpha>0$, the ordinal $\alpha\omega$ is a power of $\omega$, so $\alpha\omega \ge \eta_0(\alpha)$. Hence, in our case
$$o_{\mathit{lim}}(x+U_{n-1}(x))\ge o_{\mathit{lim}}(x)\omega\ge\eta_0(o_{\mathit{lim}}(x)).$$

Now consider the case of $i<n-1$. By Lemma \ref{lem_l} we have 
\begin{equation}\label{U_M_iso}U_{i+1}(l^x_{i}(y))=\frac{M(y)}{i+1}\text{, for $y\in J_i(x)$}.\end{equation}

We prove by induction on $\alpha$ that \begin{equation}\label{o_lim_alpha} o_{\mathit{lim}}(l_i^x(a_{1+\alpha}))\ge  \varphi_{n-i-2}(\alpha).\end{equation} Clearly $$a_{1+\alpha}=\sup \{a_{\beta}+M(a_\beta) \mid \beta<1+\alpha\}.$$ Hence by (\ref{U_M_iso}) we have $$\begin{aligned} l_i^x(a_{1+\alpha})&=&\sup \{l_i^x(a_{\beta}+M(a_\beta)) \mid \beta<1+\alpha\}\\ &=& \sup \{l_i^x(a_{\beta})+\frac{M(a_\beta)}{i+1} \mid \beta<1+\alpha\} \\ &=& \sup \{l_i^x(a_{\beta})+U_{i+1}(l_i^x(a_\beta)) \mid \beta<1+\alpha\}.\end{aligned}$$ Therefore
\begin{equation}\label{eq_olim_temp}
o_{\mathit{lim}}(l_i^x(a_{1+\alpha}))\ge \sup \{o_{\mathit{lim}}(l_i^x(a_{\beta})+U_{i+1}(l_i^x(a_\beta))) \mid \beta<1+\alpha\}.
\end{equation}
By the induction assumption on $i$ we have $$o_{\mathit{lim}}(l_i^x(a_{\beta})+U_{i+1}(l_i^x(a_\beta))) \ge \eta_{n-i-2}(o_{\mathit{lim}}(l_i^x(a_\beta))).$$ Furthermore, by induction on $\alpha$ we have $o_{\mathit{lim}}(l_i^x(a_{1+\beta}))\ge  \varphi_{n-i-2}(\beta)$ for every $\beta<\alpha$. Substituting into (\ref{eq_olim_temp}), we obtain
 $$o_{\mathit{lim}}(l_i^x(a_{1+\alpha}))\ge\sup (\{\eta_{n-i-2}(0)\}\cup \{\eta_{n-i-2}(\varphi_{n-i-2}(\beta))\mid \beta<\alpha\})=\varphi_{n-i-2}(\alpha),$$ completing the proof of (\ref{o_lim_alpha}).

Now we use (\ref{o_lim_alpha}) to show that $\Lambda$ is a fixed point of $\varphi_{n-i-2}$. Since $\varphi_{n-i-2}$ is a normal function and $\Lambda$ is a limit ordinal, it is enough to show that for any $\alpha<\Lambda$ we have $\varphi_{n-i-2}(\alpha)\le\Lambda$. Consider $\alpha<\Lambda$. Since $\Lambda\ge \omega$,  we have $1+\alpha<\Lambda$. And since $x<l_i^x(a_{1+\alpha})<x+U_i(x)$, we conclude that $\Lambda>o_{\mathit{lim}}(l_i^x(a_{1+\alpha}))$. Thus by (\ref{o_lim_alpha}) we have $\Lambda\ge\varphi_{n-i-2}(\alpha)$. 

Given that $\Lambda>o_{\mathit{lim}}(x)$ and is a fixed point of $\varphi_{n-i-2}$ we conclude that $\Lambda\ge \eta_{n-i-1}(o_{\mathit{lim}}(x))$. Since $x+U_i(x)$ is a limit point of $\cF'_n$, we have $o_{\mathit{lim}}(x+U_i(x))=\Lambda$. Thus $o_{\mathit{lim}}(x+U_i(x))\ge\eta_{n-i-1}(o_{\mathit{lim}}(x))$.\end{proof}

Applying Lemma \ref{ot_calc} in the case of $i=0$ we see that $$\ot(\cF'_n)\ge o_{\mathit{lim}}(U_0(0))\ge \eta_{n-1}(0)=\varphi_{n-1}(0).$$
Thus taking into account Theorem \ref{tame_bound} and Lemma \ref{all_tame_are_fusibles} we have
$$\varphi_{n-1}(0)\ge \ot(\cF_n)\ge \ot(\cF'_n)\ge \varphi_{n-1}(0),$$
as desired. 
 
 \begin{remark}Theorem \ref{thm_M} implies that the statement ``for all $n$ the set of rationals $\mathcal{F}_n$ is well-ordered'' is not provable in systems of second-order arithmetic whose proof-theoretic ordinal is  $\le\varphi_\omega(0)$. Natural examples of systems with proof-theoretic strength $\varphi_\omega(0)$ are $\Delta^1_1\textsf{-CR}$ \cite{feferman64} (see also \cite{sep-proof-theory}) and  $\Pi^1_1\textsf{-BI}_0$ (it is fairly easy to show that $|\Pi^1_1\textsf{-BI}_0|=\varphi_\omega(0)$ using \cite[Main~Theorem]{jaeger_strahm99} and \cite[Lemma 2.10 and Theorem 5.11]{PW22}). It is natural to conjecture that for any fixed natural $n$ these two systems are capable of showing that the set $\mathcal{F}_n$ is well-ordered. Unfortunately, our current proof of Theorem \ref{thm_upper} relies on the fact that the set under consideration is already well-ordered (a fact that we prove using Kruskal's tree theorem that is outside of the reach of the systems of this proof-theoretic strength).  However, it might be possible to make a proof that is formalizable in $\Delta^1_1\textsf{-CR}$ and $\Pi^1_1\textsf{-BI}_0$. For example, one potential route would be to prove the well-foundedness of $\mathcal{F}_n$ by giving a recursive embedding of the set into the standard ordinal notation system for $\varphi_{n-1}(0)$.
\end{remark}

\section{Functions generating sets of high order type}
\label{sec_cont}
In this section we prove Theorem \ref{thm_cont}. The first step is to build a suitable function on ordinals:

\begin{lemma}\label{lem_phi-bar-star}
Let $n\ge 3$, and let $\Lambda=\varphi(1,\underbrace{0,\ldots,0}\limits_{\mbox{\footnotesize $n$ times}})$. Then there exists a bijection $\bar{\varphi}^\star:\Lambda^n\to\Lambda\setminus\{0\}$ that is monotone (meaning, $\bar{\varphi}^\star(\alpha_1,\ldots,\alpha_n)\le\bar{\varphi}^\star(\beta_1,\ldots,\beta_n)$ whenever $\alpha_i\le \beta_i$ for all $1\le i\le n$), and such that $\bar{\varphi}^\star(\alpha_1,\ldots,\alpha_n)>\max\{\alpha_1,\ldots,\alpha_n\}$.
\end{lemma}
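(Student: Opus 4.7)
The plan is to construct $\bar\varphi^\star$ as a maximum linearization of the componentwise order on $\Lambda^n$, after first confirming that $\Lambda$ has the closure properties needed to make such a linearization land inside $\Lambda$ itself. The first step is to apply Proposition~\ref{prop_veblen_equiv} to the argument tuple $(1,0,\ldots,0)$ to conclude that $\Lambda$ is exactly the least ordinal closed under the $n$-ary Veblen function $\varphi$: only the first-coordinate constraint $\alpha_1' < 1$ is nonvacuous, and after absorbing the leading zero this becomes closure of $\Lambda$ under $\varphi$ on tuples with all arguments below $\Lambda$. Since $\varphi$ dominates $\oplus$, $\otimes$, and $\omega$-exponentiation below $\Lambda$, the ordinal $\Lambda$ is strongly critical; in particular $\Lambda^{\otimes n} = \Lambda$.

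With this closure established I would invoke the theorem of de Jongh and Parikh for the wpo $(\Lambda^n,\le_{\mathrm{comp}})$: its associated ordinal is $\Lambda^{\otimes n} = \Lambda$, so there exists a linear extension $\le^\star$ of $\le_{\mathrm{comp}}$ whose order type is exactly $\Lambda$. Setting $\bar\varphi^\star(\vec\alpha)$ to the $\le^\star$-position of $\vec\alpha$, reindexed onto $\Lambda\setminus\{0\}$ rather than $\Lambda$, automatically yields a monotone bijection $\Lambda^n \to \Lambda\setminus\{0\}$.

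For the dominance $\bar\varphi^\star(\vec\alpha) > \max_i \alpha_i$, write $\gamma = \max_i \alpha_i$. The monotone chain $(0,\ldots,0) <_{\mathrm{comp}} (1,0,\ldots,0) <_{\mathrm{comp}} \cdots <_{\mathrm{comp}} (\gamma,0,\ldots,0) \le_{\mathrm{comp}} \vec\alpha$ forces the $\le^\star$-position of $\vec\alpha$ to be at least $\gamma$; combined with the reindexing this yields strict dominance when $\gamma$ is a successor or zero. When $\gamma$ is a limit the naive lower bound is only $\ge \gamma$, so I would exploit the hypothesis $n \ge 3$: with at least three coordinates there are many tuples $(\beta_1,\ldots,\beta_n)$ with $\max_i \beta_i < \gamma$ and two or more nonzero coordinates that are incomparable to $\vec\alpha$, and these can all be placed before $\vec\alpha$ in a maximum linearization, forcing its position to strictly exceed $\gamma$.

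The main obstacle will be the combinatorial verification of dominance at limit $\gamma$: explicitly choosing $\le^\star$ so that it demonstrably inserts enough incomparable tuples before each $\vec\alpha$ to produce strict inequality uniformly, not merely in aggregate. A cleaner alternative route would be to build $\bar\varphi^\star$ directly from the Veblen normal form of ordinals below $\Lambda$: each nonzero $\xi < \Lambda$ decomposes uniquely as $\xi = \varphi(\vec\eta^{(1)}) + \cdots + \varphi(\vec\eta^{(k)})$, and one can arrange such decompositions into $n$-tuples of elements of $\Lambda$ via a monotone coding, with dominance built in by construction. Either approach requires ordinal recursion with careful bookkeeping at limit stages.
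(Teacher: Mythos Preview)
Your main approach has a fatal arithmetical error: the identity $\Lambda^{\otimes n}=\Lambda$ is false. In fact no ordinal $\alpha>1$ satisfies $\alpha\otimes\alpha=\alpha$, because $\alpha\otimes\alpha\ge\alpha\cdot\alpha\ge\alpha\cdot 2>\alpha$. Concretely, since $\Lambda=\omega^{\Lambda}$ one computes $\Lambda\otimes\Lambda=\omega^{\Lambda\oplus\Lambda}=\omega^{\Lambda\cdot 2}>\Lambda$. You may be conflating the (true) closure statement ``$\alpha,\beta<\Lambda\Rightarrow\alpha\otimes\beta<\Lambda$'' with the (false) fixed-point statement ``$\Lambda\otimes\Lambda=\Lambda$''; only the former follows from strong criticality. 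Consequently the de Jongh--Parikh linearization of $(\Lambda^n,\le_{\mathrm{comp}})$ has order type strictly larger than $\Lambda$, and the position map it produces is a bijection onto the wrong ordinal. There is no way to repair this by reindexing, so the whole linearization strategy collapses before one even reaches the dominance issue you flagged.

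For comparison, the paper does essentially what you sketch as your ``cleaner alternative route''. It first defines a fixed-point-free variant $\bar\varphi$ of the $n$-ary Veblen function (by enumerating $C(\alpha_1,\ldots,\alpha_{n-1})$ with the fixed points and the arguments themselves removed), so that $\bar\varphi(\vec\alpha)>\max_i\alpha_i$ always holds. It then lets $A$ be the set of closed terms built from $0$ and $\bar\varphi$, well-ordered by their ordinal values, and defines $\bar\varphi^\star(\alpha_1,\ldots,\alpha_n)$ to be the position in $A$ of the term $\bar\varphi(t_1,\ldots,t_n)$, where $t_i$ is the term at position $\alpha_i$. Monotonicity and dominance are inherited from a comparison lemma for $\bar\varphi$ that transfers verbatim to $\bar\varphi^\star$; bijectivity onto $\Lambda\setminus\{0\}$ is automatic because every nonzero term has a unique outermost decomposition. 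The identification $\mathrm{ot}(A)=\Lambda$ is then checked by sandwiching $\bar\varphi^\star$ between $\varphi$ and $\bar\varphi$. The point is that this construction is \emph{not} a linearization of the componentwise order on $\Lambda^n$: many componentwise-incomparable tuples get mapped to the same ordinal under $\bar\varphi$ and are discarded before one counts positions, which is exactly what lets the order type drop back down to $\Lambda$.
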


Transfinite induction then implies that every ordinal in $\Lambda$ can be constructed by starting from $0$ and repeatedly applying $\bar{\varphi}^\star$ on previously constructed ordinals. 

The idea of our construction of $\bar\varphi^\star$ is to start from a fixed-point free variant $\bar\varphi$ of the $n$-ary Veblen function, and then contracting its range to remove gaps. To analyze the resulting function $\bar\varphi^\star$, we make use of a simple comparison criterion (Proposition \ref{Veblen_star_comp} below) that exactly mirrors the criterion for $\bar \varphi$ (Lemma \ref{Veblen_comp} below). This comparison criterion makes our construction very similar to the tree-based ordinal notation by Jervell \cite{jervell_CiE,jervell_phil}, though Jervell only provides a rough proof sketch of the connection between his tree-based notation and the standard Veblen function $\varphi$. We also note that a somewhat different function generating $\Lambda$ was constructed by Schmidt \cite[Theorem~4.8]{Sch20}.

\begin{proof}[Proof of Lemma~\ref{lem_phi-bar-star}]

First for $n\ge 2$ we define a fixed-point-free variant $\bar\varphi$ of the $n$-ary Veblen function. Let $$C(\alpha_1,\ldots,\alpha_{n-1})=\{\varphi(\alpha_1,\ldots,\alpha_{n-1},\beta)\mid \beta\in\mathsf{On}\}.$$
Let
\begin{multline*}
C^{-}(\alpha_1,\ldots,\alpha_{n-2},\alpha_{n-1})=\\C(\alpha_1,\ldots,\alpha_{n-2},\alpha_{n-1})\setminus (C(\alpha_1,\ldots,\alpha_{n-2},\alpha_{n-1}+1)\cup \{\alpha_1,\ldots,\alpha_{n-1}\}).
\end{multline*}
The fixed-point free $n$-ary Veblen function $\bar{\varphi}\colon  \mathsf{On}^n\to\mathsf{On}$ is defined as follows
$$\bar{\varphi}(\alpha_1,\ldots,\alpha_n)=\text{``$\alpha_n$-th element of $C^-(\alpha_1,\ldots,\alpha_{n-2},\alpha_{n-1})$''}.$$

\begin{lemma}\label{Veblen_comp}
For any ordinals $\alpha_1,\ldots,\alpha_n,\beta_1,\ldots,\beta_n$ we have $\bar{\varphi}(\alpha_1,\ldots,\alpha_n)<\bar{\varphi}(\beta_1,\ldots,\beta_n)$ if and only if at least one of the following conditions holds:
\begin{enumerate}
    \item for some $1\le i\le n$ we have $\bar{\varphi}(\alpha_1,\ldots,\alpha_n)\le \beta_i$;
    \item  for some $1\le i\le n$ we have $\alpha_1=\beta_1$, $\ldots$, $\alpha_{i-1}=\beta_{i-1}$, $\alpha_{i}<\beta_i$, and $\alpha_{i+1},\ldots,\alpha_n<\bar{\varphi}(\beta_1,\ldots,\beta_n)$.
\end{enumerate}
\end{lemma}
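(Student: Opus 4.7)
The plan is to prove the biconditional by first establishing an auxiliary growth estimate, then handling each direction separately. The growth estimate is that $\bar\varphi(\gamma_1,\ldots,\gamma_n)>\gamma_i$ for every $1\le i\le n$: for $i<n$ this is immediate, since $\bar\varphi(\gamma_1,\ldots,\gamma_n)\in C(\gamma_1,\ldots,\gamma_{n-1})$ dominates $\gamma_i$ by Proposition~\ref{prop_veblen_equiv} while the explicit exclusion in $C^-$ rules out equality; for $i=n$, one uses that every element of $C$ is additively principal (of the form $\omega^\epsilon$, again by Proposition~\ref{prop_veblen_equiv}) and that the excised fixed points of $\zeta\mapsto\varphi(\gamma_1,\ldots,\gamma_{n-1},\zeta)$ are cofinal, which forces the $\gamma_n$-th element of $C^-$ to overshoot $\gamma_n$.

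For the $(\Leftarrow)$ direction, I would write $\bar\varphi(\alpha_1,\ldots,\alpha_n)=\varphi(\alpha_1,\ldots,\alpha_{n-1},\delta_\alpha)$ and $\bar\varphi(\beta_1,\ldots,\beta_n)=\varphi(\beta_1,\ldots,\beta_{n-1},\delta_\beta)$. Case (1) follows at once from the growth estimate applied to $(\beta_1,\ldots,\beta_n)$. In case (2) with $i=n$ both values are enumerated from the same $C^-(\beta_1,\ldots,\beta_{n-1})$ at positions $\alpha_n<\beta_n$, so the strict inequality is immediate. The crucial subcase is (2) with $i<n$. There I would apply Proposition~\ref{prop_veblen_equiv} to $\bar\varphi(\beta_1,\ldots,\beta_n)$, choosing $k=i$, $\alpha'_i=\alpha_i<\beta_i$, $\alpha'_{i+1}=\alpha_{i+1},\ldots,\alpha'_{n-1}=\alpha_{n-1}$, and letting $\alpha'_n=\zeta$ vary over $\zeta<\bar\varphi(\beta_1,\ldots,\beta_n)$. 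This yields $\varphi(\alpha_1,\ldots,\alpha_{n-1},\zeta)<\bar\varphi(\beta_1,\ldots,\beta_n)$ for every such $\zeta$, and by continuity of $\zeta\mapsto\varphi(\alpha_1,\ldots,\alpha_{n-1},\zeta)$ the ordinal $\bar\varphi(\beta_1,\ldots,\beta_n)$ is itself a fixed point of this normal map, hence lies in $C(\alpha_1,\ldots,\alpha_{n-2},\alpha_{n-1}+1)$. Consequently $\zeta\mapsto\varphi(\alpha_1,\ldots,\alpha_{n-1},\zeta)$ is an order-isomorphism between $[0,\bar\varphi(\beta_1,\ldots,\beta_n))$ and $C(\alpha_1,\ldots,\alpha_{n-1})\cap[0,\bar\varphi(\beta_1,\ldots,\beta_n))$. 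Using additive principality of $\bar\varphi(\beta_1,\ldots,\beta_n)$, I would then argue that removing the fixed-point subset together with the finite set $\{\alpha_1,\ldots,\alpha_{n-1}\}$ leaves $C^-(\alpha_1,\ldots,\alpha_{n-1})\cap[0,\bar\varphi(\beta_1,\ldots,\beta_n))$ with order type $\bar\varphi(\beta_1,\ldots,\beta_n)$; since $\alpha_n<\bar\varphi(\beta_1,\ldots,\beta_n)$, the $\alpha_n$-th element of $C^-$ lies strictly below $\bar\varphi(\beta_1,\ldots,\beta_n)$, which is exactly $\bar\varphi(\alpha_1,\ldots,\alpha_n)<\bar\varphi(\beta_1,\ldots,\beta_n)$.

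For the $(\Rightarrow)$ direction I would use trichotomy relative to the already-proved $(\Leftarrow)$. Assume $\bar\varphi(\alpha_1,\ldots,\alpha_n)<\bar\varphi(\beta_1,\ldots,\beta_n)$ but that neither (1) nor (2) holds, and let $i$ be the least coordinate with $\alpha_i\ne\beta_i$. If $\alpha_i>\beta_i$, then condition (2) for the reversed pair at index $i$ is satisfied, with the required bounds $\beta_j<\bar\varphi(\alpha_1,\ldots,\alpha_n)$ coming from the negation of (1). If $\alpha_i<\beta_i$, then the failure of (2) at index $i$ for $(\alpha_1,\ldots,\alpha_n)$ against $(\beta_1,\ldots,\beta_n)$ forces $\alpha_j\ge\bar\varphi(\beta_1,\ldots,\beta_n)$ for some $j>i$, so condition (1) for the reversed pair holds. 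In either case, $(\Leftarrow)$ applied to the reversed pair yields the contradictory $\bar\varphi(\beta_1,\ldots,\beta_n)<\bar\varphi(\alpha_1,\ldots,\alpha_n)$.

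The main obstacle is the order-type computation in the $i<n$ subcase of $(\Leftarrow)$: one must verify that removing the cofinally distributed fixed-point set from the initial segment of $C$ below $\bar\varphi(\beta_1,\ldots,\beta_n)$ does not shrink its order type past $\alpha_n$. The additive principality of $\bar\varphi(\beta_1,\ldots,\beta_n)$ is the crucial ingredient, and in the boundary situation where $\bar\varphi(\beta_1,\ldots,\beta_n)$ happens to be a higher-order fixed point of the fixed-point-enumeration function, the analysis has to be iterated one level up the Veblen hierarchy.
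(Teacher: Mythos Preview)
Your proposal is correct, and the growth estimate and the $(\Rightarrow)$ trichotomy argument match the paper's proof essentially verbatim. The genuine divergence is in the $(\Leftarrow)$ direction, case~(2) with $i<n$. The paper handles all of case~(2) uniformly: writing $\bar\varphi(\vec\alpha)=\varphi(\alpha_1,\ldots,\alpha_{n-1},\alpha_n')$ and $\bar\varphi(\vec\beta)=\varphi(\beta_1,\ldots,\beta_{n-1},\beta_n')$, it observes (via Proposition~\ref{prop_remove_limits}, since the removed fixed points occupy only limit positions of $C$ and the further excised set $\{\alpha_1,\ldots,\alpha_{n-1}\}$ is finite) that $\alpha_n'<\alpha_n+\omega$. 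From $\alpha_n<\bar\varphi(\vec\beta)$ and the additive principality of $\bar\varphi(\vec\beta)$ one gets $\alpha_n'<\bar\varphi(\vec\beta)$, and then a single appeal to Proposition~\ref{prop_veblen_equiv} with $k=i$ finishes. Your route---verifying that $\bar\varphi(\vec\beta)$ is a fixed point of $\zeta\mapsto\varphi(\alpha_1,\ldots,\alpha_{n-1},\zeta)$ and then computing $\ot\bigl(C^-(\alpha_1,\ldots,\alpha_{n-1})\cap[0,\bar\varphi(\vec\beta))\bigr)=\bar\varphi(\vec\beta)$---is also valid, but your stated ``main obstacle'' is not a real obstacle: that order-type computation follows directly from the same Proposition~\ref{prop_remove_limits} principle (removing a subset of the limit-position elements, together with finitely many others, from a well-order of limit order type $\lambda$ leaves a set of order type $\lambda$), with no need for any iteration up the Veblen hierarchy. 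The paper's approach buys a cleaner, case-free argument because the purely combinatorial bound $\alpha_n'<\alpha_n+\omega$ makes no reference to $\vec\beta$ and sidesteps the fixed-point verification entirely.
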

\begin{proof}
  Observe that $C(\alpha_1,\ldots,\alpha_{n-2},\alpha_{n-1}+1)$ is precisely the set of all fixed points of the function $x\mapsto \varphi(\alpha_1,\ldots,\alpha_{n-1},x)$. The value $\bar \varphi (\alpha_1,\ldots,\alpha_{n-1},\alpha_n)$ is $\alpha_n$-th element of $C^{-}(\alpha_1,\ldots,\alpha_{n-2},\alpha_{n-1})$, i.e. it is $\alpha_n'$-th element of $C(\alpha_1,\ldots,\alpha_{n-2},\alpha_{n-1})$, for certain $\alpha_n'\ge\alpha_n$, $\alpha_n'\not\in C(\alpha_1,\ldots,\alpha_{n-2},\alpha_{n-1}+1)$. Therefore \begin{equation}\label{Veblen_comp_1}\bar \varphi (\alpha_1,\ldots,\alpha_{n-1},\alpha_n)=\varphi (\alpha_1,\ldots,\alpha_{n-1},\alpha_n')>\alpha_n'\ge \alpha_n.\end{equation}

  It is fairly easy to see that \begin{equation}\label{Veblen_comp_2}\varphi(\alpha_1,\ldots,\alpha_{n-1},\alpha_n)\ge \{\alpha_1,\ldots,\alpha_{n}\}\end{equation} and hence  $$\min C(\alpha_1,\ldots,\alpha_{n-2},\alpha_{n-1})\ge \max \{\alpha_1,\ldots,\alpha_{n-1}\}.$$
  Therefore
  $$\min C^{-}(\alpha_1,\ldots,\alpha_{n-2},\alpha_{n-1})> \max \{\alpha_1,\ldots,\alpha_{n-1}\}$$
  and hence 
  \begin{equation}\label{Veblen_comp_3}\bar \varphi (\alpha_1,\ldots,\alpha_{n-1},\alpha_n)>\alpha_i\text{, for $1\le i<n$}.\end{equation}

  Combining (\ref{Veblen_comp_1}) and (\ref{Veblen_comp_3}) we see that
    \begin{equation}\label{Veblen_comp_4}\bar \varphi (\alpha_1,\ldots,\alpha_{n-1},\alpha_n)>\alpha_i\text{, for $1\le i\le n$}.\end{equation}
   Therefore if $\bar{\varphi}(\alpha_1,\ldots,\alpha_n)\le \beta_i$, then $\bar{\varphi}(\alpha_1,\ldots,\alpha_n)< \bar{\varphi}(\beta_1,\ldots,\beta_n)$.

  Suppose we have $\alpha_1=\beta_1,\ldots,\alpha_{i-1}=\beta_{i-1}$, $\alpha_i<\beta_i$, and $\alpha_{i+1},\ldots,\alpha_n<\bar\varphi(\beta_1,\ldots,\beta_n)$. We claim that $\bar\varphi(\alpha_1,\ldots,\alpha_n)<\bar\varphi(\beta_1,\ldots,\beta_n)$. By definition $\bar\varphi(\alpha_1,\ldots,\allowbreak \alpha_n)=\varphi(\alpha_1,\ldots,\alpha_{n-1},\alpha_n')$ and $\bar\varphi(\beta_1,\ldots,\beta_n)=\varphi(\beta_1,\ldots,\beta_{n-1},\beta_n')$, for some $\beta_n'$. Proposition~\ref{prop_remove_limits} implies that $\alpha_n'<\alpha_n+\omega$ and $\beta_n'<\beta_n+\omega$. Hence $\alpha_n'<\varphi(\beta_1,\ldots,\beta_{n-1},\beta_n')$. Therefore by the the definition of $\varphi$ we will have $\varphi(\alpha_1,\ldots,\allowbreak\alpha_{n-1},\alpha_n')<\varphi(\beta_1,\ldots,\beta_{n-1},\beta_n')$, which concludes the proof of the claim.

  We have proven that if either condition (1) or condition (2) holds, then $\bar{\varphi}(\alpha_1,\ldots,\allowbreak\alpha_n)<\bar{\varphi}(\beta_1,\ldots,\beta_n)$. Now suppose that neither (1) nor (2) holds. If $(\alpha_1,\ldots,\alpha_n)=(\beta_1,\ldots,\beta_n)$ then trivially $\bar{\varphi}(\alpha_1,\ldots,\alpha_n)=\bar{\varphi}(\beta_1,\ldots,\beta_n)$, and the claim follows. Hence, assume $\alpha_i\neq\beta_i$ for some $i$. Let $i$ be the minimal such index. If $\alpha_j\ge\bar\varphi(\beta_1,\ldots,\beta_n)$ for some $j$, then (\ref{Veblen_comp_4}) implies $\bar{\varphi}(\alpha_1,\ldots,\alpha_n)>\bar{\varphi}(\beta_1,\ldots,\beta_n)$, so the claim follows as well. Hence, suppose $\alpha_j < \bar\varphi(\beta_1,\ldots,\beta_n)$ for every $j$. Since (2) does not hold, we must have $\beta_i<\alpha_i$. Furthermore, since (1) does not hold, we must have $\beta_j<\bar\varphi(\alpha_1,\ldots,\alpha_n)$ for every $j$, in particular for $j>i$. Hence, (2) holds with $\alpha_1,\ldots,\alpha_n$ and $\beta_1,\ldots,\beta_n$ switched. As we have shown, this implies $\bar{\varphi}(\alpha_1,\ldots,\alpha_n)>\bar{\varphi}(\beta_1,\ldots,\beta_n)$.
\end{proof}

Continuing the proof of Lemma \ref{lem_phi-bar-star}, let $A$ be the well-ordering consisting of all closed terms built from the constant $0$ and the $n$-ary function $\bar{\varphi}$, where elements of $A$ are compared according to their ordinal values. Let $\Lambda$ be the order type of $A$. Let $\bar{\varphi}^\star\colon \Lambda^n\to\Lambda\setminus \{0\}$ be the following function. Given ordinals $\alpha_1,\ldots,\alpha_n<\Lambda$ such that terms $t_1,\ldots,t_n\in A$ lie in the positions $\alpha_1,\ldots,\alpha_n$ respectively, the value $\bar\varphi^\star(\alpha_1,\ldots,\alpha_n)$ is the position of the term $\bar\varphi(t_1,\ldots,t_n)$. Notice that immediately from  Lemma \ref{Veblen_comp} it follows that $\bar{\varphi}^{\star}$ is a strictly monotone bijection between $\Lambda^n$ and $\Lambda\setminus \{0\}$, for which the analogue of Lemma \ref{Veblen_comp} holds:
\begin{proposition}\label{Veblen_star_comp}
For any ordinals $\alpha_1,\ldots,\alpha_n,\beta_1,\ldots,\beta_n$ we have $\bar{\varphi}^\star(\alpha_1,\ldots,\alpha_n)<\bar{\varphi}^\star(\beta_1,\ldots,\beta_n)$ iff at least one of the following conditions holds:
\begin{enumerate}
    \item for some $1\le i\le n$ we have $\bar{\varphi}^\star(\alpha_1,\ldots,\alpha_n)\le \beta_i$;
    \item  for some $1\le i\le n$ we have $\alpha_1=\beta_1$, $\ldots$, $\alpha_{i-1}=\beta_{i-1}$, $\alpha_{i}<\beta_i$, and $\alpha_{i+1},\ldots,\alpha_n<\bar{\varphi}^{\star}(\beta_1,\ldots,\beta_n)$.
\end{enumerate}
\end{proposition}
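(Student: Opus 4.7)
The plan is to derive the proposition directly from Lemma~\ref{Veblen_comp} by transferring it through the order-isomorphism that is implicit in the definition of $\bar\varphi^\star$. The main observation is that positions in $A$ and ordinal values of terms in $A$ are coupled by an order-isomorphism, so the comparison criterion for $\bar\varphi$ on ordinal values automatically yields a comparison criterion for $\bar\varphi^\star$ on positions.

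First I would introduce two auxiliary maps on the term set $A$. Let $v\colon A\to\mathsf{On}$ be the ordinal-value function, defined recursively by $v(0)=0$ and $v(\bar\varphi(t_1,\ldots,t_n))=\bar\varphi(v(t_1),\ldots,v(t_n))$, and let $\pi\colon A\to\Lambda$ send each term to its position in the well-ordering of $A$. Because $A$ is ordered by ordinal value, $\pi$ is an order-isomorphism: for all $t,u\in A$ and each $R\in\{<,=,\le\}$ one has $v(t)\mathbin{R}v(u)$ iff $\pi(t)\mathbin{R}\pi(u)$. Moreover, by the very definition of $\bar\varphi^\star$, $\bar\varphi^\star(\pi(t_1),\ldots,\pi(t_n))=\pi(\bar\varphi(t_1,\ldots,t_n))$.

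Given ordinals $\alpha_i,\beta_i\in\Lambda$, I would set $t_i=\pi^{-1}(\alpha_i)$ and $u_i=\pi^{-1}(\beta_i)$, and rewrite the inequality $\bar\varphi^\star(\alpha_1,\ldots,\alpha_n)<\bar\varphi^\star(\beta_1,\ldots,\beta_n)$, via $\pi$, as the ordinal inequality $\bar\varphi(v(t_1),\ldots,v(t_n))<\bar\varphi(v(u_1),\ldots,v(u_n))$. Lemma~\ref{Veblen_comp} characterises the latter in terms of its two clauses applied to the ordinal arguments $v(t_i)$ and $v(u_i)$. To conclude, I would pull those clauses back through $\pi$: the clause $\bar\varphi(v(t_1),\ldots,v(t_n))\le v(u_i)$ becomes clause (1) of the proposition, and the clause asserting $v(t_j)=v(u_j)$ for $j<i$, $v(t_i)<v(u_i)$, and $v(t_j)<\bar\varphi(v(u_1),\ldots,v(u_n))$ for $j>i$ becomes clause (2). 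No step presents a genuine obstacle; the only point to watch is that each term $t$ carries three coupled ordinal data, namely $t$ itself inside $A$, its value $v(t)$, and its position $\pi(t)$, and that $\pi$ preserves order between the latter two.
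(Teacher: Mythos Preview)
Your proposal is correct and is exactly the argument the paper has in mind: the paper does not give a separate proof of Proposition~\ref{Veblen_star_comp} but simply states that it ``follows immediately from Lemma~\ref{Veblen_comp}'' via the definition of $\bar\varphi^\star$. Your write-up makes explicit the order-isomorphism $\pi\colon A\to\Lambda$ and the value map $v$ that carry the clauses of Lemma~\ref{Veblen_comp} over to those of Proposition~\ref{Veblen_star_comp}; this is precisely the intended transfer.
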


Since $\bar{\varphi}^{\star}$ is a bijection between $\Lambda^n$ and $\Lambda\setminus \{0\}$, Proposition \ref{Veblen_star_comp} implies the following (compare with Proposition \ref{prop_veblen_equiv}):
\begin{lemma}\label{Veblen_star_form1}
    For every $\alpha_1,\ldots,\alpha_n$, the value $\bar{\varphi}^\star(\alpha_1,\ldots,\alpha_n)$ is the least ordinal $\gamma$ strictly above $\alpha_1,\ldots,\alpha_n$ such that for any $1\le i\le n$, $\beta_i<\alpha_i$ and $\beta_{i+1},\ldots,\beta_{n}<\gamma$:
    $$\gamma>\bar\varphi^\star(\alpha_1,\ldots,\alpha_{i-1},\beta_i,\beta_{i+1},\ldots,\beta_n).$$
\end{lemma}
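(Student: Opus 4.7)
The plan is to prove Lemma~\ref{Veblen_star_form1} by setting $\gamma^\ast := \bar\varphi^\star(\alpha_1,\ldots,\alpha_n)$, verifying directly that $\gamma^\ast$ satisfies the two stated requirements, and then ruling out any smaller candidate. The only tools needed are the comparison criterion of Proposition~\ref{Veblen_star_comp} together with the fact that $\bar\varphi^\star\colon \Lambda^n\to\Lambda\setminus\{0\}$ is a bijection.

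For the two positive requirements, I would first observe that $\gamma^\ast > \alpha_j$ for every $j$: otherwise, applying clause~(1) of Proposition~\ref{Veblen_star_comp} with $\beta_k=\alpha_k$ for all $k$ would give the absurdity $\bar\varphi^\star(\alpha_1,\ldots,\alpha_n)<\bar\varphi^\star(\alpha_1,\ldots,\alpha_n)$. The second requirement is then immediate from clause~(2) of the same proposition: for any admissible choice of $i$, $\beta_i<\alpha_i$, and $\beta_{i+1},\ldots,\beta_n<\gamma^\ast$, the tuple $(\alpha_1,\ldots,\alpha_{i-1},\beta_i,\beta_{i+1},\ldots,\beta_n)$ lies below $(\alpha_1,\ldots,\alpha_n)$ exactly in the sense of clause~(2), so $\bar\varphi^\star(\alpha_1,\ldots,\alpha_{i-1},\beta_i,\ldots,\beta_n)<\gamma^\ast$.

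For minimality, suppose toward contradiction that $\gamma<\gamma^\ast$ satisfies both requirements. Since $\gamma>\alpha_j\ge 0$ for every $j$, we have $\gamma>0$, so by surjectivity of $\bar\varphi^\star$ there exists $(\delta_1,\ldots,\delta_n)\in\Lambda^n$ with $\bar\varphi^\star(\delta_1,\ldots,\delta_n)=\gamma$. The same clause~(1) trick used above, applied to $\gamma$ in place of $\gamma^\ast$, yields $\delta_j<\gamma$ for every $j$. Now apply Proposition~\ref{Veblen_star_comp} to the strict inequality $\bar\varphi^\star(\delta_1,\ldots,\delta_n)<\bar\varphi^\star(\alpha_1,\ldots,\alpha_n)$: clause~(1) is impossible because $\gamma>\alpha_j$ for all $j$, so clause~(2) must hold, producing some $i$ with $\delta_1=\alpha_1,\ldots,\delta_{i-1}=\alpha_{i-1}$, $\delta_i<\alpha_i$, and $\delta_{i+1},\ldots,\delta_n<\gamma^\ast$. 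Combined with the bounds $\delta_{i+1},\ldots,\delta_n<\gamma$ already obtained, the hypotheses of the assumed property of $\gamma$ are met at this $i$ with $\beta_k:=\delta_k$ for $k\ge i$, yielding $\gamma>\bar\varphi^\star(\alpha_1,\ldots,\alpha_{i-1},\delta_i,\delta_{i+1},\ldots,\delta_n)=\bar\varphi^\star(\delta_1,\ldots,\delta_n)=\gamma$, the desired contradiction.

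The only subtlety, such as it is, lies in noticing that the bounds $\delta_j<\gamma$ needed to invoke the assumed property of $\gamma$ come for free from the same clause~(1) observation used in the existence half; once that is spotted, the whole proof is a mechanical unpacking of Proposition~\ref{Veblen_star_comp}.
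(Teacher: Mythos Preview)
Your proof is correct and follows exactly the approach the paper indicates: the paper simply states that the lemma is an immediate consequence of Proposition~\ref{Veblen_star_comp} together with the fact that $\bar\varphi^\star$ is a bijection from $\Lambda^n$ onto $\Lambda\setminus\{0\}$, and your argument is precisely the routine unpacking of that claim.
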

The definition of $\bar{\varphi}^\star$ immediately implies $\bar{\varphi}^\star(\alpha_1,\ldots,\alpha_n)\le \bar{\varphi}(\alpha_1,\ldots,\alpha_n)$. On the other hand, we have the following lower bounds for $\bar{\varphi}^\star$ in terms of $\varphi$:

\begin{lemma}\label{lem_phi-bar-star_lower}
For every $\alpha_1,\ldots,\alpha_n<\Lambda$ we have:
\begin{enumerate}
    \item $\bar{\varphi}^\star(0,\ldots,0,\alpha_n) = \alpha_n+1$.
    
    \item $\bar{\varphi}^\star(0,\ldots,0,\alpha_{n-1},\alpha_n) \ge \alpha_n+\omega^{\alpha_{n-1}}$.
    
    \item If $\alpha_i>0$ for some $i<n-1$ then $\bar{\varphi}^\star(\alpha_1,\ldots,\alpha_n)$ is an $\varepsilon$-number.
    
    \item $\bar{\varphi}^\star(0,\ldots,0,1+\alpha_{n-2},\alpha_{n-1},\alpha_n)\ge \varphi(0,\ldots,0,\alpha_{n-2},\alpha_{n-1},\alpha_n)$.
    
    \item If $\alpha_i>0$ for some $i<n-2$ then $\bar{\varphi}^\star(\alpha_1,\ldots,\alpha_n)\ge \varphi(\alpha_1,\ldots,\alpha_n)$.
\end{enumerate}
\end{lemma}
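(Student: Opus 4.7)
The plan is to prove the five claims by successive transfinite inductions combining the recursive characterization of $\bar\varphi^\star$ from Lemma~\ref{Veblen_star_form1} with the characterization of $\varphi$ from Proposition~\ref{prop_veblen_equiv}. Throughout, let $\gamma$ denote the left-hand side in each clause. Claim (1) follows by a straightforward induction on $\alpha_n$: the only non-vacuous constraint imposed by Lemma~\ref{Veblen_star_form1} is $\gamma>\bar\varphi^\star(0,\ldots,0,\beta_n)=\beta_n+1$ for $\beta_n<\alpha_n$, so the least admissible value of $\gamma$ is $\alpha_n+1$.

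For (2) I would use double induction with $\alpha_{n-1}$ outer and $\alpha_n$ inner. Lemma~\ref{Veblen_star_form1} produces two families of constraints: the outer hypothesis forces $\gamma$ to be closed under $\beta\mapsto\beta+\omega^{\beta_{n-1}}$ for each $\beta_{n-1}<\alpha_{n-1}$, which alone gives $\gamma\ge\omega^{\alpha_{n-1}}$; the inner hypothesis forces $\gamma>\beta_n+\omega^{\alpha_{n-1}}$ for $\beta_n<\alpha_n$. Combining the two and performing a short case split on whether $\alpha_n$ is a successor or a limit (using $1+\omega^{\alpha_{n-1}}=\omega^{\alpha_{n-1}}$ when $\alpha_{n-1}>0$) yields $\gamma\ge\alpha_n+\omega^{\alpha_{n-1}}$. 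For (3), let $i_0$ be the least index with $\alpha_{i_0}>0$; then $i_0<n-1$. Given $\delta<\gamma$, apply Lemma~\ref{Veblen_star_form1} with $k=i_0$, $\beta_{i_0}=0$, $\beta_{n-1}=\delta$, and all other $\beta_j=0$; this yields $\bar\varphi^\star(0,\ldots,0,\delta,0)<\gamma$, which by (2) is at least $\omega^\delta$. Thus $\omega^\delta<\gamma$ for every $\delta<\gamma$, and since $\gamma$ is then a limit, $\omega^\gamma=\gamma$.

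For (4), I induct on $(\alpha_{n-2},\alpha_{n-1},\alpha_n)$ lexicographically. By~(3), $\gamma$ is an $\varepsilon$-number, hence of the form $\omega^\gamma$. It remains to verify the three conditions that Proposition~\ref{prop_veblen_equiv} imposes on $\varphi(\alpha_{n-2},\alpha_{n-1},\alpha_n)$: for each $k\in\{n-2,n-1,n\}$, Lemma~\ref{Veblen_star_form1} (using that $1+\alpha'_{n-2}<1+\alpha_{n-2}$ whenever $\alpha'_{n-2}<\alpha_{n-2}$) places $\bar\varphi^\star$ of the decremented triple strictly below $\gamma$, and the inductive hypothesis for~(4) applied to the lex-smaller triple upgrades $\bar\varphi^\star$ to $\varphi$. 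The minimality clause of Proposition~\ref{prop_veblen_equiv} then gives $\gamma\ge\varphi(\alpha_{n-2},\alpha_{n-1},\alpha_n)$.

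Claim (5) is proved by lex-induction on $(\alpha_1,\ldots,\alpha_n)$ and follows the same template as (4): $\gamma$ is an $\varepsilon$-number by~(3), and each Veblen condition from Proposition~\ref{prop_veblen_equiv} reduces via Lemma~\ref{Veblen_star_form1} to bounding $\bar\varphi^\star$ of a lex-smaller tuple below $\gamma$. The main obstacle, and the only delicate point in the whole proof, is converting such a bound back to one on $\varphi$: the modified tuple may no longer carry a nonzero entry at a position $<n-2$, so the inductive hypothesis for~(5) is unavailable. A short case analysis shows this occurs precisely when $k$ equals the least index $i_0$ with $\alpha_{i_0}>0$, $\alpha'_{i_0}=0$, and $\alpha'_{i_0+1}=\cdots=\alpha'_{n-3}=0$, so the modified tuple has the form $(0,\ldots,0,\alpha'_{n-2},\alpha'_{n-1},\alpha'_n)$. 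In this case I bypass (5) and invoke (4) instead: reapplying Lemma~\ref{Veblen_star_form1} with $\beta_{n-2}$ replaced by $1+\alpha'_{n-2}$ (still below $\gamma$ since $\gamma$ is a limit) gives $\bar\varphi^\star(0,\ldots,0,1+\alpha'_{n-2},\alpha'_{n-1},\alpha'_n)<\gamma$, and (4) bounds this from below by $\varphi(\alpha'_{n-2},\alpha'_{n-1},\alpha'_n)$, which coincides with $\varphi$ of the modified tuple. This closes the induction and completes the proof.
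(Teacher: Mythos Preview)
Your proposal is correct and follows essentially the same approach as the paper: a transfinite induction combining the minimality characterization of $\bar\varphi^\star$ with Proposition~\ref{prop_veblen_equiv}, with the same handling in item~(5) of the degenerate case via item~(4). The only noteworthy difference is in item~(2), where the paper argues by contradiction (locating $\gamma<\alpha_{n-1}$ and $m$ with $\alpha_n+\omega^\gamma m<\beta\le\alpha_n+\omega^\gamma(m+1)$), whereas you argue directly via the closure property; your route works, but note that you actually need the full closure statement (forcing $\gamma$ to be a multiple of $\omega^{\alpha_{n-1}}$), not merely its weak consequence $\gamma\ge\omega^{\alpha_{n-1}}$, to handle the limit case of $\alpha_n$.
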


\begin{proof}
We proceed by transfinite induction on the lexicographic order of $\Lambda^n=\Lambda\times\cdots\times\Lambda$.

For item (1), suppose $\gamma=\bar{\varphi}^\star(\beta_1,\ldots,\beta_n) < \bar{\varphi}^\star(0,\ldots,0,\alpha_n)$. We will show that $\gamma\le \alpha_n$. Proposition \ref{Veblen_star_comp} implies that either $\gamma\le \alpha_n$, or else $\beta_1=\cdots=\beta_{n-1}=0$ and $\beta_n<\alpha_n$. In the latter case, transfinite induction implies $\gamma=\beta_n+1 \le \alpha_n$ again.

For item (2), let $\beta=\bar{\varphi}^\star(0,\ldots,0,\alpha_{n-1},\alpha_n)$, and note that $\beta>\alpha_n$. If $\beta\ge\alpha_n+\omega^{\alpha_{n-1}}$ we are done. Otherwise, there exist $\gamma<\alpha_{n-1}$ and $m\in\bbN$ such that $\alpha_n + \omega^\gamma m < \beta\le \alpha_n + \omega^\gamma (m+1)$. Transfinite induction and Proposition \ref{Veblen_star_comp} imply
\begin{equation*}
    \beta\le \alpha_n+\omega^\gamma(m+1)\le\bar{\varphi}^\star(0,\ldots,0,\gamma,\alpha_n+\omega^\gamma m)<\bar{\varphi}^\star(0,\ldots,0,\alpha_{n-1},\alpha_n)=\beta,
\end{equation*}
contradiction.

For item (3), let $\beta=\bar{\varphi}^\star(\alpha_1,\ldots,\alpha_n)$ where $\alpha_i>0$ for some $i<n-1$. It is enough to show that $\beta>\omega^\gamma$ for every $\gamma<\beta$, since this implies $\beta\ge \omega^\beta$. Let $\gamma<\beta$. Then Proposition \ref{Veblen_star_comp} and item (2) imply $\beta >\bar{\varphi}^\star(0,\ldots,0,\gamma,0)\ge \omega^\gamma$, as desired.

For item (4), let $\beta=\bar{\varphi}^\star(0,\ldots,0,1+\alpha_{n-2},\alpha_{n-1},\alpha_n)$. Transfinite induction and Proposition \ref{Veblen_star_comp} imply the following:
\begin{itemize}
\item For every $\alpha'_n<\alpha_n$ we have $\varphi(0,\ldots,0,\alpha_{n-2},\alpha_{n-1},\alpha'_n) \le \bar{\varphi}^\star(0,\ldots,0,1+\alpha_{n-2},\alpha_{n-1},\alpha'_n)<\beta$.
\item For every $\alpha'_{n-1}<\alpha_{n-1}$ and $\alpha'_n<\beta$ we have $\varphi(0,\ldots,0,\alpha_{n-2},\alpha'_{n-1},\alpha'_n) \le \bar{\varphi}^\star(0,\ldots,0,1+\alpha_{n-2},\alpha'_{n-1},\alpha'_n)<\beta$.
\item For every $\alpha'_{n-2}<\alpha_{n-2}$ and $\alpha'_{n-1},\alpha'_n<\beta$ we have $\varphi(0,\ldots,0,\alpha'_{n-2},\alpha'_{n-1},\allowbreak\alpha'_n) \le \bar{\varphi}^\star(0,\ldots,0,1+\alpha'_{n-2},\alpha'_{n-1},\alpha'_n)<\beta$.
\end{itemize}
Furthermore, by item (3), $\beta$ is of the form $\omega^\gamma$. Hence, Proposition \ref{prop_veblen_equiv} implies that $\varphi(0,\ldots,0,\alpha_{n-2},\alpha_{n-1},\alpha_n)\le \beta$, as desired.

Finally, for item (5), let $\beta=\bar{\varphi}^\star(\alpha_1,\ldots,\alpha_n)$, where $\alpha_i>0$ for some $i<n-2$. Let $1\le j\le n$, and suppose $\alpha'_j<\alpha_j$ and $\alpha'_{j+1},\ldots,\alpha'_n<\beta$. We claim that $\gamma=\varphi(\alpha_1,\ldots,\alpha_{j-1},\alpha'_j,\ldots,\alpha'_n)<\beta$. If the first $n-3$ elements among $\alpha_1,\alpha_{j-1},\alpha'_j,\ldots,\alpha'_n$ are not all $0$, then transfinite induction on item (5), together with Proposition \ref{Veblen_star_comp}, imply that $\gamma\le \bar{\varphi}^\star(\alpha_1,\ldots,\alpha_{j-1},\alpha'_j,\ldots,\alpha'_n)<\beta$. Otherwise, by item (4) and Proposition \ref{Veblen_star_comp} we have
\begin{equation*}
\gamma=\varphi(0,\ldots,0,\alpha'_{n-2},\alpha'_{n-1},\alpha'_n)\le\bar{\varphi}^\star(0,\ldots,0,1+\alpha'_{n-2},\alpha'_{n-1},\alpha'_n)<\beta
\end{equation*}
(because $1+\alpha'_{n-1}<\beta$, since $\beta$ is an $\varepsilon$-number). Hence, Proposition \ref{prop_veblen_equiv} implies that $\varphi(\alpha_1,\ldots,\alpha_n)\le \beta$, as desired.
\end{proof}

Lemma \ref{lem_phi-bar-star_lower} implies that $\Lambda=\varphi(1,\underbrace{0,\ldots,0}\limits_{\mbox{\footnotesize $n$ times}})$, as desired.
\end{proof}

The next step in proving Theorem \ref{thm_cont} is to embed $\Lambda$ into the reals.

\begin{lemma}\label{Lambda_emb} There exists an order preserving embedding $e$ of $\Lambda$ into $\mathbb{R}$ such that $\sup\limits_{\alpha<\lambda}e(\alpha)<e(\lambda)$, for any limit ordinal $\lambda<\Lambda$.\end{lemma}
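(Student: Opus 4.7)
The plan is to exploit countability of $\Lambda$. First, observe that $\Lambda$ is countable: it is defined as the order type of the set $A$ of closed terms built from the constant $0$ and the single $n$-ary function symbol $\bar{\varphi}$, and that term algebra is countable. Fix, once and for all, an arbitrary bijection $k \mapsto \beta_k$ between $\mathbb{N}$ and $\Lambda$; the enumeration need not respect the ordinal ordering.

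Next, define a weight function $w\colon \Lambda \to \mathbb{R}^{>0}$ by $w(\beta_k) = 2^{-k}$, and set
\[
e(\alpha) \;=\; \sum_{\beta \le \alpha} w(\beta).
\]
Since each such sum is bounded by $\sum_{k\in\mathbb{N}} 2^{-k} = 2$, the value $e(\alpha)$ is a well-defined real number in $[0,2]$.

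The two required properties will both be immediate from the definition. For strict monotonicity, if $\alpha < \alpha'$ then $\{\beta : \beta \le \alpha\} \subsetneq \{\beta : \beta \le \alpha'\}$, the latter containing $\alpha'$ itself, so $e(\alpha') \ge e(\alpha) + w(\alpha') > e(\alpha)$. For the jump at a limit ordinal $\lambda < \Lambda$, as $\alpha$ ranges over ordinals $< \lambda$ the sets $\{\beta : \beta \le \alpha\}$ exhaust $\{\beta : \beta < \lambda\}$, and monotone convergence of positive series gives
\[
\sup_{\alpha < \lambda} e(\alpha) \;=\; \sum_{\beta < \lambda} w(\beta) \;=\; e(\lambda) - w(\lambda) \;<\; e(\lambda).
\]

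I do not anticipate any real obstacle. The construction is the standard trick of distributing summable positive mass along a countable linear order to embed it in $\mathbb{R}$; the only wrinkle specific to the lemma is that each point $\alpha$ must contribute a mass that is present in $e(\alpha)$ but absent from $\sup_{\beta<\alpha} e(\beta)$, and this is automatic since $w(\alpha)>0$ enters the partial sums only from $\alpha$ onward. In particular the gap condition then holds at every element, limit or not, which is strictly stronger than what is required.
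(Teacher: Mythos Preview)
Your argument is correct. The summable-mass construction $e(\alpha)=\sum_{\beta\le\alpha}w(\beta)$ is standard, and your verification of both monotonicity and the gap at limits is sound; indeed $e(\lambda)-\sup_{\alpha<\lambda}e(\alpha)\ge w(\lambda)>0$ directly, without even appealing to monotone convergence.

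The paper proceeds differently. It first remarks that one can embed any countable ordinal into $\bbR$ and then invoke Proposition~\ref{prop_remove_limits} to discard limit positions, and then, ``for completeness'', it gives an explicit recursive construction: enumerate $\Lambda$ as $\alpha_0,\alpha_1,\ldots$ with $\alpha_0=0$, set $e(\alpha_0)=0$, and for each new $\alpha_{i+1}$ place $e(\alpha_{i+1})=e(\alpha_k)+3^{-k}+3^{-i}$ where $\alpha_k$ is its current left neighbour; an inductive invariant then guarantees that the $3^{-i}$-ball around $e(\alpha_i)$ contains no other image point. Your approach is more economical: it avoids the auxiliary induction entirely and yields the gap property uniformly at every point as a one-line consequence of positivity of $w$. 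The paper's explicit construction, on the other hand, produces a concrete lower bound $3^{-i}$ on the isolation radius of each point in terms of its enumeration index, which is slightly more information but is not needed for the lemma as stated.
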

\begin{proof}
It is easily shown by ordinal induction that any countable ordinal has an order-preserving embedding into $\bbR$, and Proposition \ref{prop_remove_limits} allows us to remove any limit elements.

For the sake of completeness, we also offer an explicit embedding (which also works for any countable ordinal). Fix an enumeration $\alpha_0,\alpha_1,\ldots$ (indexed by natural numbers) of elements of $\Lambda$ such that $\alpha_0=0$. We define values $e(\alpha_i)$ by induction on $i$. We put $e(\alpha_0)=0$. To define $e(\alpha_{i+1})$ we find $0\le k\le i$ such that $\alpha_{k}=\max\{\alpha_j\mid j\le i\text{ and }\alpha_j<\alpha_{i+1}\}$ and next we put $e(\alpha_{i+1})=e(\alpha_k)+3^{-k}+3^{-i}$. 

By induction on $n$ we simultaneously prove the following two claims: 
\begin{enumerate}
\item $e$ is order preserving on the set $\{\alpha_0,\ldots,\alpha_n\}$;
\item for any $i,j\le n$, if $\alpha_i<\alpha_j$ and $\alpha_i,\alpha_j$ are neighbours in $\{\alpha_0,\ldots,\alpha_{n}\}$, then $e(\alpha_j)-e(\alpha_i)\ge 3^{-i}+2\cdot 3^{-n}+3^{-j}$. 
\end{enumerate}
The base holds for trivial reasons. Let us prove the induction step for $n+1$. If $\alpha_{n+1}>\max \{\alpha_0,\ldots,\alpha_n\}$, then both the claims follow immediately from the definition of $e$ and the induction hypothesis. Now let $k,l\le n$ be such that $\alpha_k<\alpha_l$ are neighbours in $\{\alpha_0,\ldots,\alpha_n\}$ and $\alpha_{n+1}$ lies in the interval $(\alpha_k,\alpha_l)$. By the definition of $e$ we have $e(\alpha_{n+1})=e(\alpha_k)+3^{-k}+3^{-n}$. Hence we have $$e(\alpha_{n+1})-e(\alpha_k)=3^{-k}+3^{-n}=3^{-k}+2\cdot 3^{-(n+1)}+3^{-(n+1)}\text{ and}$$ $$\begin{aligned}e(\alpha_l)-e(\alpha_{n+1})& =(e(\alpha_{l})-e(\alpha_k))-(e(\alpha_{n+1})-e(\alpha_k))\\ & \ge 3^{-k}+2\cdot 3^{-n}+3^{-l} -(3^{-k}+3^{-n})\\ &= 3^{-n}+3^{-l}\\ &=3^{-(n+1)}+2\cdot 3^{-(n+1)}+3^{-l}\end{aligned}$$ 
Of course this two (in)equalities imply that $e(\alpha_k)<e(\alpha_{n+1})<e(\alpha_l)$ and the whole claim 1. Also they give us the claim 2. for neighbouring ordinals $\alpha_k,\alpha_{n+1}$ and $\alpha_{n+1},\alpha_l$. Which finishes induction proof, since for all the other neighbouring ordinals the claim 2. follows immediately from induction hypothesis. 

Hence $e$ is order preserving. And for any natural $i$, the $3^{-i}$ neighbourhood of $e(\alpha_i)$ contains no values $e(\beta)$, $\beta<\Lambda$ other than $e(\alpha_i)$ itself. Thus $\sup\limits_{\alpha<\lambda}e(\alpha)<e(\lambda)$, for any limit ordinal $\lambda<\Lambda$.
\end{proof}

The next lemma allows us to extend functions defined on well-ordered sets of reals to continuous functions.

\begin{lemma}\label{Continous_ext}
	Let $ W$ be a well-ordered subset of the real line endowed with its natural order. Assume that $ W$ does not contain any of its limit points. Let $g_0\colon  W^n\to\mathbb{R}$ be a nondecreasing function bounded on precompact sets. Then there exists a continuous nondecreasing function $g:\bbR^n\to\bbR$ such that $g|_{ W^n}=g_0$. 
\end{lemma}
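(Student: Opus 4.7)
The plan is to extend $g_0$ in two stages: first to $\overline{W}^n$ (with the subspace topology of $\bbR^n$), and then to all of $\bbR^n$ by multilinear interpolation on the cells cut out by the coordinate hyperplanes $\{x_i=w\}$ for $w\in\overline{W}$. The structural fact driving everything is a ``one-sidedness'' property of $\overline{W}$: since $W$ is well-ordered and contains no limit point of itself, each $\ell\in L:=\overline{W}\setminus W$ is a supremum $\ell=\sup\{w\in W:w<\ell\}$ and is isolated from above in $\overline{W}$ (otherwise well-ordering of $W$ would yield a strictly decreasing $W$-sequence), while each $w\in W$ is isolated outright in $\overline{W}$. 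I would record this observation up front.

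For Step 1, for $\vec x\in\overline{W}^n$ I would define $g_1(\vec x):=\sup\{g_0(\vec w):\vec w\in W^n,\ \vec w\le\vec x\}$. This is finite by the precompactness hypothesis on $g_0$, nondecreasing by construction, and agrees with $g_0$ on $W^n$ because the supremum is attained. Continuity on $\overline{W}^n$ then follows from the ``one-sidedness'' observation: any convergent sequence $\vec x^{(k)}\to\vec x$ in $\overline{W}^n$ satisfies $\vec x^{(k)}\le\vec x$ coordinatewise for all large $k$, so monotonicity of $g_1$ combined with a standard $\varepsilon$-argument that picks $\vec w\le\vec x$ with $g_0(\vec w)>g_1(\vec x)-\varepsilon$ (and notes $\vec w\le\vec x^{(k)}$ eventually) yields $g_1(\vec x^{(k)})\to g_1(\vec x)$.

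For Step 2, I would first clip each coordinate into $[\min W,\sup W]$: set $\tilde x_i:=\min W$ if $x_i<\min W$, $\tilde x_i:=\sup W$ if $\sup W<\infty$ and $x_i>\sup W$, and $\tilde x_i:=x_i$ otherwise. Then for each $\tilde x_i$ let $a_i:=\sup(\overline{W}\cap(-\infty,\tilde x_i])$ and $b_i:=\inf(\overline{W}\cap[\tilde x_i,+\infty))$; both lie in $\overline{W}$ and are equal precisely when $\tilde x_i\in\overline{W}$. Finally, define $g(\vec x)$ as the standard multilinear interpolation of $g_1$ at the $2^n$ vertices $\prod_i\{a_i,b_i\}$. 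Agreement with $g_0$ on $W^n$ is immediate because all cells degenerate to points; monotonicity follows because $\tilde{}$ is nondecreasing in $\vec x$, multilinear interpolation of a monotone lattice of values is monotone, and across cell boundaries the two candidate interpolations agree; continuity is obvious inside each open cell and across shared boundary faces.

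The main obstacle is the final continuity check at points $\vec x\in\overline{W}^n$ with some coordinate in $L$. Here, as $\vec x^{(k)}\to\vec x$, the interpolation cells containing $\vec x^{(k)}$ shrink to $\vec x$: in coordinates with $x_i\in L$, both $a_i^{(k)}$ and $b_i^{(k)}$ tend to $x_i$ (using that $L$-points are accumulated from below by $W$), while in coordinates with $x_i\in W$ the isolation of $x_i$ forces $a_i^{(k)},b_i^{(k)}\in\{x_i^-,x_i,x_i^+\}$ with the weight on $x_i^\pm$ tending to $0$. By the continuity of $g_1$ established in Step 1, all $2^n$ corner values converge to $g_1(\vec x)$, and since $g(\vec x^{(k)})$ is a convex combination of these corner values, it converges to $g_1(\vec x)=g(\vec x)$, completing the proof.
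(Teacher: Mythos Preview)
Your approach is essentially the paper's: extend $g_0$ to $\overline{W}^n$ by suprema, then fill in $\bbR^n$ by multilinear interpolation on the cells cut out by $\overline{W}$. The paper pads $W$ so that every unit interval meets it (to keep cells bounded) and proves continuity via a uniform modulus-of-continuity argument; you instead clip coordinates to $[\min W,\sup W]$ and argue pointwise. Both work.

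There is, however, a small slip in your final continuity check. For a coordinate $x_i\in L$ you assert that both $a_i^{(k)}$ and $b_i^{(k)}$ tend to $x_i$. That is only true for approach from below. If $x_i^{(k)}>x_i$, then (since $x_i$ is isolated from above in $\overline W$) one has $a_i^{(k)}=x_i$ but $b_i^{(k)}=c_i$, the $\overline W$-successor of $x_i$, which does not tend to $x_i$. Consequently your closing sentence ``all $2^n$ corner values converge to $g_1(\vec x)$'' is not correct as stated; indeed it already contradicts what you just said about $W$-coordinates. The fix is exactly the mechanism you invoke for $W$-coordinates: when $x_i\in L$ is approached from above, the interpolation weight on the $b_i^{(k)}=c_i$ side is $(x_i^{(k)}-x_i)/(c_i-x_i)\to 0$, and the corner values stay bounded by the precompactness hypothesis. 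So the correct summary is: along a subsequence with fixed sign of $x_i^{(k)}-x_i$ in each coordinate, every corner either converges to $\vec x$ or carries weight tending to $0$; together with boundedness of $g_1$ near $\vec x$ this gives $g(\vec x^{(k)})\to g_1(\vec x)$. With that correction your argument is complete.
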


\begin{proof}
	It is convenient to prove a slightly more general statement: Namely, let us make the weaker assumption that $ W$ is well-ordered  in restriction to every half-line of the form $[a, +\infty)$.
	Without losing generality, we may now assume that $ W$ contains a point in every unit interval: Indeed, if the given set
	$ W'$  does not satisfy this property, then we can let 
	$$
	 W= W' \cup \{n\in \mathbb Z: [n-1/2, n+1/2]\cap  W'=\emptyset\}. 
	$$
	If $ W'$ is well-ordered  in restriction to every half-line of the form $[a, +\infty)$, then so is $ W$.
	The function $g_0$ is extended to the additional points by setting, for $\vec m \in W^n\setminus (W')^n$, the value $g_0(\vec m)$  to coincide with the value of $g_0$ at the smallest element of $ (W')^n$ exceeding $\vec m$ coordinate-wise; if such an element does not exist, then the function $g_0$ must be bounded, and we set $g_0(\vec m)$ to be the supremum of its values.
	
	Let $\overline{ W}$ be the closure of $ W$. The set $\overline{ W}$ is countable. We extend the function $g_0$ onto $\overline{ W}^n$ by continuity: for $q\in\overline{ W}^n$ let $q_r\in W^n$ be an increasing sequence converging to $q$ and set $g(q)=\lim_{r\to\infty} g_0(q_r)$. We start by observing that the monotonicity of $g_0$ and the absence of limit points of $ W$ in $ W$ together imply that the resulting extension to $\overline{ W}^n$ is continuous.  Indeed, let $q\in \overline{ W}^n$ be a limit point. One directly checks that a sufficiently small neighbourhood of $q$ only contains points from ${ W}^n$ that are  comparable to $q$ and in fact smaller than $q$. Choosing an arbitrary $\varepsilon>0$ and a point $q_0\in  W^n$
	such that $g(q)<g_0(q_0)+\varepsilon$, we note that in a sufficiently small neighbourhood of $q$ all points from ${ W}^n$ will be strictly greater than $q_0$, and the desired continuity follows.

	Next, let
	$p=(p_1,\dots, p_n)\in\mathbb{R}^n\setminus \overline{ W}^n$. Write
	\begin{align*}
		p_0(i)&=\sup\{t\in W: t\le p_i\},\\
		p_1(i)&=\inf\{t\in W: t\ge p_i\},	\quad i=1,\dots, n.
	\end{align*}
	Then $p_i=\alpha_0(i)p_0(i)+\alpha_1(i)p_1(i)$, where $\alpha_0(i),\alpha_1(i)\in [0,1]$, $\alpha_0(i)+\alpha_1(i)=1$, $i=1,\dots, n$. (If $p_i\in \overline{W}$ then $p_0(i) = p_1(i) = p_i$, so $\alpha_0(i), \alpha_1(i)$ are not uniquely defined. In that case we can let, say, $\alpha_0(i)=\alpha_1(i)=1/2$.)
	For each $\vec\epsilon=(\epsilon_1,\dots,\epsilon_n)\in\{0,1\}^n$, set
	\begin{equation*}
		\alpha_{\vec{\epsilon}}=\alpha_{\epsilon_1}(1)\cdots\alpha_{\epsilon_n}(n),\quad
		p_{\vec{\epsilon}}=(p_{\epsilon_1}(1),\dots,p_{\epsilon_n}(n)).
	\end{equation*}
	Then we have $\sum_{\vec{\epsilon}\in\{0,1\}^n} \alpha_{\vec{\epsilon}} = 1$, and
	\begin{equation*}
		p=\sum_{\vec{\epsilon}\in \{0,1\}^n}\alpha_{\vec{\epsilon}}p_{\vec{\epsilon}}.
	\end{equation*}
	Set
	\begin{equation*}
		g(p)=\sum_{\vec{\epsilon}\in \{0,1\}^n}\alpha_{\vec{\epsilon}}g_0(p_{\vec{\epsilon}}).
	\end{equation*}	
	Since $g|_{{\overline W}^n} = g_0$, the desired extension is constructed.
	
	We now need to check the continuity of the function $g$.
	
	Recall that a uniformly continuous function $f$ on a metric space $(E,d)$ has a modulus of continuity $\omega$ if
	\begin{equation*}
		|f(t_1)-f(t_2)|\le \omega(d(t_1,t_2))
	\end{equation*}
	whenever $d(t_1,t_2)\le 1$.
	For our purposes, it is convenient to metrize $\mathbb R^n$ by $d(\vec t(1),\vec t(2))=\max_i |t_i(1)-t_i(2)|$. In this case, a linear function on an axis-parallel box $C$ of diameter not more than~$1$ has modulus of continuity $\omega(t)=At$ where $A$ is the maximum of the difference of values of our function in the vertices of our box.

	We proceed with our argument. Let $B\subset \mathbb R^n$ be a compact set, so the restriction of $g_0$ to $B$ is uniformly continuous. It suffices to check the continuity of $g$ in restriction to $B$. Let $\omega$ be the modulus of continuity of $g_0|_B$. Assume for simplicity that $B$ is an axis-parallel box with vertices in $\overline {W}^n$.
	
	An axis-parallel box $C\subset \mathbb R^n$ will be called {\it admissible} if it is of the form $C=[p_1, q_1]\times\cdots\times [p_n,q_n]$ where, for each $1\le i\le n$, $p_i, q_i$ are adjacent elements of $\overline W$. Informally speaking, our aim now is to extend the modulus of continuity first to the interior of admissible boxes and then to the whole of $B$.
	
	We have the following immediate
	\begin{lemma}
		Let $t(1), t(2)$ be such that their coordinates either coincide or belong to $\overline  W$. Assume 
		that $d(t(1),t(2))\le 1$
		Then $$
		|g(t(1))-g(t(2))|\le \omega(d(t(1),t(2))).
		$$
	\end{lemma}
	
	\begin{proof} Indeed, by definition we have 	
	$$
	t(1)=\sum_{\vec{\epsilon}\in \{0,1\}^n}\alpha_{\vec{\epsilon}}t_{\vec{\epsilon}}(1);\qquad
	t(2)=\sum_{\vec{\epsilon}\in \{0,1\}^n}\alpha_{\vec{\epsilon}}t_{\vec{\epsilon}}(2),
	$$
	where $t_{\vec{\epsilon}(1)}, t_{\vec{\epsilon}(2)} \in \overline  W^n$: Note that the coefficients   
	$\alpha_{\vec{\epsilon}}$ are the same for $t(1), t(2)$ and also that $d(t(1), t(2))=d(t_{\vec{\epsilon}}(1), 
	t_{\vec{\epsilon}}(2))$ for all $\vec{\epsilon}\in \{0,1\}^n$. Since by definition
	$$
	g(t(1))=\sum_{\vec{\epsilon}\in \{0,1\}^n}\alpha_{\vec{\epsilon}}g(t_{\vec{\epsilon}}(1));\qquad
	g(t(2))=\sum_{\vec{\epsilon}\in \{0,1\}^n}\alpha_{\vec{\epsilon}}g(t_{\vec{\epsilon}}(2)),
	$$
	the statement follows. \end{proof}
	
	Next, in restriction to an admissible box of diameter $\rho\leq 1$, our function has modulus of continuity at most $\omega_1(s)=s\omega(\rho)/\rho$, $s\leq \rho$. Hence, let
	\begin{equation}\label{eq_tilde_omega}
		\tilde{\omega}(t)=t\cdot \max_{\rho\in [t,1]}\Bigl(\frac{\omega(\rho)}{\rho}\Bigr).	
	\end{equation}
	We claim that $\tilde{\omega}(t)\to 0$ as $t\to 0$. Indeed, let $\rho(t)$ be the value of $\rho$ that gives the maximum for a given $t$ in (\ref{eq_tilde_omega}), and note that $\rho(t)$ is nonincreasing in $t$. If $\rho(t)\to 0$ as $t\to 0$, then the fact that $\tilde{\omega}(t)\le \omega(\rho(t))$ implies that $\tilde{\omega}(t) \to 0$. Otherwise, let $c>0$ be the limit of $\rho(t)$ as $t\to 0$. Then $\tilde{\omega}(t)\le t\omega(1)/c$, which again tends to zero with $t$.
	
	Hence, $\tilde{\omega}$ can be chosen as the modulus of continuity of our function restricted to each admissible box within $B$. Furthermore, $\tilde{\omega}(t)\ge \omega(t)$. 
	
	Finally, let $ t(1), t(2)\in B$ be two arbitrary points.
	Consider each coordinate $i=1,\dots, n$ in turn. Say that $t_i(1)<t_i(2)$, and consider the closed interval $I_i=[t_i(1),t_i(2)]$. If $I_i$ contains points of $\overline W$ then let $u_i(1), u_i(2)$ be the minimum and maximum elements of $I_i\cap\overline{W}$, respectively. Otherwise, let $u_i(1)=u_i(2)=t_i(1)$. By construction, $u(1)$ belongs to the same admissible box as $t(1)$, and $u(2)$ belongs to the same admissible box as $t(2)$. Furthermore, none of the distances
	$$d(t(1), u(1)), d(u(1), u(2)), d(u(2), t(2))$$ exceed $d(t(1), t(2))$.
	We consequently have $|g(t(1))-g(t(2))|\leq 3\tilde\omega(d(t(1), t(2)))$, and the desired continuity is established.
\end{proof}

Now we finish the proof of Theorem \ref{thm_cont}. Consider $e$ provided by Lemma \ref{Lambda_emb}. Consider $W=e[\Lambda]$. Clealry $W$ contains none of its limit points. Consider the unique $g_0\colon W^n\to W$ such that $g_0(e(\alpha_1),\ldots,e(\alpha_n))=e(\bar\varphi^{\star}(\alpha_1,\ldots,\alpha_n))$. Applying Lemma \ref{Continous_ext} we get a function $g\colon \mathbb{R}^n\to \mathbb{R}$ such that its restriction to $W$ coincides with $g_0$. From construction it is obvious that $\cF(\{g\},\{0\})$ coincides with $W$ and hence has the order type $\varphi(1,\underbrace{0,\ldots,0}\limits_{\mbox{\footnotesize $n$ times}})$.

\begin{remark} As we already pointed out before $\ot(\cF(G,P))\le o(T(P\sqcup G_0,G_1,\ldots,\allowbreak G_n))$.  Schmidt \cite{Sch79,Sch20}  gives certain lower bounds $\Lambda$ for $o(T(W_1,\ldots,W_n))$. For this she provides a family of monotone functions 
$$\langle f_{i,\alpha}\colon \underbrace{\Lambda\times\ldots\times \Lambda}\limits_{\mbox{\scriptsize $i$ times}}\to\Lambda \mid 0\le i\le n \text{ and } \alpha<o(W_i)\rangle,$$
where $f_{i,\alpha}(\vec{\beta})\le f_{i,\alpha'}(\vec{\beta})$ for any number $0\le i\le n$, ordinals $\alpha<\alpha'<o(W_i)$, and $\vec{\beta}\in \underbrace{\Lambda\times\ldots\times \Lambda}\limits_{\mbox{\scriptsize $i$ times}}$. The family additionally has the property that any ordinal $<\Lambda$ is a value of a closed term built from functions from this family. Although we haven't checked the details, it looks like that in fashion of our proof of Theorem \ref{thm_cont} one could start with this functions and then obtain monotone continuous functions on reals. In particular this would be sufficient to construct the following examples. Suppose we are give natural numbers $k_0,\ldots,k_n$ such that for at least one $i\ge 2$ we have $k_i>0$. Then there are sets $F_1,\ldots,F_n$ of continuous monotone functions on reals, where each $F_i$ is a set of $i$-ary functions and $|F_i|=k_i$ such that
$$\mathsf{ot}(\cF_{F_0\cup\ldots\cup F_n})=o(T(F_0,\ldots,F_n)).$$\end{remark}

\section{Generating the closure of $\cF(G,P)$}\label{sec_closure}
In this section we study the closure of the set $\cF(G,P)$ in the case when $P$ is well-ordered and closed and $G$ consists of finitely many linear functions. We use this analysis later in Section \ref{sec_alg}.

Given finite sets $G$ and $P$, the set $\cF(G,P)$ is not always closed. For example, the set $\cF_3 = \cF(\{g_3\},\{0\})$ does not contain $1/2$, since every element of $\cF_3$ is a fraction whose denominator in reduced form is a power of $3$. Nevertheless, $\cF_3$ contains the numbers $a_n = (1-3^{-n})/2$ for $n\in\bbN$, since $g_3(0,0,a_n)=a_{n+1}$, and these numbers converge to $1/2$.

As we will show, for every $n$ we have $\overline{\cF_n}=\cF_{\leq n} = \cF(\{g_1, \ldots, g_n\},\{0\})$, which, together with Proposition \ref{prop_remove_limits}, implies $\ot(\cF_{\leq n})=\varphi_{n-1}(0)$. As we will see, this is due to the fact that for every $m\le n$ the value $g_m(a_1, \ldots, a_m)$ is equal to the unique $x$ that satisfies the equation $x = g_n(a_1, \ldots, a_m, x, \ldots, x)$. We now develop this idea in more generality.

Given a monotone unary linear function $h(x)=ax+b$, $0\le a<1$, let $\mathsf{fix}(h)=b/(1-a)$ be the fixed point of $h$. Note that for every $x<\mathsf{fix}(h)$, the sequence $x,h(x), h(h(x)), h^{(3)}(x), \ldots$ is increasing and converges to $\mathsf{fix}(h)$.

Recall the definition of $\dot g_{u,\vec{p}}(x)$ from Section \ref{Upper_bound_section}.
Let us define the set of functions $\closure(g)$ for a linear function $g$. The set $\closure(g)$ consists of certain functions $g_u\colon \mathbb{R}^{n-|u|}\to \mathbb{R}$, for some subsets $u\subseteq \{1,\ldots,n\}$. Consider some $u\subseteq \{1,\ldots,n\}$. If the unary linear function $\dot g_{u,\vec{x}}$ is of the form $ax+b$, where $a\ge 1$, then we do not define $g_u$ and don't have it in $\closure(g)$. Otherwise we put $g_u(\vec{y})=\mathsf{fix}(\dot g_{u,\vec{y}})$ and add it to $\closure(g)$.

In particular, $g_{\{1,\ldots,n\}}$, if defined, is the constant function that takes no inputs and returns the unique $x$ that satisfies $g(x,\ldots,x)=x$. At the other extreme, $g_\emptyset$ is $g$ itself, hence $g\in\closure(g)$.

For a finite set of linear functions $G$ we put $\closure(G)=\bigcup\limits_{g\in G}\closure(g)$.

\begin{proposition} \label{prop_closure}Suppose $G$ is a finite set of linear functions and $P$ is a well-ordered and closed set of constants such that for any $g\in G$ there is $p_g\in P$ such that $g(p_g,\ldots,p_g)>p_g$. Then $\overline{\cF(G,P)}=\cF(\closure(G),P)$.\end{proposition}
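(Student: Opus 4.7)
The plan is to establish the two inclusions separately.

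For $\cF(\closure(G),P)\subseteq \overline{\cF(G,P)}$, I would induct on the construction of a monotone term over $\closure(G)\cup P$ whose value is the element in question. The base case $P\subseteq\cF(G,P)$ is immediate. For an application of $g\in G$ to arguments $\vec x\in\overline{\cF(G,P)}^n$ with $g(\vec x)>\max\vec x$, I approximate each $x_i$ from below by a sequence in $\cF(G,P)$; continuity and monotonicity of $g$ together with the strict inequality ensure that the monotone condition holds for all tails, so passing to the limit gives $g(\vec x)\in \overline{\cF(G,P)}$. For an application of $g_u\in\closure(G)\setminus G$ to $\vec y\in\overline{\cF(G,P)}^{n-|u|}$ with $g_u(\vec y)>\max\vec y$, the same approximation reduces the problem to $\vec y\in\cF(G,P)^{n-|u|}$; I then iterate $\dot g_{u,\vec y}$ starting from $x_0=\max\vec y$. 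Since the slope of $\dot g_{u,\vec y}$ lies in $[0,1)$ by the definition of $\closure(G)$ and $x_0<\mathsf{fix}(\dot g_{u,\vec y})=g_u(\vec y)$, the iterates stay in $\cF(G,P)$ (each step satisfies the monotone condition for $g$) and converge to $g_u(\vec y)$.

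The reverse inclusion $\overline{\cF(G,P)}\subseteq\cF(\closure(G),P)$ uses transfinite induction on position in the well-ordered set $\overline{\cF(G,P)}$. If $z\in\cF(G,P)$ the claim is trivial, so assume $z$ is a strict limit from below and pick $z_k\in\cF(G,P)$ with $z_k\to z$ strictly increasing. Since $P$ is topologically closed and $z\notin P$, only finitely many $z_k$ lie in $P$, and by finiteness of $G$ I pass to a subsequence with $z_k=g(\vec u^{(k)})$ for a single fixed $g\in G$. Using the infinite Ramsey argument from the proof of Proposition \ref{prop_wo} together with well-orderedness (which forbids strictly decreasing subsequences in $\cF(G,P)$), I further arrange that each coordinate sequence $(u_i^{(k)})_k$ is either constant (call its value $p_i$) or strictly increasing (call its limit $y_i\leq z$). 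Writing $u$ for the set of strictly increasing coordinates, continuity of $g$ gives $z=g(\vec w)$ with $w_i=y_i$ for $i\in u$ and $w_i=p_i$ otherwise; set $u'=\{i\in u:y_i=z\}$. If $u'=\emptyset$, then $w_i<z$ for every $i$ (the case $z=p_i$ being excluded by $z\notin P\subseteq\cF(G,P)$), so the induction hypothesis gives $w_i\in\cF(\closure(G),P)$, and therefore $z=g(\vec w)\in\cF(\closure(G),P)$.

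The main obstacle is the case $u'\neq\emptyset$, where $z$ is itself the fixed point of $\dot g_{u',\vec w'}$ for $\vec w':=(w_i)_{i\notin u'}$. I would show that $S:=\sum_{i\in u'}a_i<1$ (writing $g(\vec x)=\sum_ia_ix_i+c$), so that $g_{u'}\in\closure(G)$; then the induction hypothesis applied to each $y_i$ with $i\in u\setminus u'$ gives $\vec w'\in\cF(\closure(G),P)^{n-|u'|}$, and $z=g_{u'}(\vec w')\in\cF(\closure(G),P)$. To rule out $S\geq 1$, I write $z_k=\sum_{i\in u'}a_iu_i^{(k)}+Q_k$, where $Q_k=\sum_{i\in u\setminus u'}a_iu_i^{(k)}+\sum_{i\notin u}a_ip_i+c$, and exploit two competing constraints on $Q_k$. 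The monotone condition $z_k>\max_iu_i^{(k)}$, combined either with the convex-combination bound $\sum_{i\in u'}a_iu_i^{(k)}\leq\max_{i\in u'}u_i^{(k)}$ when $S=1$ or with a direct estimate from $u_i^{(k)}<z_k<z$ when $S>1$, forces $Q_k$ to lie strictly on one side of its limit $Q$; while $Q_k$ itself is either constant (when $u\setminus u'=\emptyset$ or all the corresponding $a_i$ vanish) or strictly increasing in $k$, which forces it to lie weakly on the opposite side. Each subcase therefore produces a contradiction; the cleanest instance is $S=1$ with $u'=u$, where $Q_k\equiv 0$ and $z_k$ is literally a convex combination of the $u_i^{(k)}$, flatly contradicting $z_k>\max_{i\in u'}u_i^{(k)}$.
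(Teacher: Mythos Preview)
Your approach is essentially the paper's: both inclusions are proved by transfinite induction, with the forward direction handled by approximating arguments from below and iterating $\dot g_{u,\vec y}$ toward its fixed point, and the reverse direction handled by the Ramsey refinement of a convergent sequence so that each coordinate is monotone, followed by identifying the limit as a value of some $g_{u'}$. The only structural differences are cosmetic: you start the fixed-point iteration at $\max\vec y$ (after first reducing to $\vec y\in\cF(G,P)$), whereas the paper starts at the distinguished $p_g$ and interleaves the two approximations; your choice is actually cleaner, since it makes the monotone-term condition for each iterate immediate.

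Your final paragraph, verifying that $S=\sum_{i\in u'}a_i<1$ so that $g_{u'}$ really belongs to $\closure(G)$, fills in a step the paper skips entirely---the paper simply writes $g_u(\vec r)\in\cF(\closure(G),P)$ without checking that $g_u$ is defined. Your argument for this is correct. One small slip: the parenthetical ``the case $z=p_i$ being excluded by $z\notin P$'' should read $z\notin\cF(G,P)$, since the constant coordinates $p_i$ lie in $\cF(G,P)$, not necessarily in $P$.
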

\begin{proof}
  First let us prove that $\cF(\closure(G),P)\subseteq \overline{\cF(G,P)}$. This direction is done by ordinary induction. Let $a\in \cF(\closure(G),P)$. If $a\in P$, then there is nothing to prove. Further assume that $a$ is of the form $f(\vec{p})$, for some $f\in \closure(G)$, $\vec{p}=(p_1,\ldots,p_k)$, $\max(\vec{p})<a$, $\vec{p}\in \cF(\closure(G),P)^k$. By the induction assumption we have $\vec{p}\in \bigl(\overline{\cF(G,P)}\bigr)^k$, so we can choose an infinite coordinate-wise non-decreasing sequence $\vec{s}_{0},\vec{s}_{1},\ldots$ such that each  $\vec{s}_i=(s_{i,1},\ldots,s_{i,k})\in (\cF(G,P))^k$, and such that the limit of the sequence is $\vec{p}$. We now define a sequence $q_{0},q_1,\ldots$ of elements of $\cF(G,P)$ that converges to $a$. If $f = g_\emptyset=g$ for some $g\in G$, then we put $q_i=g(\vec{s}_i)$, for all $i$. Now suppose $f$ is $g_{u}$, for some $n$-ary $g\in G$ and nonempty $u\subseteq \{1,\ldots,n\}$. If $u=\{1,\ldots,n\}$, then we put $q_0=p_g$ and $q_{i+1}=g(q_i,\ldots,q_i)$. Hence, suppose $u\subsetneq \{1,\ldots,n\}$, so $k\ge 1$. Let $p_\mathrm{max} = \max(\vec{p})$. We have $p_\mathrm{max} < \dot g_{u,\vec{p}}(p_\mathrm{max}) < a$. If $\vec{s}_{0}$ is close enough coordinate-wise to $\vec{p}$ (which can be assumed without loss of generality), then we still have $\dot g_{u,\vec{s}_0}(\max(\vec{s}_0))>p_\mathrm{max}$. Hence, in this case we put $q_0=\max(\vec{s}_0)$ and $q_{i+1}=\dot g_{u,\vec{s}_i}(q_{i})$.  In all cases, it is easy to see that $q_i$'s indeed are elements of $\cF(G,P)$ and that $\lim_{i\to \infty} q_i= a$. Hence, $a\in\overline{\cF(G,P)}$, as desired.
  
  Since $\cF(G,P)$ is well-ordered, the set $\overline{\cF(G,P)}$ is also well-ordered. By transfinite induction on $a\in\overline{\cF(G,P)}$ we now prove that $a\in\cF(\closure(G),P)$. If $a$ is not a limit of an infinite increasing sequence of elements of $\cF(G,P)$, then $a\in \cF(G,P)\subseteq  \cF(\closure(G),P)$. So further we assume that $a$ is a limit of an infinite increasing sequence $s_0<s_1<\ldots$ of elements of $\cF(G,P)$. By switching to a subsequence if necessary, we can assume without loss of generality that either (1) all $s_i$ are in $P$, or (2) for some fixed $g\in G$ and all $i$ the numbers $s_i$ are of the form $g(\vec{p}_i)$, where $\vec{p}_i=(p_{i,1},\ldots,p_{i,n})\in (\cF(G,P))^n$. In case (1), $a=\lim_{i\to\infty}s_i\in P$ since $P$ is closed and hence  $a\in\cF(G,P)$. Now consider case (2). By applying the infinite Ramsey's Theorem if necessary, we can assume that for any $1\le j\le n$ the sequence $p_{0,j},p_{1,j},\ldots$ is either strictly increasing, or constant, or strictly decreasing. Since $\cF(G,P)$ is well-ordered, these sequences cannot be strictly decreasing. Let $u$ be the set of all indexes $1\le j\le n$ such that $\lim_{i\to\infty} p_{i,j}=r_j<a$. Let $\vec{r}$ be the vector of all $r_j$'s, for $j\in u$. By transfinite induction assumption $\vec{r}\in \cF(\closure(G),P)^{|u|}$. By continuity of $g$ we have $$a=g(\lim_{i\to \infty}p_{i,1},\ldots,\lim_{i\to \infty}p_{i,n})=\dot g_{u,\vec{r}}(a)=g_u(\vec{r})\in \cF(\closure(G),P).$$
\end{proof}

\section{Computational aspects}\label{sec_alg}

In this section we prove Theorem \ref{thm_alg}. Recall Remark \ref{R_algorithms} on the models of computation over the reals and the rationals. Let $P$ be a finite set of initial elements, let $g:\bbR^n\to\bbR$ be a monotone linear function, and let $\cF=\cF(\{g\},P)$. Table \ref{tab_algorithms} shows four interdependent procedures:
\begin{itemize}
\item $\Succ(r)$ returns the smallest element of $\cF$ larger than $r$.
\item $\BuiltSucc(r, (y_1, \ldots, y_k))$, when given $y_1,\ldots,y_k\le r$, returns the smallest number $s>r$ of the form $s=g(y_1, \ldots, y_k, x_{k+1}, \ldots, x_n)$ for $x_{k+1}, \ldots, x_n\in\cF\cap(-\infty,r]$.
\item $\Pred(r)$ returns the predecessor in $\overline{\cF}$ of a given successor element $r\in\overline{\cF}$. (If $r$ is not a successor element in $\overline{\cF}$ then $\Pred$ runs forever.) Recall that, by Proposition \ref{prop_closure}, we have $\overline{\cF} = \cF(\closure(g),P)$.
\item $\WeakPred(r)$ is similar to $\Pred$, and returns the unique element $x\in\overline{\cF}$ such that $x\le r$ and the successor of $x$ is larger than $r$. (If $r<\min P$ then $\WeakPred$ returns $-\infty$.) Unlike $\Pred$, $\WeakPred$ halts on all inputs.
\end{itemize}
These four procedures call one another recursively (see Figure \ref{fig_algs}).

\begin{table}
    \centering
    \begin{algorithm}[H]
\caption{}
\begin{algorithmic}[5]
\State\LeftComment{Here $g(x_1, \ldots, x_n) = a_1 x_1 + \cdots + a_n x_n + b$}
\Procedure{Succ}{$r$}
\State $m\gets$\Call{BuiltSucc}{$r$,$()$}
\State $m'\gets \min{(P\cap(r,\infty))}$
\State\Return{$\min{\{m,m'\}}$}
\EndProcedure
\Procedure{BuiltSucc}{$r$, $(y_1, \ldots, y_k)$}
\If{$k=n$}
    \State\Return{$g(y_1, \ldots,y_n)$}
\EndIf
\State $x_{k+1}\gets$\Call{WeakPred}{$r$}
\State $m\gets\infty$
\While{$g(y_1,\ldots,y_k,x_{k+1},r,\ldots,r)> r$}
    \State $s\gets$\Call{BuiltSucc}{$r$, $(y_1, \ldots, y_k, x_{k+1})$}
    \State $x'\gets x_{k+1}-(s-r)/a_{k+1}$
    \State $x''\gets$\Call{Succ}{$x'$}
    \State $m'\gets r+a_{k+1}(x''-x')$
    \If{$m'<m$}
        \State $m\gets m'$
    \EndIf
    \State $x_{k+1}\gets$\Call{Pred}{$x''$}
\EndWhile
\State\Return{$m$}
\EndProcedure
\Procedure{Pred}{$r$}
\LeftComment{Let $z_1, z_2, z_3, \ldots$ be an enumeration of the elements of $\overline{\cF}$}
\If{$r\le\min{P}$}
    \State\Return{$-\infty$}
\EndIf
\For{$i=1,2,3,\ldots$}
    \If{$z_i<r$ and \Call{Succ}{$z_i$}${}=r$}
        \State\Return{$z_i$}
    \EndIf
\EndFor
\EndProcedure
\Procedure{WeakPred}{$r$}
\LeftComment{Let $z_1, z_2, z_3, \ldots$ be an enumeration of the elements of $\overline{\cF}$}
\If{$r<\min{P}$}
    \State\Return{$-\infty$}
\EndIf
\For{$i=1,2,3,\ldots$}
    \If{$z_i=r$ or ($z_i<r$ and \Call{Succ}{$z_i$}${}> r$)}
        \State\Return{$z_i$}
    \EndIf
\EndFor
\EndProcedure
\end{algorithmic}
\end{algorithm}
    \caption{Inter-dependent recursive algorithms for computational problems related to $\cF(\{g\},P)$ for a linear function $g$.}
    \label{tab_algorithms}
\end{table}

\begin{figure}
    \centering
    \includegraphics{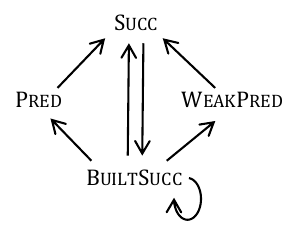}
    \caption{Interdependence between the procedures of Table \ref{tab_algorithms}.}
    \label{fig_algs}
\end{figure}

Procedures $\Pred$ and $\WeakPred$ rely on the fact that the set $\overline{\cF}=\cF(\closure(g),P)$ is computably enumerable: For example, one can list its elements by increasing number of function applications needed to construct them, breaking ties somehow.

We now prove correctness of all four procedures. The following proposition is self-evident:

\begin{proposition}
Procedures $\Succ$ and $\WeakPred$ terminate and return the correct value under the assumption that all the procedure calls made by them terminate and return the correct values. For a successor element $r\in\overline{\cF}$, procedure $\Pred(r)$ terminates and returns the correct value under the assumption that all its procedure calls terminate and return the correct values.
\end{proposition}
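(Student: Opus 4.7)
My plan is to verify each of the three procedures separately, treating every recursive call as a black box returning its correct value, as the proposition's hypothesis permits; only the local control flow of each body then needs checking.

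For $\Succ(r)$: any element $s \in \cF$ with $s > r$ is either in $P$---and the smallest such is $m' = \min(P \cap (r,\infty))$---or it has the form $g(x_1,\ldots,x_n)$ with $x_1,\ldots,x_n \in \cF$ satisfying the monotone-term constraint $s > \max_i x_i$. For the \emph{minimal} such $s > r$, each $x_j$ must satisfy $x_j \le r$: if some $x_j \in (r, s)$, then $x_j \in \cF$ would itself be a smaller element of $\cF$ above $r$, contradicting minimality of $s$. Hence that minimum is precisely what $\BuiltSucc(r, ())$ returns by hypothesis, and $\min\{m, m'\}$ is the successor of $r$ in $\cF$.

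For $\WeakPred(r)$ and $\Pred(r)$: Proposition \ref{prop_closure} identifies $\overline{\cF}$ with $\cF(\closure(g), P)$, which is a computably enumerable subset of $\bbR$, so the promised enumeration $z_1, z_2, \ldots$ exists. The one substantive point is that for every $z \in \overline{\cF}$, the value $\Succ(z)$ (the smallest element of $\cF$ strictly greater than $z$) coincides with the successor of $z$ in $\overline{\cF}$: if some $y \in \overline{\cF}$ satisfied $z < y < \Succ(z)$, then either $y \in \cF$, contradicting minimality of $\Succ(z)$, or $y$ is a left limit point of $\cF$, in which case an increasing sequence of elements of $\cF$ below $y$ converges to $y$, producing $\cF$-elements in $(z, \Succ(z))$---again a contradiction. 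With this identification, the loop predicate of $\WeakPred$ is exactly the characterization of the weak predecessor of $r$ in $\overline{\cF}$, and similarly for $\Pred$. Thus the first index at which the predicate holds yields the correct value, and termination follows because the target element (which exists by the hypothesis on $r$ in the case of $\Pred$) appears at some finite index in the enumeration.

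The whole argument is essentially a specification-check; the only mildly subtle point---and the main thing I would want to state carefully---is the identification of the $\cF$-successor with the $\overline{\cF}$-successor used inside $\WeakPred$ and $\Pred$. Everything else reduces to monotonicity of $g$, the well-order structure of $\cF$ (and hence of $\overline{\cF}$), and the explicit description $\overline{\cF} = \cF(\closure(g), P)$ furnished by Proposition~\ref{prop_closure}.
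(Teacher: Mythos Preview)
Your proof is correct; the paper itself declares this proposition ``self-evident'' and gives no argument, so there is no approach to compare against. The one point you flag---that for $z\in\overline{\cF}$ the $\cF$-successor $\Succ(z)$ coincides with the $\overline{\cF}$-successor---is indeed the only non-trivial check needed, and your justification of it (using that $\cF$ is well ordered so limit points of $\cF$ are left limits) is sound.
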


\begin{lemma}
Procedure $\BuiltSucc$ terminates and returns the correct value under the assumption that all its procedure calls terminate and return the correct values.
\end{lemma}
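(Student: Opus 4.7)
I will split the argument by whether $k=n$ or $k<n$. In the base case $k=n$, the procedure returns $g(y_1,\ldots,y_n)$ in one step and trivially terminates; its value is correct because any depth-$n$ call can only be generated from inside the while loop at level $k=n-1$, whose guard $g(y_1,\ldots,y_{n-1},x_n)>r$ has no $r$-slots and so already certifies $g(y_1,\ldots,y_n)>r$, making $g(y_1,\ldots,y_n)$ the unique, hence smallest, valid candidate.

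\textbf{Termination for $k<n$.} The only potential source of non-termination is the while loop, so I must show it halts after finitely many iterations. Let $X_0>X_1>\cdots$ be the successive values taken by $x_{k+1}$ and let $X_i'$, $X_i''$ denote the corresponding values of $x'$ and $x''$. Since $X_{i+1}=\Pred(X_i'')<X_i''$, it suffices to verify $X_i''\le X_i$, which makes $(X_i)$ a strictly decreasing sequence in the well-ordered set $\overline{\cF}$ and therefore finite. The inequality $X_i''\le X_i$ holds because $X_i'<X_i$ (from $s_i>r$), and either $X_i\in\cF$ (so $X_i$ itself witnesses $\Succ(X_i')\le X_i$), or $X_i\in\overline{\cF}\setminus\cF$ is a limit from below of $\cF$-elements (so $\cF\cap(X_i',X_i)\ne\emptyset$, again forcing $X_i''\le X_i$).

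\textbf{Correctness for $k<n$.} Let $s^\ast$ denote the correct output, i.e.\ the smallest $s>r$ of the form $g(y_1,\ldots,y_k,z_{k+1},\ldots,z_n)$ with each $z_j\in\cF\cap(-\infty,r]$, and $s^\ast=+\infty$ if no such $s$ exists. Substituting $X_i'=X_i-(s_i-r)/a_{k+1}$ into $m_i=r+a_{k+1}(X_i''-X_i')$ produces the master identity
$$m_i=s_i+a_{k+1}(X_i''-X_i),$$
which drives both bounds. For $m\ge s^\ast$: by the correctness assumption on the recursive $\BuiltSucc$ call, $s_i=g(y_1,\ldots,y_k,X_i,z_{k+2}^{(i)},\ldots,z_n^{(i)})$ for some $z_j^{(i)}\in\cF\cap(-\infty,r]$; the identity rewrites $m_i$ as the value of $g$ on the legitimate tuple obtained by replacing $X_i$ with $X_i''\in\cF\cap(-\infty,X_i]$, and this value exceeds $r$ because $X_i''>X_i'$, so each $m_i$ is itself an achievable $s>r$ and hence $\ge s^\ast$; therefore $m=\min_i m_i\ge s^\ast$ (using the convention $\min\emptyset=+\infty$). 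For $m\le s^\ast$: given a realizer $(z_{k+1}^\ast,\ldots,z_n^\ast)$ of $s^\ast$, pick $i$ maximal with $X_i\ge z_{k+1}^\ast$. Such an $i$ exists, because $X_0=\WeakPred(r)\ge z_{k+1}^\ast$ and the loop-exit guard cannot hold at any $x_{k+1}\ge z_{k+1}^\ast$ (otherwise monotonicity of $g$ would contradict $g(y_1,\ldots,y_k,z_{k+1}^\ast,\ldots,z_n^\ast)>r$). Then $z_{k+1}^\ast\in\overline{\cF}\cap(\Pred(X_i''),X_i]$ forces $z_{k+1}^\ast\ge X_i''$, and combining the identity with $s_i\le s^\ast+a_{k+1}(X_i-z_{k+1}^\ast)$ yields $m_i\le s^\ast+a_{k+1}(X_i''-z_{k+1}^\ast)\le s^\ast$.

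\textbf{Main obstacle.} The principal subtlety is that the algorithm iterates $x_{k+1}$ through $\overline{\cF}$ rather than through $\cF$ and plugs possibly non-$\cF$ values into the recursive $\BuiltSucc$ call. This is legitimate because $\BuiltSucc(r,(y_1,\ldots,y_k))$ only demands $y_j\le r$, but one has to check that the ``fictitious'' recursive output $s_i$, once corrected by the jump from $X_i$ down to the nearest $\cF$-element $X_i''$, still describes a genuinely achievable value, and that the coarse iteration $(X_i)$, which silently skips every $X_i''$, is still enough to dominate every true achievable tuple. Once the two inequalities above are arranged as described, the rest reduces to the linear arithmetic packaged in the master identity.
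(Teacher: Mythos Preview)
Your proof is correct and follows the same strategy as the paper's: termination via the strict decrease of $x_{k+1}$ through the well-ordered set $\overline{\cF}$, and correctness via the linearity of $g$ in the $(k+1)$-st coordinate. Your ``master identity'' $m_i=s_i+a_{k+1}(X_i''-X_i)$ makes explicit exactly the linear shift the paper invokes informally when it says ``changing $x_{k+1}$ by an amount $u$ changes the output of $g$ by $a_{k+1}u$,'' and your two-sided bound $m\ge s^\ast$, $m\le s^\ast$ is a cleaner packaging of what the paper phrases as a loop invariant. You also justify explicitly two points the paper only asserts: that $X_i''\le X_i$ (handling separately the case $X_i\in\overline{\cF}\setminus\cF$) and that the iteration over $\overline{\cF}$ rather than $\cF$ does not miss any optimal $z_{k+1}^\ast$, via the interval $(\Pred(X_i''),X_i]$ containing no $\overline{\cF}$-elements below $X_i''$.
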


\begin{proof}
Procedure $\BuiltSucc$ searches for the optimal value of $x_{k+1}$ in decreasing order, by starting with the weak predecessor of $r$, and stopping when $x_{k+1}$ is too small to yield values larger than $r$. At the beginning of the \texttt{while} loop on line 11, the value of $x_{k+1}$ always belongs to $\overline{\cF}$, and $m$ always holds the smallest value of the form $s=g(y_1, \ldots, y_k, x^\star, x_{k+2}, \ldots, x_n)$ with $s>r$ and $x^\star\in\cF\cap(x_{k+1},r]$ and $x_{k+2}, \ldots x_n\in\cF\cap(-\infty,r]$.

The value of $s$ on line 12 equals $g(y_1, \ldots, y_k, x_{k+1}, x_{k+2}, \ldots, x_n)$ for some optimal choice of $\bar x=(x_{k+2}, \ldots, x_n)$. This choice of $\bar x$ is also optimal if we replace $x_{k+1}$ by any $x^\star\in\cF\cap(x', x_{k+1})$ for the value of $x'$ computed on line 13, since changing $x_{k+1}$ by an amount $u$ changes the output of $g$ by $a_{k+1} u$. Hence, the best value to give $x_{k+1}$ under the choice of $\bar x$ is $x''$ as computed on line 14. Note that $x''$ will never be larger than $x_{k+1}$.

The \texttt{while} loop will always terminate after a finite number of iterations, since at the top of the loop $x_{k+1}$ always belongs to $\overline{\cF}$ and decreases from one iteration to the next.
\end{proof}

\begin{lemma}
Procedures $\Succ$, $\WeakPred$, and $\BuiltSucc$ terminate and return the correct value. For a successor element $r\in\overline{\cF}$, procedure $\Pred(r)$ terminates and returns the correct value.
\end{lemma}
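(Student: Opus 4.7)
The plan is to deduce termination; correctness then follows from the preceding lemma. I would prove simultaneous termination of all four procedures by transfinite induction on $\alpha=\mathrm{pos}_{\overline{\cF}}(\omega(r))$, where $\omega(r)\in\overline{\cF}\cup\{-\infty\}$ denotes the mathematical weak predecessor of the input $r$ in $\overline{\cF}$ (taking $\omega(r)=-\infty$ when $r<\min P$, placed at position $0$). For $\Pred(r)$ the relevant primary rank is simply the position of $r\in\overline{\cF}$.

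Fixing $r_0$ with $\omega(r_0)$ at position $\alpha$, I would separate sub-calls into those whose argument has primary rank strictly less than $\alpha$, handled by the induction hypothesis, and those whose argument still has primary rank $\alpha$. The same-rank sub-graph is tightly controlled: $\Succ(r)$ calls only $\BuiltSucc(r,())$; $\BuiltSucc(r,\vec y)$ with $|\vec y|<n$ calls $\WeakPred(r)$ and, through its \texttt{while} loop, the deeper recursive call $\BuiltSucc(r,\vec y\cdot x)$; $\WeakPred(r)$ with $r\notin\overline{\cF}$ issues at most one same-rank sub-call, namely $\Succ(\omega(r))$; and both $\Pred(r)$ and $\WeakPred(r)$ with $r\in\overline{\cF}$ issue no same-rank sub-calls. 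Consequently the rank-$\alpha$ subtree has depth bounded by a constant depending only on $n$. Each node of the subtree has finite branching, because the \texttt{while} loop inside $\BuiltSucc$ runs only finitely often: by the previous lemma, the value of $x_{k+1}$ strictly decreases from iteration to iteration in the well-ordered set $\overline{\cF}$, since $x_{k+1}:=\Pred(x'')$ sits strictly below $x''=\Succ(x')\le x_{k+1}$. The finite subtree at rank $\alpha$ then terminates, closing the induction.

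The main obstacle I anticipate is ensuring that $\Pred(x'')$ is always well-defined, i.e., that $x''\in\cF$ really is a successor in $\overline{\cF}$ rather than a limit from below. I would address this by invoking Proposition~\ref{prop_closure}, which identifies $\overline{\cF}$ with $\cF(\closure(g),P)$: in the linear setting the limit elements of $\overline{\cF}\setminus\cF$ arise as fixed points of the contractions $\dot g_{u,\vec p}$ and are approached by $\cF$ strictly from below, so no element of $\cF$ produced as $\Succ(x')$ can itself be approached by $\overline{\cF}$ from below at the moment of the $\Pred$ call.
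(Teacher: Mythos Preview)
Your transfinite-induction approach is correct and genuinely different from the paper's. The paper argues by contradiction: assuming an infinite call chain $P_1,P_2,\ldots$ with arguments $r_1\ge r_2\ge\cdots$, it observes that $\sigma_i=\min(\overline{\cF}\cap[r_i,\infty))$ must stabilize at some $\sigma^\star$, that after stabilization only $\Succ$ and $\BuiltSucc$ can occur, and then uses the linear form of $g$ to show that the gap $\sigma^\star-r_i$ grows by a factor at least $1+1/\max_j a_j$ at each $\BuiltSucc\!\to\!\Succ$ transition, forcing $r_i\to-\infty$. Your rank $\mathrm{pos}_{\overline{\cF}}(\omega(r))$ plays the same role as $\sigma_i$, but you close the argument structurally (finite same-rank subtree) rather than metrically, so you never touch the coefficients $a_j$. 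Two small points worth tightening: your ``bounded depth plus finite branching'' phrasing hides a subtlety, since checking finite branching of a $\BuiltSucc$ node already presupposes that the loop body's sub-calls terminate; the clean fix is to note that infinite branching means infinitely many \emph{completed} iterations, giving an infinite descending sequence of $x_{k+1}$-values in $\overline{\cF}$, while an infinite path is ruled out by your depth bound together with non-increase of rank. And your $\Pred$ obstacle dissolves without Proposition~\ref{prop_closure}: since $x''=\Succ(x')$ is the least element of $\cF$ above $x'$, the interval $(x',x'')$ contains no point of $\cF$ and hence (as $\overline{\cF}$ is its closure) no point of $\overline{\cF}$, so $x''$ is automatically a successor in $\overline{\cF}$.
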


\begin{proof}
All we need to do is rule out the possibility of an infinite chain of procedure calls. So suppose for a contradiction that $P_1, P_2, P_3, \ldots$ is an infinite chain of procedure calls, where each $P_i$ is one of our four procedures, and each $P_i$ was called with $r=r_i$ and it in turn called $P_{i+1}$. We have $r_1\ge r_2\ge r_3\ge \cdots$. For each $i\in\bbN$ let $\sigma_i=\min(\overline{\cF}\cap[r_i,\infty))$. Hence, $\sigma_1\ge\sigma_2\ge\sigma_3\ge\cdots$. Since $\overline{\cF}$ is well ordered, there exist only finitely many indices $i$ for which $\sigma_i>\sigma_{i+1}$. Let $i^\star$ be the index at which $\sigma_i$ reaches its minimum value $\sigma^\star$. Whenever $P_i$ is $\Pred$ or $\WeakPred$ we have $\sigma_i>\sigma_{i+1}$. Hence, for all $i\ge i^\star$, $P_i$ is either $\Succ$ or $\BuiltSucc$. 

Since $\BuiltSucc$, when given a sequence $(y_1, \ldots, y_k)$ of length $k$, only calls itself on sequences of length $k+1$, we cannot have $P_i=\BuiltSucc$ more than $n$ times in a row. Hence, there exist infinitely many indices $i\ge i^\star$ for which $P_{i}=\BuiltSucc$ and $P_{i+1}=\Succ$, in which case $\BuiltSucc$ called $\Succ$ on line $14$. For these indices $i$, the value of $s$ calculated on line $13$ is at least as large as $\sigma^\star$. Hence, the distance between $r_{i+1}$ and $s^\star$ is larger than the distance between $r_{i}$ and $s^\star$ by a factor of at least $(1+ 1/\max\{a_1, \ldots, a_n\})$. Hence, $\lim_{i\to\infty} r_i=-\infty$. However, when $\BuiltSucc$ is given $r<\min P$, it sets $x_{k+1}$ to $-\infty$ on line $9$, and then the condition of the \texttt{while} loop on line $11$ does not hold, so no call is made to $\Succ$. Contradiction.
\end{proof}

Theorem \ref{thm_alg} now follows, since $r\in\overline{\cF}$ if and only if $\WeakPred(r)=r$. As mentioned, we do not know whether there exists an algorithm that decides whether $r\in\cF$ or not.

\subsubsection*{Acknowledgements} Thanks to the anonymous referee for carefully reading the paper and providing helpful comments.




\bibliographystyle{alpha}
\bibliography{bibliography}

\end{document}